\newtheorem{theorem}{Theorem}
\newtheorem*{theorem*}{Theorem}
\newtheorem{observation}{Observation}
\newtheorem{proposition}{Proposition}
\newtheorem{conjecture}{Conjecture}
\newtheorem{corollary}{Corollary}
\newtheorem{lemma}{Lemma}
\theoremstyle{remark}
\theoremstyle{definition}
\newtheorem{definition}{Definition}
\newcommand{\E}{\mathcal{E}}
\newcommand{\T}{\mathcal{T}}
\newcommand{\C}{\mathcal{C}}
\renewcommand{\P}{\mathcal{P}}
\newcommand{\D}{\mathbb{D}}
\renewcommand{\T}{\mathbb{T}}
\newcommand{\R}{\mathbb{R}}
\newcommand{\Cp}{\mathbb{C}}
\newcommand{\ol}{\overline}
\renewcommand{\l}{\lambda}
\newcommand{\X}{\mathcal{X}}
\newcommand{\ab}{_{\alpha,\beta}}
\title[Poncelet Center Power and Loci]{Invariant Center Power and Elliptic Loci\\of Poncelet Triangles}
\author[Helman, Laurain, Garcia, and Reznik]{Mark Helman, Dominique Laurain\\Ronaldo Garcia, and Dan Reznik}
\date{January, 2021}
\begin{document}

\maketitle

\begin{abstract}
We study center power with respect to circles derived from Poncelet 3-periodics (triangles) in a generic pair of ellipses as well as loci of their triangle centers. We show that (i) for any concentric pair, the power of the center with respect to either circumcircle or Euler's circle is invariant, and (ii) if a triangle center of a 3-periodic in a generic nested pair is a fixed linear combination of barycenter and circumcenter, its locus over the family is an ellipse.
\end{abstract}

\section{Introduction}
\label{sec:intro}
Poncelet N-periodics are families of N-gons inscribed in a first conic while simultaneously circumscribing a second conic \cite{dragovic11}. We continue our study of loci and invariants of Poncelet 3-periodics (see related work below). Previously we focused on families interscribed between concentric, axis-aligned ellipse pairs. Here we expand the analysis to a generic pair of nested ellipses and explore (i) the power of the center with respect to well-known circles, and (ii) loci of triangle centers under various ellipse arrangements, see Figure~\ref{fig:n3-general-pos}. Recall triangle centers are points in the plane of a triangle (e.g, incenter, circumcenter, etc.) whose trilinear coordinates obey certain conditions \cite{kimberling1993_rocky}.

\begin{figure}
    \centering
    \includegraphics[width=.6\textwidth]{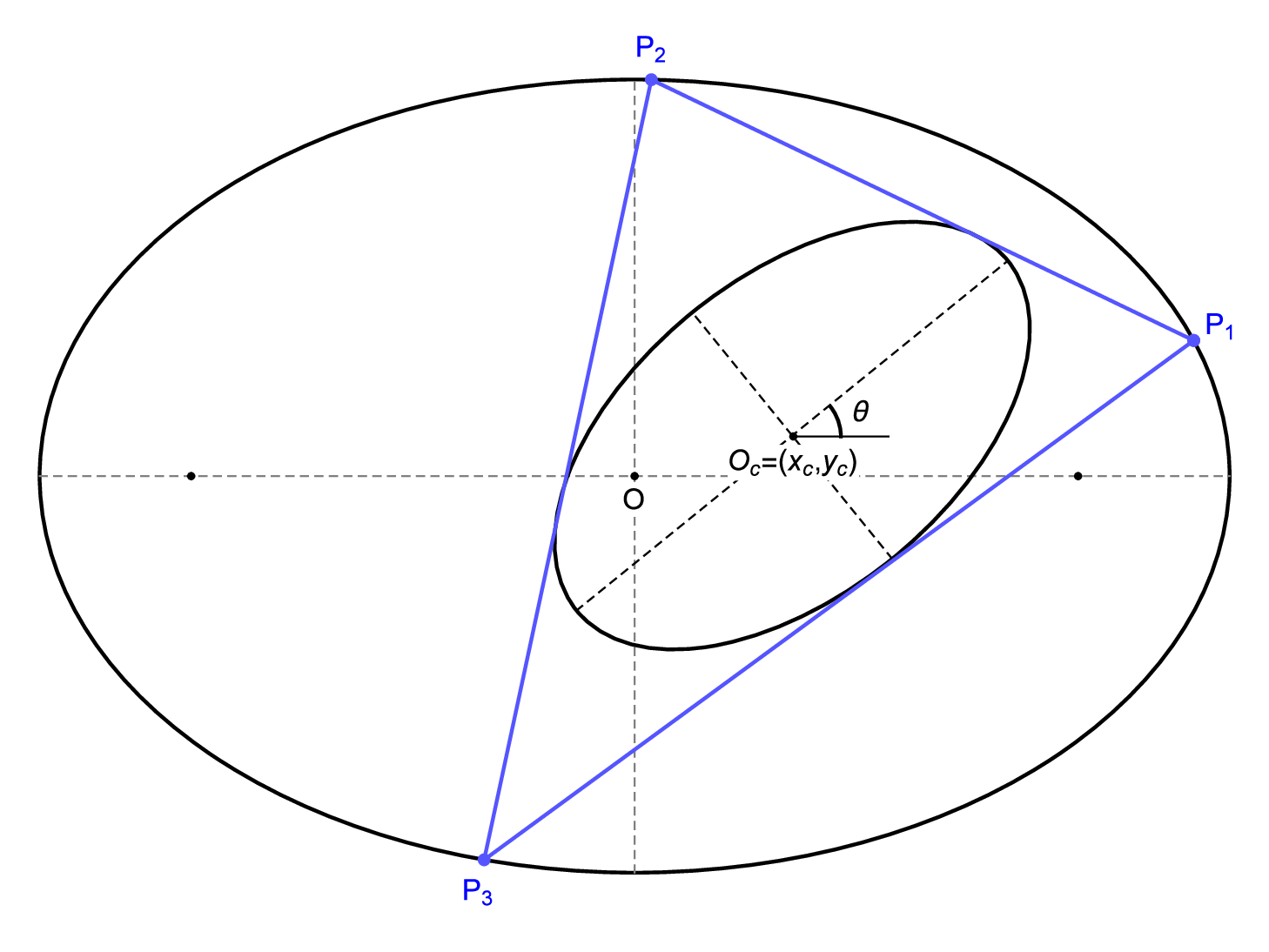}
    \caption{A pair of ellipses in general position which admits a Poncelet 3-periodic family (blue). Let the outer one be centered at the origin $O$. Their major axes are tilted by $\theta$, and their centers displaced by $O_c=(x_c,y_c)$. \href{https://youtu.be/bjHpXVyXXVc}{Video}}
    \label{fig:n3-general-pos}
\end{figure}

\subsection*{Main Results}

\begin{itemize}
  \item Section~\ref{sec:axis-aligned}: We first show that over 3-periodics in several concentric, axis-aligned pairs -- confocal, homothetic, with incircle, with circumcircle, and excentral -- the power of the center with respect to either the circumcircle or Euler's circle is invariant. Using analytic geometry (with trilinear coordinates), we derive explicit formulas for said powers for each family.
  \item Section~\ref{sec:concentric-tilted}: using CAS-based manipulation, we generalize this by proving that the power of the center with respect to both Euler's circle and the circumcircle is invariant for 3-periodics for any generic concentric pair (aligned or not), Theorem~\ref{thm:power-concentric-unaligned}.
    \item Section~\ref{sec:nonconcentric-circumcircle}: We then consider 3-periodics in the non-concentric pair with circumcircle. Using  a special parametrization based on Blaschke products \cite{daepp-2019}, we show that loci of triangle centers which are fixed affine combinations of the barycenter and circumcenter are circles, whose centers are collinear along a line passing through the stationary circumcenter.
    \item Section~\ref{sec:nonconcentric-tilted} For the generic case of 3-periodics in the non-concentric, non-axis-aligned ellipse pair, we show that triangle centers which are fixed linear combinations of barycenter and circumcenter will trace out elliptic loci, Theorem~\ref{thm:ellipse-locus}. These include such centers as the orthocenter, the center of the Euler circle, the de Longchamps point, etc.; see Observation~\ref{obs:affine-euler-line}.
\end{itemize}

In Section~\ref{sec:open-questions} we conclude with a few experimental conjectures. Appendix~\ref{app:symbols} contains a list of symbols used herein.

\subsection*{Related Work}

In \cite{odehnal2011-poristic}, the loci of many triangle centers over the poristic family (fixed circumcenter and incenter) are shown to be either stationary, circular, or elliptic. In \cite{sergei2016-com}, the loci of vertex and area centroids are proved to be ellipses over a generic Poncelet family. The circumcenter of mass (which is simply the circumcenter for Poncelet 3-periodics) is shown to be an ellipse in \cite{sergei2014-circumcenter-of-mass}.
 
 Properties of 3-periodics in the confocal pair (elliptic billiard) were studied in \cite{reznik2020-intelligencer,garcia2020-new-properties}. A few results and their subsequent proofs include: the elliptic locus of the incenter \cite{olga14,garcia2019-incenter}, circumcenter \cite{garcia2019-incenter,corentin19}, invariant sum of cosines  \cite{akopyan2020-invariants,bialy2020-invariants}, and invariant ratio of outer-to-orbit polygon areas  \cite{caliz2020-area-product}. 
 
In \cite{garcia2020-ellipses} it was shown that over confocal 3-periodics, 29 triangle centers (out of the first 100 in \cite{etc}) trace out ellipses. Explicit expressions are given for the semi-axes of each locus. In subsequent works, we studied the relationship between (i) poristic triangles and the confocal family (poristic) \cite{garcia2020-similarity-I}, and (ii) the homothetic family and the Brocard porism \cite{reznik2020-similarityII}, showing that said pairs are images of each other under a variable similarity transform. In \cite{garcia2020-family-ties} we compare several loci and invariants across several concentric, axis-aligned pairs, grouping them into clusters.

\section{Preliminaries}
\label{sec:preliminaries}


Consider two nested ellipses $\E$ and $\E_c$ with semi-axes $(a,b)$ and $(a_c,b_c)$: $\E$ is centered at the origin $O$ and $\E_c$ at $O_c=(x_c,y_c)$. Let $\theta$ denote the angle between their major axes; see Figure~\ref{fig:n3-general-pos}. If $O_c=0$ we call the pair ``concentric''. If $\theta=0$ we call it ``axis-aligned''. Additionally, let $c^2={a^2-b^2}$ and $c_c^2={a_c^2-b_c^2}$ denote their half focal distances. Note these are the squares of half the focal distance.

\begin{definition}
The power $\P_X$ of a point $X$ with respect to a circle centered on $C$ and of radius $R$ is given by \cite[Circle Power]{mw}:

\[ \P_X(C,R) = |X-C|^2-R^2 \]
\end{definition}

Recall the circumcircle passes through triangle vertices. Using Kimberling's notation for triangle centers \cite{etc}, let $X_3$ and $R$ denote its center and radius; these are known as circumcenter and circumradius. Also recall Euler's (or Feuerbach's, or the 9-point) circle: it passes through the sides' midpoints. Its radius is half the circumradius \cite[Nine-point circle]{mw}. Let $X_5$ denote its center. The following shorthands will be used for the power of a point $O$ wrt to either circumcircle or Euler's circle:

\[ \P_3 = \P_{O}(X_3,R),\;\;\;\P_5 = \P_{O}(X_5,R/2) \]

\section{Concentric, Axis-Aligned: Invariant Power of Origin}
\label{sec:axis-aligned}
In this Section we study the power of the center of the system over several classic concentric, axis-aligned Poncelet families. A key observation is that said power remains constant with respect to two classic circles (the circumcircle and Euler's circle), despite the fact that their centers and radii variable.

\subsection{Pair with Incircle}

Consider an ellipse pair where the inner ellipse is a circle of radius $r$; see Figure~\ref{fig:incircle}. The Cayley condition for 3-periodics reduces to $r=(a b)/(a+b)$ \cite[Coroll. 1]{garcia2020-family-ties}. By definition, the incenter $X_1$ lies at the origin and the inradius is constant.

Remarkable properties of this family include the fact that (i) it conserves the sum of cosines, (ii) the circumradius $R$ is invariant, and (iii) the locus of both $X_3$ and $X_5$ are circles\footnote{Amongst the first 201 centers in \cite{etc} the loci of the following are circles concentric with $X_1$: $X_k$, $k=$3, 5, 11, 12, 35, 36, 40, 46, 55, 56, 57, 65, 80, 119, 165 \cite{garcia2020-family-ties}.} concentric with $X_1$. Let $r_3$ and $r_5$ denote their radii, respectively. These are given by \cite[Section 3]{garcia2020-family-ties}:

\[ R = \frac{a+b}{2},\;\;\;r_3 = \frac{a-b}{2},\;\;\;r_5 = \frac{(a-b)^2}{4(a+b)}\]

\begin{proposition}
Over 3-periodics in the concentric pair with incircle, the power of the center $O=X_1$ with respect to either circumcircle \cite[Prop. 1]{garcia2020-family-ties} and Euler's circle are invariant and given by:

\[ \P_3= -a b,\;\;\;\P_5= -a b \frac{a^2 + b^2}{2(a + b)^2}\]

\end{proposition}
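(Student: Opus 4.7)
The plan is to combine the definition of circle power with the known invariants of this family listed just above the proposition. Because $O=X_1$ is both the common center of the configuration and the common center of the loci of $X_3$ and $X_5$, the quantities $|O-X_3|$ and $|O-X_5|$ are literally the radii $r_3$ and $r_5$ cited from \cite[Section 3]{garcia2020-family-ties}. So the whole computation collapses to two one-line algebraic simplifications.

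First I would observe
\[
\P_3 \;=\; |O-X_3|^2 - R^2 \;=\; r_3^2 - R^2 \;=\; \left(\frac{a-b}{2}\right)^{\!2} - \left(\frac{a+b}{2}\right)^{\!2},
\]
which factors as a difference of squares into $-ab$. The fact that both $r_3$ and $R$ are constants over the family (with the latter being the aforementioned invariant circumradius) already implies invariance of $\P_3$; the explicit value merely confirms the formula in the statement.

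Next I would do the same for Euler's circle, using that its radius is $R/2$ and that its center $X_5$ also traces a circle of radius $r_5$ centered on $X_1=O$:
\[
\P_5 \;=\; r_5^2 - (R/2)^2 \;=\; \frac{(a-b)^4}{16(a+b)^2} - \frac{(a+b)^2}{16} \;=\; \frac{(a-b)^4-(a+b)^4}{16(a+b)^2}.
\]
Factoring $(a-b)^4-(a+b)^4 = [(a-b)^2-(a+b)^2][(a-b)^2+(a+b)^2] = (-4ab)\cdot 2(a^2+b^2)$ yields the claimed $-ab(a^2+b^2)/[2(a+b)^2]$.

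There is no real obstacle here, since every ingredient (the values of $R$, $r_3$, $r_5$, and the fact that the loci of $X_3,X_5$ are circles concentric with $O$) is already in hand from the discussion preceding the proposition. The only thing worth being explicit about is why $|O-X_i|=r_i$ at every moment, which is immediate from concentricity of the locus circle with $O$. Everything else is algebra.
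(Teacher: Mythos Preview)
Your argument is correct and complete. It is also considerably more direct than the paper's own proof. The paper derives general sidelength formulas for the power of the incenter with respect to the circumcircle and Euler circle, namely
\[
\P_3=-\frac{s_1s_2s_3}{s_1+s_2+s_3},\qquad
\P_5=-\frac{s_1^3+s_2^3+s_3^3-\sum_{i\neq j}s_i^2 s_j+4s_1s_2s_3}{4(s_1+s_2+s_3)},
\]
and then introduces an ad hoc parametrization of the sidelengths of the incircle family (in a parameter $t$) to reduce $\P_5$ to $r^2-\tfrac12 w$ with $r=ab/(a+b)$, $w=ab$. That route has the side benefit of producing those general sidelength identities, but for the proposition at hand it is redundant: the invariants $R$, $r_3$, $r_5$ and the concentricity of the $X_3$ and $X_5$ loci are already quoted from \cite{garcia2020-family-ties} immediately above, and your two one-line computations $r_3^2-R^2$ and $r_5^2-(R/2)^2$ finish the job. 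The only point to make explicit, which you do, is that ``locus is a circle of radius $r_i$ concentric with $O$'' means $|O-X_i|=r_i$ at every instant, so both summands in the power formula are individually constant.
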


\begin{proof}
We compute powers wrt circumcircle and Euler circle as functions of sidelengths $s_1,s_2,s_3$:

\[ \P_3 = -\frac{s_1 s_2 s_3}{s_1 + s_2 + s_3} \]

\[\P_5 = \frac{-(s_1^3 + s_2^3 + s_3^3 - s_1^2(s_2 + s_3) - s_2^2(s_3 + s_1) - s_3^2(s_1 + s_2) + 4 s_1 s_2 s_3 )}{4 (s_1 + s_2 + s_3)} \]

$\P_3$ is $-1/{\pi}$ times the area of the circumellipse centered on the incenter, i.e., $\P_3 = -a b$. In order to simplify $\P_5$ formula we derive the following parametrization for the sidelengths:

\[ s_1 = \frac{w \tau}{r},\;\;\;
 s_2 = \frac{4r(r^2 + (1 - t)w)}{(2r^2 + (1 - t)w)\tau + (1 - t)z},\;\;\;
 s_3 = \frac{4r(r^2 + (1 - t)w)}{(2r^2 + (1 - t)w)\tau - (1 - t)z}\]

\noindent with $\tau=\sqrt{1-t^2}$ and $z =\sqrt{4r^2(t - 1)w + (1 - t^2)w^2 - 4r^4}$, $r =(ab)/(a + b)$, and $w = ab$. Replacing $s_1$, $s_2$, $s_3$ obtain $\P_5 = r^2 - 1/2 w$ and the formula in the proposition.
\end{proof}

\begin{figure}
    \centering
    \includegraphics[width=.7\textwidth]{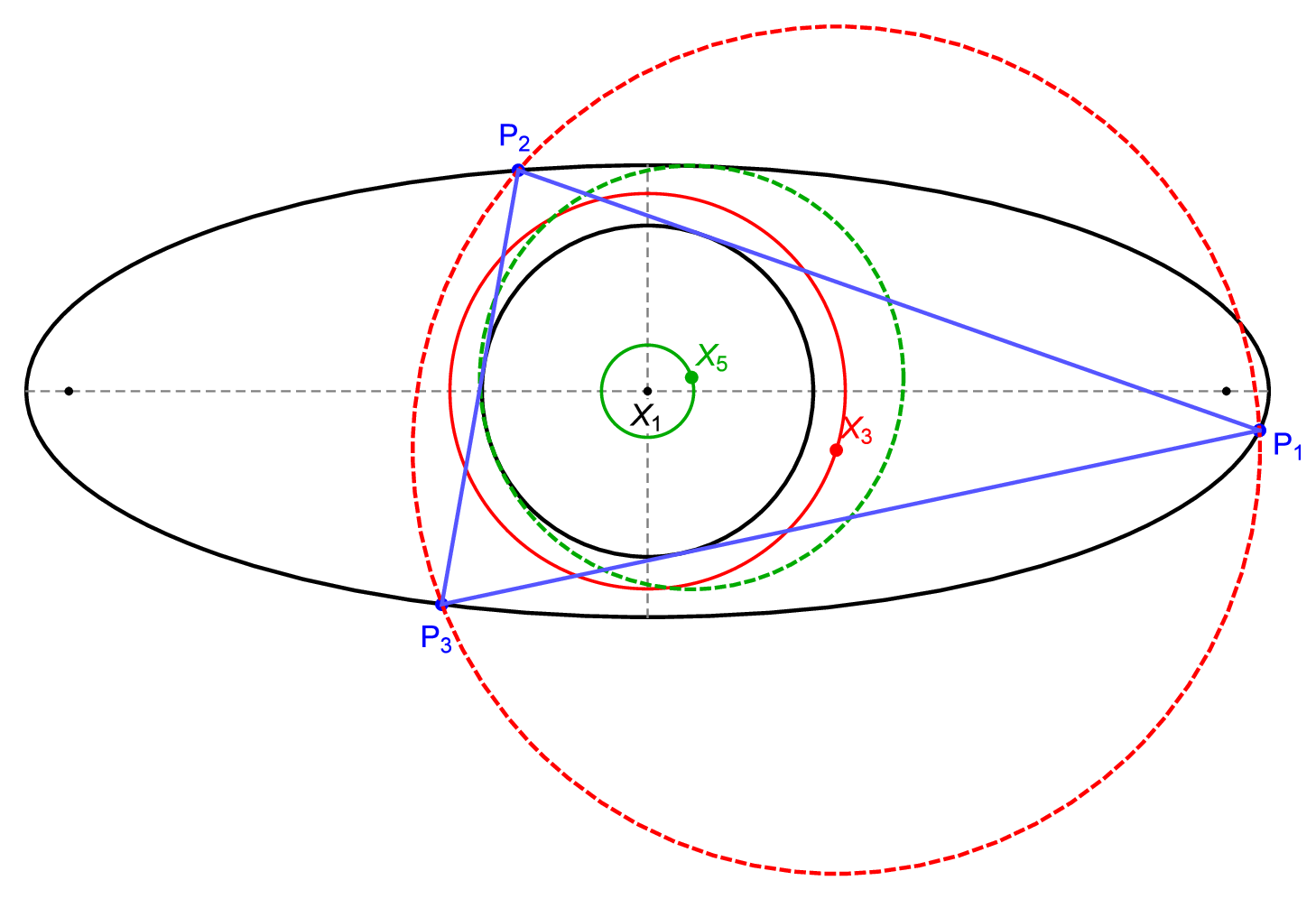}
    \caption{In the ellipse pair with incircle, the locus of both $X_3$ and $X_5$ are concentric circles (red and green). The power of the center $O=X_1$ wrt to either the circumcircle (dashed red) or Euler's circle (dashed green) is invariant. \href{https://bit.ly/3snkrJc}{live}}
    \label{fig:incircle}
\end{figure}

\subsection{Pair with Circumcircle}

Consider an ellipse pair where the outer ellipse is a circle of radius $a=b=R$ and the inner one is a concentric ellipse  with semi-axes $(a_c,b_c)$. The Cayley condition for 3-periodics to exist reduces to $a_c+b_c=R$ \cite{garcia2020-family-ties}. By definition, the incenter $X_3$ lies at the origin and the circumradius $R$ is constant. Invariants known to this family include the product of cosines and the sum of sidelengths squared \cite{garcia2020-family-ties}. Referring to Figure~\ref{fig:circumcircle}:

\begin{proposition}
Over 3-periodics in the concentric pair with circumcircle, the locus of $X_5$ is a circle\footnote{Amongst the first 201 centers in \cite{etc} the loci of the following are circles concentric with $X_3$: $X_k$, $k=$2, 4, 5, 20, 22, 23, 24, 25, 26, 74, 98, 99, 100, 101, 102, 103, 104, 105, 106, 107, 108, 109, 110, 111, 112, 140, 156, 186, 201 \cite{garcia2020-family-ties}.} concentric with original pair whose radius $r_5$ is given by $r_5= ({a_c-b_c})/{2}$. 
\end{proposition}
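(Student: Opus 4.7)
The plan leverages two simplifications afforded by this family. First, because the outer conic is a circle, the circumcircle of every 3-periodic coincides with that outer circle, so the circumcenter $X_3$ is pinned at the common center $O$ throughout the family. Second, the Euler-line identity $X_5 = (X_3+X_4)/2$, combined with the standard fact that the orthocenter equals $P_1+P_2+P_3$ whenever the circumcenter is at the origin, reduces the proposition to showing that $(P_1+P_2+P_3)/2$ traces a circle of radius $(a_c-b_c)/2$ centered at $O$.

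To compute this sum, I would appeal to the Blaschke-product parametrization of \cite{daepp-2019} (the same device earmarked for Section~\ref{sec:nonconcentric-circumcircle}). After rescaling the outer circle to the unit disk, the inner ellipse is concentric with foci at $\pm e$ on the real axis, where $e = c_c/R$. By Daepp--Gorkin--Mortini, the 3-periodics are exactly the unit-modulus triples of preimages $B^{-1}(\lambda)$, $\lambda\in\partial\D$, of the degree-$3$ Blaschke product
\[
 B(z) \;=\; z\,\frac{z^2-e^2}{1 - e^2 z^2}.
\]
Clearing denominators, $B(z)=\lambda$ becomes $z^3 + \lambda e^2 z^2 - e^2 z - \lambda = 0$, and the first Vieta relation gives $z_1+z_2+z_3 = -\lambda e^2$.

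Since $|\lambda|=1$, the modulus $|z_1+z_2+z_3|=e^2$ is parameter-independent, and as $\lambda$ winds around $\partial\D$ the sum $-\lambda e^2$ sweeps out the full circle of radius $e^2$ about the origin. Undoing the rescaling, $X_5 = (P_1+P_2+P_3)/2$ therefore traces a circle of radius $Re^2/2 = c_c^2/(2R)$ centered at $O$; the Cayley condition $a_c+b_c=R$ rewrites $c_c^2 = a_c^2 - b_c^2 = R(a_c-b_c)$, yielding the stated $r_5=(a_c-b_c)/2$. The only potential friction is invoking the Blaschke machinery one section early, but because this is the concentric case it coincides with the classical Daepp--Gorkin--Mortini setting and requires no non-concentric extension.
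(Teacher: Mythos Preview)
Your argument is correct. The identity $X_5=(X_3+X_4)/2$ together with $X_4=P_1+P_2+P_3$ (valid whenever the circumcenter sits at the origin) reduces the claim to computing $\sigma_1=z_1+z_2+z_3$, and your Blaschke/Vieta computation $\sigma_1=-\lambda e^2$ is exactly right; the Cayley identity $a_c+b_c=R$ then converts $c_c^2/(2R)$ into $(a_c-b_c)/2$ as you show. The only tacit step worth making explicit is that, since the outer conic is a circle, one may rotate so that the caustic's major axis lies on the real line, placing the foci at $\pm e$.

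As for comparison with the paper: the paper gives no proof of this proposition in Section~\ref{sec:axis-aligned}; it is stated as a known fact, with the footnote pointing to \cite{garcia2020-family-ties}. Your approach is precisely the Blaschke-product machinery the paper develops in Section~\ref{sec:nonconcentric-circumcircle}, specialized to $g=-f$. Indeed, your result is the concentric case of Proposition~\ref{prop:LinComb-concentric}: writing $X_5=\tfrac{3}{2}X_2-\tfrac{1}{2}X_3$ (so $\alpha=3/2$), that proposition gives locus center $\alpha(f+g)/3=0$ and radius $|\alpha fg|/3=e^2/2$, matching your direct calculation. So far from being ``friction,'' your forward reference is entirely in the spirit of the paper's later development.
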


\begin{figure}
    \centering
    \includegraphics[width=.5\textwidth]{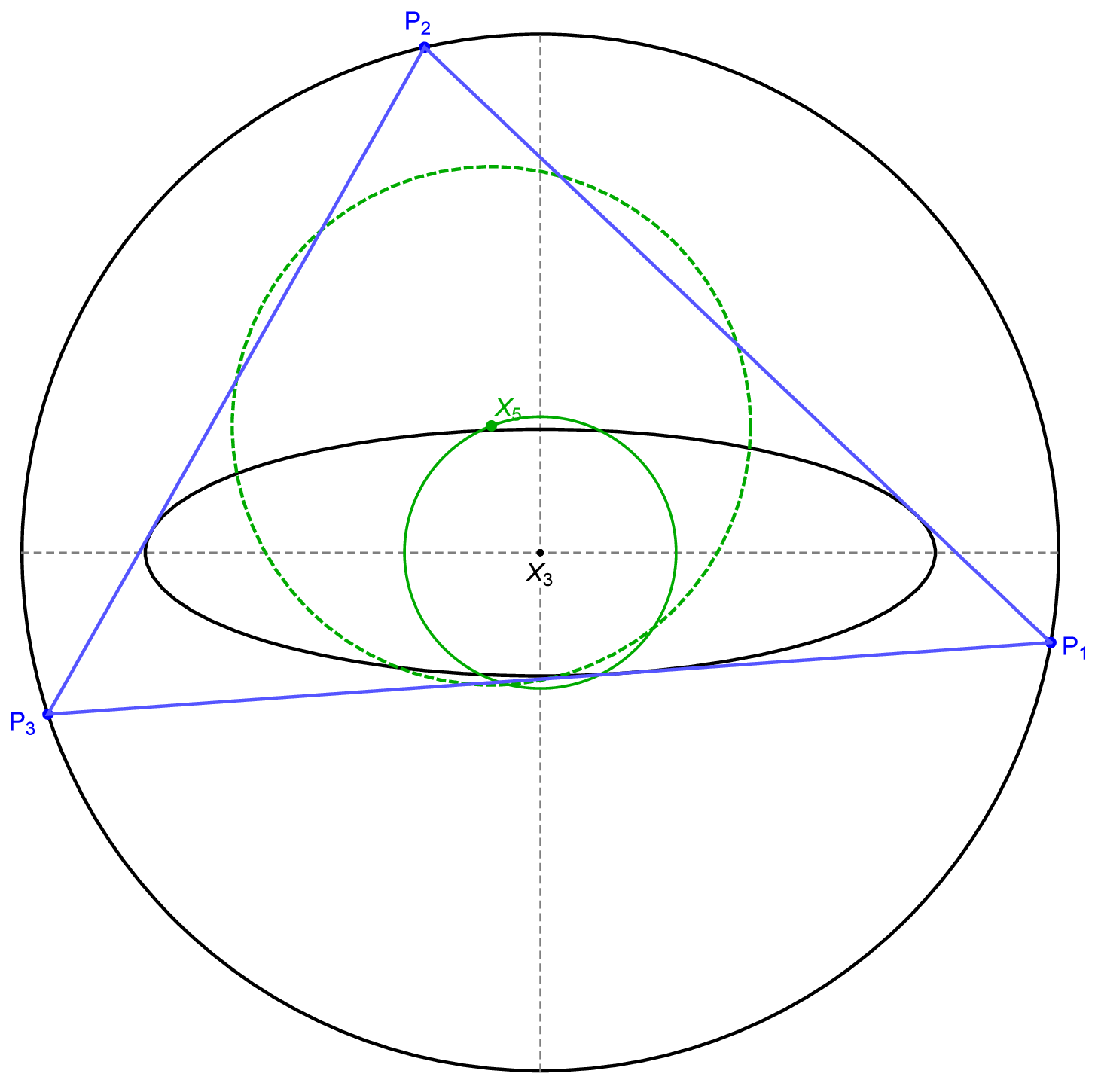}
    \caption{A circle and a concentric inellipse and a  3-periodic (blue). By definition, $X_3$ is stationary at the common center and the circumradius $R$ is constant. The locus of the center $X_5$ of Euler's circle (dashed green) is a concentric circle (solid green). \href{https://bit.ly/31nIg84}{live}}
    \label{fig:circumcircle}
\end{figure}

\begin{corollary}
Over 3-periodics in the concentric pair with circumcircle, the power of the center $O=X_3$ with respect to either circumcircle or Euler's circle is invariant and given by:

\[ \P_3 = -R^2,\;\;\;\P_5 = r_5^2-(R/2)^2 = -a_c b_c\]
\end{corollary}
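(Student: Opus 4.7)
The corollary should follow almost immediately from the preceding proposition together with the Cayley condition $a_c+b_c=R$, so the plan is essentially to chain together facts already available in this section and perform one small algebraic simplification.

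First I would handle $\P_3$. Since by construction of the ellipse pair the outer curve is the circumcircle itself, the circumcenter $X_3$ is stationary at the common center $O$. Therefore $|O-X_3|=0$, and because the circumradius $R$ is constant throughout the family (its value is fixed by the outer circle), one gets $\P_3=|O-X_3|^2-R^2=-R^2$ directly from the definition of circle power. The invariance is automatic.

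Next I would turn to $\P_5$. The preceding proposition states that the locus of $X_5$ is a circle concentric with $O=X_3$ of radius $r_5=(a_c-b_c)/2$. Hence $|O-X_5|=r_5$ is constant along the family. Euler's circle has radius $R/2$, also constant here. Substituting into the definition of power gives $\P_5=r_5^2-(R/2)^2$, establishing invariance. To match the stated closed form, I would use the Cayley condition $a_c+b_c=R$, which yields
\[
\P_5=\frac{(a_c-b_c)^2}{4}-\frac{(a_c+b_c)^2}{4}=\frac{-4a_cb_c}{4}=-a_c b_c,
\]
completing the proof.

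There is no real obstacle: the content of the corollary is just the combination of (i) the identification $O=X_3$, (ii) constancy of $R$, (iii) the locus result for $X_5$ just proved, and (iv) a one-line application of the Cayley relation. The only thing worth double-checking is that the family genuinely satisfies $a_c+b_c=R$ (stated at the start of the subsection) so that the elegant form $-a_c b_c$ is valid; otherwise one would be left with the equivalent but less revealing expression $r_5^2-(R/2)^2$.
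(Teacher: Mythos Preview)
Your argument is correct and is exactly the derivation the paper intends: the corollary is stated without proof precisely because it follows immediately from the preceding proposition (giving $r_5=(a_c-b_c)/2$), the identification $O=X_3$ with constant $R$, and the Cayley relation $a_c+b_c=R$. Your one-line simplification $(a_c-b_c)^2/4-(a_c+b_c)^2/4=-a_c b_c$ is the intended step.
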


\subsection{Homothetic Pair}

Consider a pair of concentric, homothetic ellipses admitting a 3-periodic family (elliptic billiard); see Figure~\ref{fig:homothetic}.

\begin{figure}
    \centering
    \includegraphics[width=.7\textwidth]{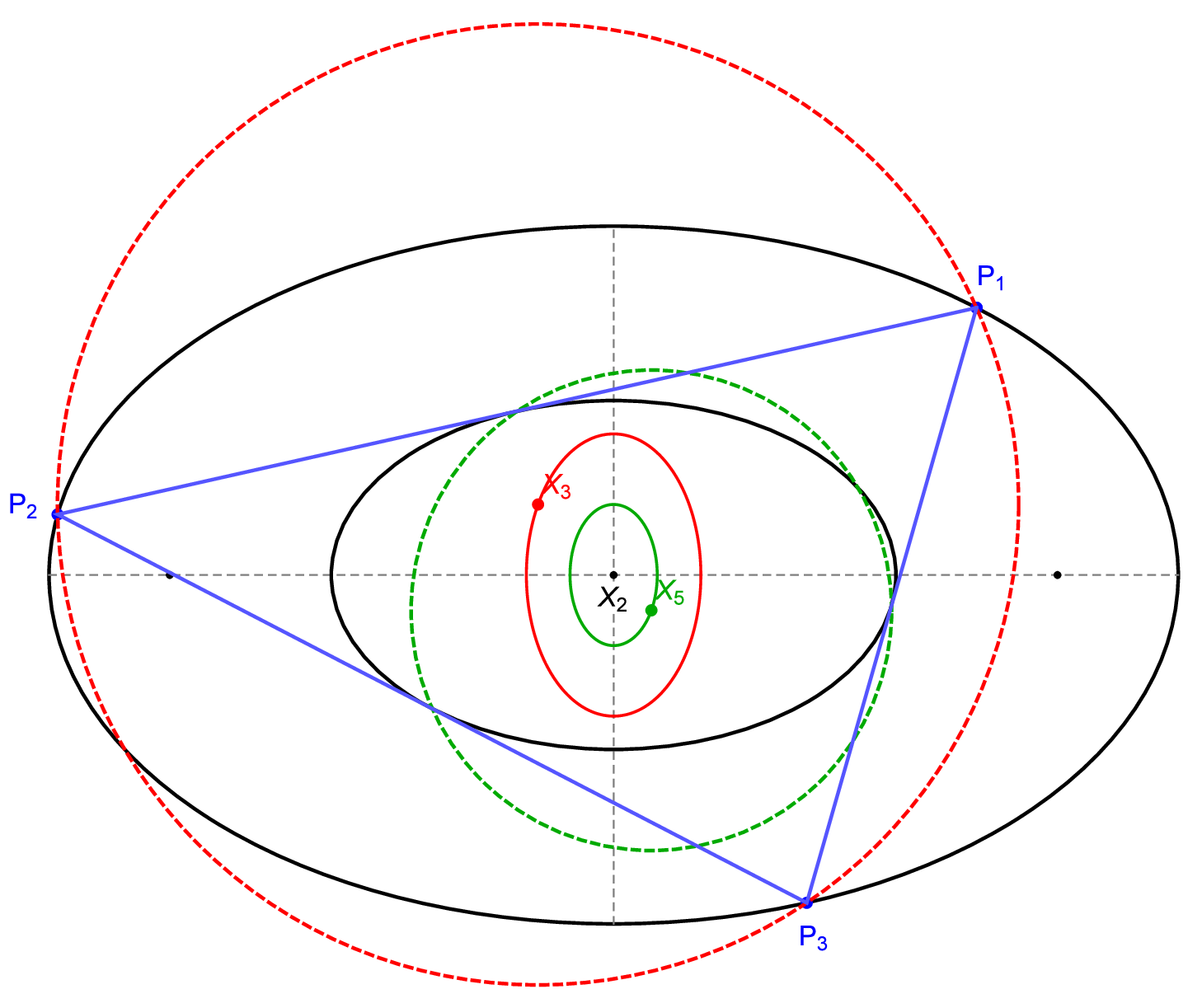}
    \caption{A concentric, homothetic pair of ellipses (black) and a 3-periodic (blue) interscribed between them. The barycenter $X_2$ is stationary at the common center. Over the family, the locus of $X_3$ and $X_5$ are concentric ellipses (solid red and green). The power of the center wrt the circumcircle (dashed red) and Euler's circle (dashed green) is invariant. \href{https://bit.ly/3vZEr6W}{live}}
    \label{fig:homothetic}
\end{figure}

This family conserves area and sum of squared sidelengths, and consequently the Brocard angle \cite{reznik2020-similarityII}. The Barycenter $X_2$ is stationary at $O$. The Cayley condition implies that $(a_c,b_c)=(a/2,b/2)$.

Over this family, the locus of both $X_3$ and $X_5$ are ellipses concentric and axis-aligned with the original pair \cite{garcia2020-family-ties}. Their semi-axes are given by:

\[(a_3,b_3)=\frac{c^2}{4}\left(\frac{1}{a},\frac{1}{b}\right),\;\;\text{and}\;\;
	(a_5,b_5)=\frac{c^2}{8}\left(\frac{1}{a},\frac{1}{b}\right) \]


\noindent Nevertheless:

\begin{proposition}
Over 3-periodics in the homothetic pair, the power of the center $O=X_2$ with respect to both the circumcircle and Euler circle are invariant and given by:

\[ 
\P_3 = -\frac{a^2+b^2}{2},\;\;\;\P_5=-\frac{a^2+b^2}{8} \]
\end{proposition}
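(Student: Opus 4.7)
My plan is to reduce both powers to a single well-known invariant of the homothetic family, namely the sum of squared sidelengths $L := s_1^2 + s_2^2 + s_3^2$. The classical Leibniz--Euler identity
\[ |X_2 - X_3|^2 = R^2 - L/9, \]
combined with the fact that here $O = X_2$, gives immediately
\[ \P_3 = |X_2 - X_3|^2 - R^2 = -L/9. \]

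The section already recalls that $L$ is invariant over the homothetic 3-periodic family. Since $L$ is constant, I would pin down its value by evaluating on any convenient orbit. The natural choice is the symmetric isoceles 3-periodic with one vertex at $(a,0)$: by symmetry the opposite side is vertical, and tangency to the inner ellipse with semi-axes $(a/2,b/2)$ forces that side to lie on $x=-a/2$, giving the remaining vertices $(-a/2,\pm b\sqrt{3}/2)$. A short computation of the three sidelengths then yields $L = 9(a^2+b^2)/2$, and hence $\P_3 = -(a^2+b^2)/2$.

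For $\P_5$, I would exploit the Euler line. From $X_5 = (X_3 + X_4)/2$ and $X_2 = (2X_3 + X_4)/3$ one derives $X_2 - X_5 = -(X_2 - X_3)/2$, so $|X_2 - X_5|^2 = |X_2 - X_3|^2/4$. Therefore
\[ \P_5 = |X_2 - X_5|^2 - (R/2)^2 = \tfrac{1}{4}\bigl(|X_2 - X_3|^2 - R^2\bigr) = \tfrac{1}{4}\P_3 = -(a^2+b^2)/8. \]

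I do not expect any serious obstacle: the only real computation is evaluating $L$ at the symmetric orbit, and the rest follows from standard Euler-line geometry and the already-cited invariance of $L$. The cleanest feature is the automatic relation $\P_5 = \tfrac{1}{4}\P_3$, which requires no information specific to the homothetic pair beyond the fact that $O$ coincides with the centroid, so invariance of $\P_3$ transfers to $\P_5$ for free.
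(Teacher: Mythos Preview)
Your proof is correct and follows essentially the same route as the paper: both reduce $\P_3$ and $\P_5$ to the invariant $L=\sum s_i^2$ via the classical distance formulas on the Euler line, with the paper citing the value $L=\tfrac{9}{2}(a^2+b^2)$ from a reference while you pin it down by evaluating on the symmetric orbit. Your derivation of $\P_5=\tfrac14\P_3$ directly from the Euler-line relation $X_2-X_5=-\tfrac12(X_2-X_3)$ is a clean touch that the paper does not make explicit (it simply states $\P_5=-\tfrac{1}{36}\sum s_i^2$ as a parallel computation).
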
 

\begin{proof}
Squared radii and squared distances between barycenter $X_2$ and circumcenter $X_3$ or nine-point center $X_5$ can be obtained via direct computation and in terms of squared sidelengths. This yields:

\[
\P_3 = -\frac{1}{9}\ \sum{ s_i^2},\;\;\;
\P_5= -\frac{1}{36}\sum{s_i^2} \]

Recall the sum of squared sidelengths is conserved in the homothetic pair and given by \cite[Remark 2.1]{reznik2020-similarityII}:

\[  \sum{s_i}^2=\frac{9}{2}(a^2+b^2)\]

\end{proof}

\subsection{Confocal Pair}

Consider a confocal pair of ellipses which admits a 3-periodic family (elliptic billiard); see Figure~\ref{fig:confocal}. Classic invariants include perimeter and Joachmisthal's constant \cite{sergei91}. A recent result is that the sum of angle cosines is invariant \cite{reznik2020-intelligencer,garcia2020-new-properties}. The Mittenpunkt $X_9$ is stationary at $O$ \cite{reznik2020-intelligencer}. The semi-axes of the inner ellipse are given by \cite{garcia2019-incenter}:

\begin{equation}
[a_c,b_c] = \frac{1}{c^2}\left[a(\delta-b^2),\;b(a^2-\delta)\right]
\label{eqn:confocal-axes}
\end{equation} 

\noindent where $\delta^2={a^4-a^2 b^2+b^4}$.

Furthermore, over said family, the locus of both $X_3$ and $X_5$ are concentric, axis-aligned ellipses with semi-axes are given by \cite{garcia2020-ellipses}:

\begin{align*}
    [a_3,b_3]=&\left[\frac{a^2-\delta}{2a},\frac{\delta-b^2}{2b}\right]\\
    [a_5,b_5]=&\left[\frac{- w_2(a,b)+ w_3(a,b) \delta}{ w_1(a,b)},\;\frac{ w_2(b,a)-{w_3(b,a) \delta}}{w_1(b,a)}\right]
\end{align*}

\noindent where $w_1(u,v)=4u(u^2-v^2)$, $w_2(u,v)=u^2(u^2+3v^2)$, $w_3(u,v)=3u^2+ v^2$.

Recall that over 3-periodics in the confocal pair, $\P_3=-\delta$ \cite[Thm. 3]{garcia2020-new-properties}. We extend this to $\P_5$:

\begin{proposition}
 Over 3-periodics in the confocal pair, the power of the center $O=X_9$ with respect to Euler's circle is invariant and given by:




\[ \P_5 =  \frac{\delta \mu \eta (\mu^2 + \eta^2 - 2 )}{\mu^2 + \eta^2 + 1} \]

\noindent where $\mu= {a}/{a_c}$ and $\eta= {b}/{b_c}$.
\end{proposition}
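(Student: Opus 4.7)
The plan is to mirror the strategy used in the earlier propositions of this section: reduce $\P_5$ to a symmetric function of the sidelengths $s_1,s_2,s_3$, parametrize the confocal 3-periodic family by a single variable, and verify invariance by symbolic simplification.

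First I would write $\P_5 = |X_9 - X_5|^2 - R^2/4$ explicitly. Using that $X_9$ has trilinears $s_2+s_3-s_1 : s_3+s_1-s_2 : s_1+s_2-s_3$, that $X_5$ is the midpoint of circumcenter and orthocenter (both of which have standard trilinear expressions), and the classical distance-between-triangle-centers formula in trilinear/barycentric coordinates, one obtains an explicit symmetric rational function
\[ \P_5 = H(s_1,s_2,s_3), \]
with $R = s_1 s_2 s_3/(4\,\text{Area})$ and Area given by Heron's formula. This already brings the problem into the same algebraic framework as the pair-with-incircle and homothetic cases handled above.

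Second, I would introduce a one-parameter parametrization of the confocal 3-periodic family. A convenient choice is to let the first vertex trace the outer ellipse as $P_1(t) = (a\cos t, b \sin t)$ and generate $P_2, P_3$ by the billiard reflection on the confocal caustic whose semi-axes are given by \eqref{eqn:confocal-axes}; this yields explicit (trigonometric/algebraic) formulas $s_i(t)$ analogous to the $s_i$ expressions derived in the incircle proof. Substituting into $H$ and grinding with a CAS, I would use the known confocal invariants (perimeter $L$ and Joachimsthal's constant, plus $\P_3 = -\delta$) to collapse the $t$-dependence. The target closed form is then $\delta\mu\eta(\mu^2+\eta^2-2)/(\mu^2+\eta^2+1)$ after expressing $a_c,b_c$ via \eqref{eqn:confocal-axes} and recalling $\mu = a/a_c$, $\eta = b/b_c$.

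The main obstacle is the algebraic bulk: $H$ is a degree-$4$-over-degree-$2$ symmetric rational function in the $s_i$, and after substituting the trigonometric parametrization it will have a large numerator and denominator whose $t$-dependence must cancel. The crucial identities that make this cancellation work are those intrinsic to the Cayley condition for 3-periodicity in the confocal pair, together with the constancy of the perimeter. As a sanity check I would evaluate at the symmetric configuration $t=0$ (where $P_1 = (a,0)$ and the orbit is isosceles about the $x$-axis), computing both sides of the claimed identity in elementary closed form and matching them to confirm that no spurious factors have been introduced; a second check at the isosceles configuration about the $y$-axis corroborates the symmetry $\mu \leftrightarrow \eta$ visible in the stated formula.
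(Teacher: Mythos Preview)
Your plan is sound and follows the same overall architecture as the paper's proof: express $\P_5$ as a symmetric rational function of the sidelengths, choose a one-parameter description of the confocal family, substitute, and let a CAS witness the cancellation. The paper in fact writes down the explicit sidelength form
\[
\P_5 = \kappa\,\frac{s_1^3+s_2^3+s_3^3-s_1^2(s_2+s_3)-s_2^2(s_3+s_1)-s_3^2(s_1+s_2)}{(s_1^2+s_2^2+s_3^2-2s_1s_2-2s_2s_3-2s_3s_1)^2},
\]
with $\kappa=(s_1+s_2-s_3)(s_1-s_2+s_3)(-s_1+s_2+s_3)$, matching your first step.

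Where the paper diverges from your proposal is in the choice of parametrization, and this is worth noting because it is what makes the symbolic step manageable. Rather than parametrize by the vertex angle $t$ on the outer ellipse (which produces bulky trigonometric/algebraic $s_i(t)$ and forces you to lean on perimeter and Joachimsthal identities to kill the $t$-dependence), the paper parametrizes directly by the \emph{invariants} of the confocal family: it sets $\rho=r/R$, $s$ the semi-perimeter, and takes as the single free variable $c_1=\cos\theta_1$. In those variables the sidelengths are rational in $c_1$ (up to one square root), and after substitution $\P_5$ collapses to $-(3-h^2)(1-h^2)s^2/(9-h^2)^2$ with $h^2=1-2\rho$, visibly independent of $c_1$. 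The final formula then follows from $a_c/a=(1+h)/2$, $b_c/b=(1-h)/2$. Your vertex-based route would arrive at the same place, but the paper's $(\rho,s,c_1)$ parametrization bakes the known invariants in from the start and avoids the heavier elimination you anticipate.
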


\begin{proof}

Straightforward computation of power of $O=X_9$ with respect to circumcircle and Euler circle gives:

\[P_3 = \frac{ -(s_1^3 + s_2^3 + s_3^3 - s_1^2 (s_2 + s_3) - s_2^2 (s_3 + s_1)  - s_3^2 (s_2 + s_1) + 6 s_1 s_2 s_3) s_1 s_2 s_3 }{(s_1^2 + s_2^2 + s_3^2 - 2 s_1 s_2 - 2 s_2 s_3 - 2 s_3 s_1)^2 }\]

\[\P_5 = \kappa \frac{ (s_1^3 + s_2^3 + s_3^3 - s_1^2 (s_2 + s_3) - s_2^2 (s_3 + s_1)  - s_3^2 (s_2 + s_1))}{(s_1^2 + s_2^2 + s_3^2 - 2 s_1 s_2 - 2 s_2 s_3 - 2 s_3 s_1)^2 } \]

\noindent where $\kappa=(s_1 + s_2 - s_3)(s_1 - s_2 + s_3)(-s_1 + s_2 + s_3)$.

We use the following parametrization for 3-periodics $P_1P_2P_3$ in the confocal pair. Let $\rho$ denote the invariant $r/R$ ratio (inradius/circumradius), $s$ the invariant semi-perimeter, and $c_1$ the cosine of the internal angle at $P_1$:

\[ s_1 = \frac{2(1 - c_1) s}{\rho - 2 c_1 + 2},\;\;\;s_2 = \frac{(\rho c_1 - c_1^2 + \rho + 1 - w) s}{(1 + c_1)(\rho - 2 c_1 + 2)},\;\;\;s_3 = \frac{(\rho c_1 - c_1^2 + \rho + 1 + w) s}{(1 + c_1)(\rho - 2 c_1 + 2)}\]

\noindent where $h^2=1-2\rho$, and $w^2 = (1 - c_1^2) (h^2 - (c_1 - \rho)^2)$. The squared semi-axis lengths are given by: 

\[ a^2 = \frac{4(1 + h)s^2}{(3 - h)(3 + h)^2},\;\;\;b^2 = \frac{4(1 - h)s^2}{(3 + h)(3 - h)^2} \]

\noindent Substituting $s_1$, $s_2$, $s_3$ in $\P_3$ and $\P_5$ we get powers with respect to the circles:

\[ \P_3 = \frac{-4(3 + h^2) s^2}{(9 - h^2)^2},\;\;\;
\P_5 = \frac{-(3 - h^2)(1 - h^2)s^2}{(9 - h^2)^2} \]

which doesn't depend of variable parameter $c_1$.

These values are always negative since $O$ is interior to both the circumcircle and Euler circle.

Finally, using $a_c/a = (1 + h)/2$ and $b_c/b = (1 - h)/2$ we get the formulas in the proposition.


\end{proof}

\begin{figure}
    \centering
    \includegraphics[width=.7\textwidth]{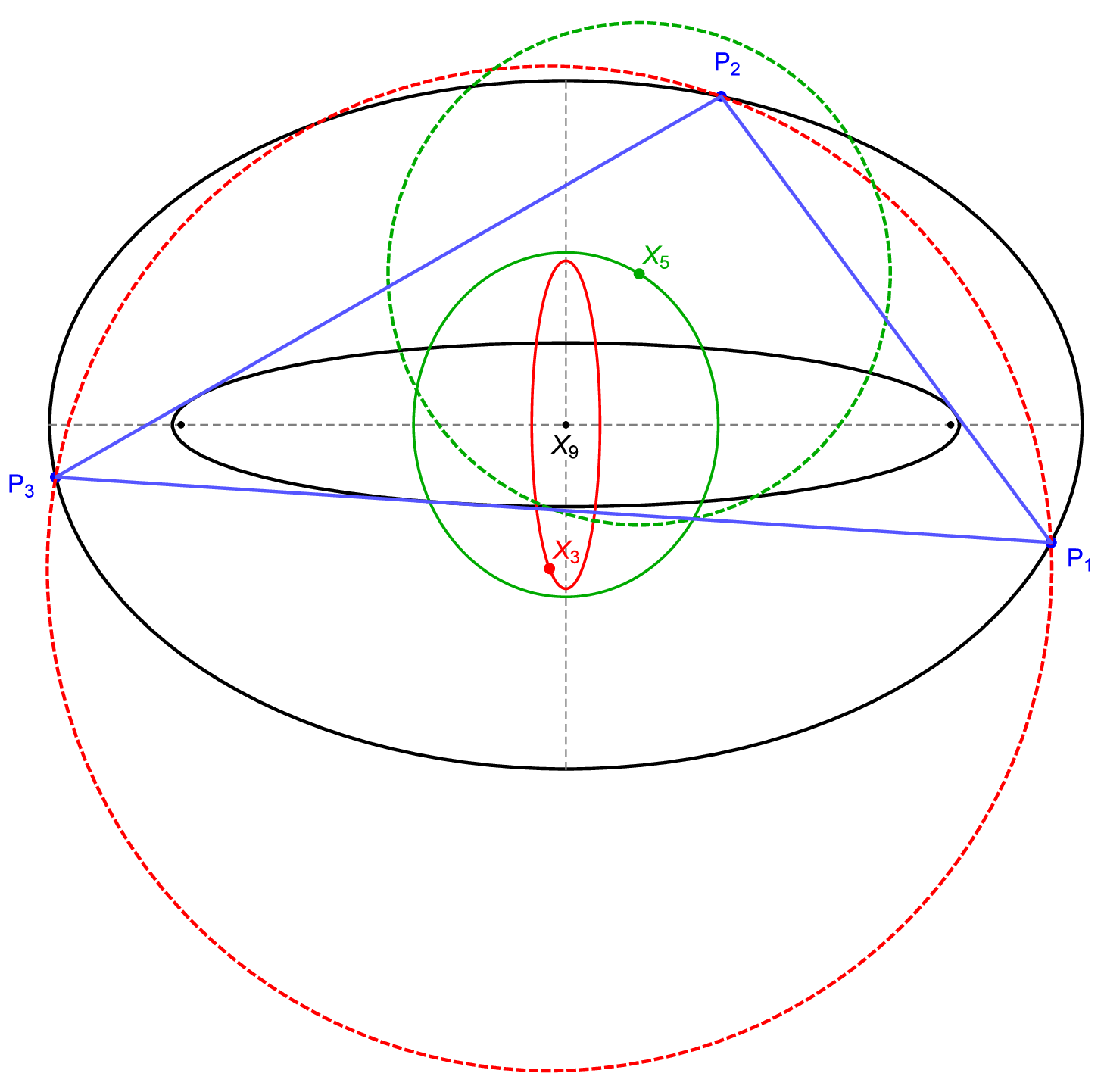}
    \caption{In the confocal pair (elliptic billiard), the locus of both $X_3$ and $X_5$ are concentric ellipses, (red and green), axis aligned with the original ones. The power of the center $O=X_9$ wrt to either the circumcircle (dashed red) or Euler's circle (dashed green) is invariant. \href{https://bit.ly/3w151N5}{live}}
    \label{fig:confocal}
\end{figure}

\subsection{Excentral to Confocals}

Referring to Figure~\ref{fig:excentral}, the excentral family comprises the excentral triangles to 3-periodics in the elliptic billiard. Abusing notation, here we let $a,b$ denote the axes of said elliptic billiard (i.e., the caustic to the excentral family), and $a_e,b_e$ denote the axes of outer ellipse $\E$, which in \cite{garcia2019-incenter} was derived as:

\begin{equation}
a_e = (a^2+\delta)/b, \;\;\;b_e = (b^2+\delta)/a
\label{eqn:excentrals}
\end{equation}

\noindent where $\delta$ is as in \eqref{eqn:confocal-axes}. Since the Euler's circle of an excentral triangle is the circumcircle of the reference:

\begin{observation}
Over the excentral family $\P_5=-\delta$.
\end{observation}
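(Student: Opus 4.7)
The plan is to reduce the claim to the already-known invariant $\P_3=-\delta$ for the confocal (reference) family via the classical identification of the excentral triangle's nine-point circle with the reference triangle's circumcircle.

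First I would set up notation carefully. Let $T$ denote a 3-periodic in the confocal pair (the reference triangle) with circumcenter $X_3(T)$ and circumradius $R$, and let $T^{*}$ be its excentral triangle, whose vertices are the three excenters of $T$. By the preceding subsection, the outer ellipse of the excentral family is $\E$ with semi-axes $(a_e,b_e)$ given by \eqref{eqn:excentrals}, and the caustic of the excentral family is the billiard ellipse with axes $(a,b)$. Crucially, the common center $O$ of this ellipse pair (the origin) coincides with the stationary Mittenpunkt $X_9$ of $T$, which is also the center of the outer ellipse of the confocal reference family.

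Next, I would invoke the classical fact that the nine-point circle of $T^{*}$ equals the circumcircle of $T$. Concretely, $X_5(T^{*}) = X_3(T)$ and the nine-point radius of $T^{*}$ equals $R$ (so half the circumradius of $T^{*}$, since the excentral circumradius is $2R$). This identification is exactly the hint flagged in the statement preceding the observation.

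Combining these two ingredients, the power of $O$ with respect to the Euler circle of $T^{*}$ equals the power of $O$ with respect to the circumcircle of $T$:
\[
\P_5 \;=\; |O - X_5(T^{*})|^2 - (R_{T^{*}}/2)^2 \;=\; |O - X_3(T)|^2 - R^2 \;=\; \P_3(\text{confocal}).
\]
By the cited result $\P_3=-\delta$ for 3-periodics in the confocal pair \cite[Thm.~3]{garcia2020-new-properties} (with $\delta^2=a^4-a^2b^2+b^4$), we conclude $\P_5=-\delta$ over the excentral family. No calculation is really required; the only subtle point is bookkeeping to make sure the center $O$ of the excentral outer ellipse is the same point as the one with respect to which the reference invariant $\P_3=-\delta$ was originally stated, which is immediate since both equal the origin of the confocal setup.
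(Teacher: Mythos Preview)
Your proposal is correct and takes essentially the same approach as the paper: the observation is justified there by the single sentence preceding it, ``Since the Euler's circle of an excentral triangle is the circumcircle of the reference,'' which is exactly the identification $X_5(T^{*})=X_3(T)$, $R_{T^{*}}/2=R$ that you spell out. Your added bookkeeping about the common center $O$ being the same point in both setups is a welcome clarification but does not depart from the paper's reasoning.
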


Still referring to Figure~\ref{fig:excentral}:

\begin{proposition}
In the family of excentral triangles to the confocal family, $\P_3$ is invariant and given by:

\[ \P_3 = -a^2-b^2-2\delta \]
\end{proposition}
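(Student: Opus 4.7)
The plan is to compute $\P_3$ for the excentral family directly via the side-length route used in the preceding confocal proof, rather than invoking any concentric-invariance statement (which is proved only later, in Section~\ref{sec:concentric-tilted}). The key identification is that the common center $O$ of $\E$ and the caustic coincides with the Mittenpunkt $X_9(T)$ of each reference billiard 3-periodic $T$, which by a classical triangle-geometric identity is the symmedian point $X_6(T_e)$ of the excentral triangle $T_e$. Thus the desired power is
\[
\P_3 \;=\; |X_6(T_e) - X_3(T_e)|^2 - R(T_e)^2 \;=\; -\,\frac{3\,\tilde{s}_1^{\,2}\tilde{s}_2^{\,2}\tilde{s}_3^{\,2}}{(\tilde{s}_1^{\,2}+\tilde{s}_2^{\,2}+\tilde{s}_3^{\,2})^2},
\]
where the $\tilde{s}_i$ are the sides of $T_e$ and the right-hand side is the standard expression for the power of the symmedian point with respect to the circumcircle.

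The next step is to convert back to data of the reference triangle via $\tilde{s}_i = 4R\cos(A_i/2)$, together with the classical identities $\cos(A/2)\cos(B/2)\cos(C/2)=s/(4R)$ and $\sum\cos A_i = 1 + r/R$, where $R$, $r$, and $s$ denote the circumradius, inradius, and semi-perimeter of $T$. These collapse the product and squared-sum to $\tilde{s}_1\tilde{s}_2\tilde{s}_3 = 16R^2 s$ and $\sum \tilde{s}_i^{\,2} = 8R^2(4 + r/R)$, yielding
\[
\P_3 \;=\; -\,\frac{12\,s^2}{(4 + r/R)^2}.
\]
Since the confocal family conserves both $s$ and $\rho = r/R$, this already displays $\P_3$ as an invariant of the excentral family.

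Finally, I would identify this constant with $-(a^2+b^2+2\delta)$ by reusing the parametrization $h^2 = 1 - 2\rho$ and the billiard-axis formulas from the proof of the preceding confocal proposition. A short calculation there gives $a^2+b^2 = 8s^2(3-h^2)/(9-h^2)^2$, and a straightforward expansion of $\delta^2 = (a^2+b^2)^2 - 3a^2b^2$ yields the clean form $\delta = 4s^2(3+h^2)/(9-h^2)^2$. Combined with $4+\rho = (9-h^2)/2$, these give
\[
a^2 + b^2 + 2\delta \;=\; \frac{48\,s^2}{(9-h^2)^2} \;=\; -\,\P_3,
\]
as required. The main obstacle is the algebraic step in this last paragraph: verifying that $(a^2+b^2)^2 - 3a^2b^2$ factors as a perfect square in the $(h,s)$ parametrization so that $\delta$ admits the stated closed form. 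Beyond this expansion and the identification $X_9(T)=X_6(T_e)$, no new geometric input is required.
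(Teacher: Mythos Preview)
Your argument is correct and lands on exactly the same closed form $-48s^2/(9-h^2)^2$ that the paper reaches, but the route to the intermediate side-length formula is genuinely different. The paper simply asserts, via ``straightforward computation'' in the reference sidelengths $s_1,s_2,s_3$, that the power of $X_9$ with respect to the Bevan circle equals
\[
\P_3=\frac{-12(s_1s_2s_3)^2}{\bigl(s_1^2+s_2^2+s_3^2-2s_1s_2-2s_2s_3-2s_3s_1\bigr)^2},
\]
and then substitutes the $(h,s)$ parametrization. You instead exploit the identification $X_9(T)=X_6(T_e)$, apply the barycentric power formula $-3\tilde s_1^2\tilde s_2^2\tilde s_3^2/(\sum\tilde s_i^2)^2$ directly in the excentral triangle, and convert via $\tilde s_i=4R\cos(A_i/2)$ together with $\prod\cos(A_i/2)=s/(4R)$ and $\sum\cos A_i=1+r/R$. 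This buys you two things the paper's proof does not: the intermediate expression $\P_3=-12s^2/(4+r/R)^2$ depends only on the known confocal invariants $s$ and $r/R$, so invariance is visible \emph{before} any parametrization is invoked; and the derivation is fully transparent rather than a black-box symbolic computation. The final identification with $-(a^2+b^2+2\delta)$ via $4+\rho=(9-h^2)/2$, $a^2+b^2=8s^2(3-h^2)/(9-h^2)^2$, and $\delta=4s^2(3+h^2)/(9-h^2)^2$ is identical in both arguments (and your concern about the perfect-square factorization is unfounded: $64(3-h^2)^2-48(1-h^2)(9-h^2)=16(3+h^2)^2$ checks out directly).
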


\begin{proof}

In the billiard family, straightforward computation of power of $O=X_9$ with respect to Bevan circle (circumcircle of excentral triangle) gives:

\[ \P_3 = \frac{ -12 (s_1 s_2 s_3)^2 }{ (s_1^2 + s_2^2 + s_3^2 - 2 s_1 s_2 - 2 s_2 s_3 - 2 s_3 s_1)^2 }\]

or

\[ \P_3 = \frac{ -48 s^2 }{ (9 - h^2)^2 } =  -a^2-b^2-2\delta \]

\end{proof}

\begin{figure}
    \centering
    \includegraphics[width=.8\textwidth]{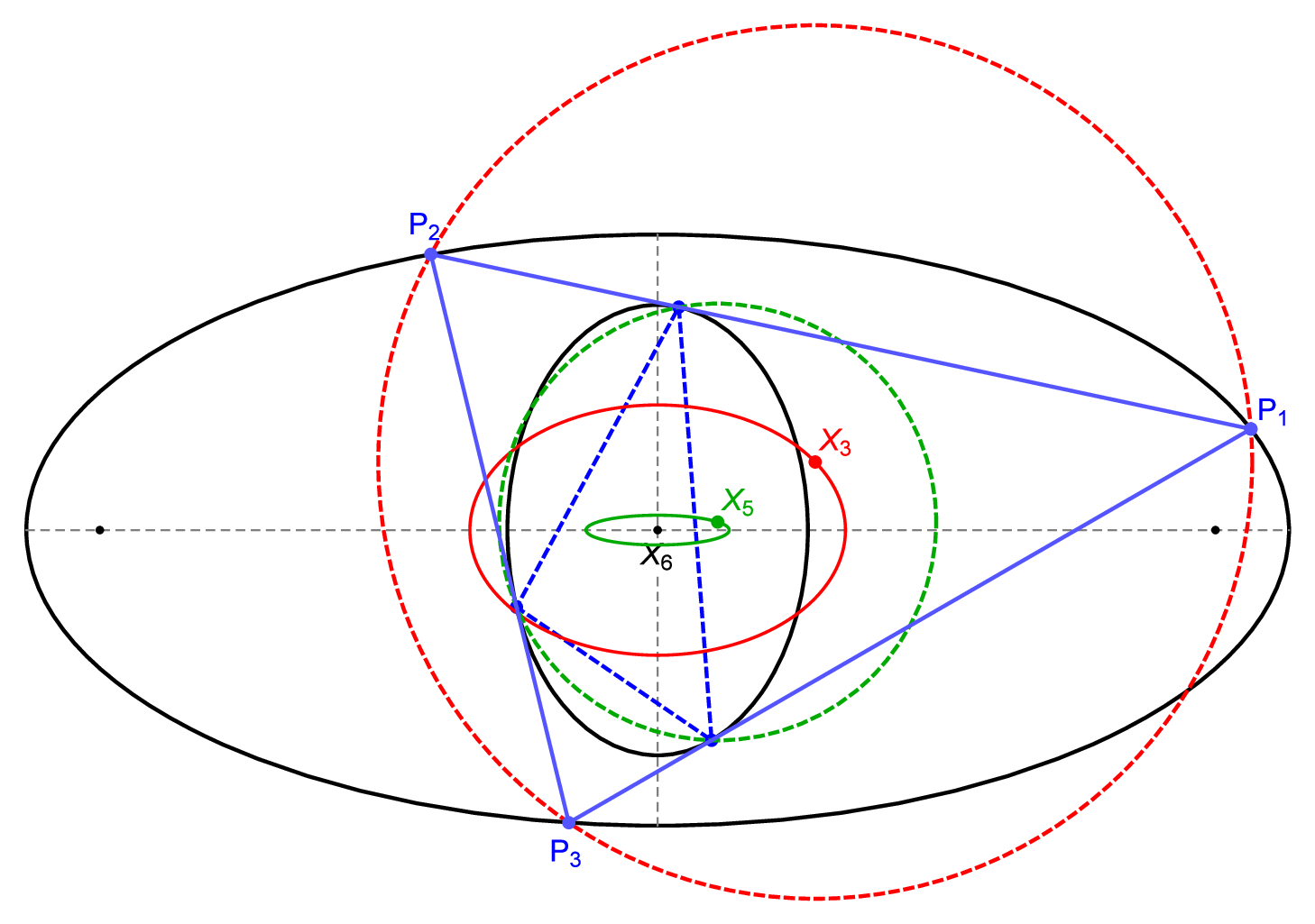}
    \caption{The excentral family (solid blue) has an elliptic billiard as its caustic, and billiard 3-periodics (dashed blue) as orthic triangles. The power of the symmedian point $X_6$, stationary at the center, with respect to circumcircle (dashed red) and Euler circle (dashed green) is invariant. Also shown are the elliptic loci of their centers $X_3$ and $X_5$ (solid red, solid green), respectively. \href{https://bit.ly/3tV16iY}{live}}
    \label{fig:excentral}
\end{figure}

Note that $\P_3$ for the excentral family is equivalent to the power of the power of the center with respect to the Bevan circle in the confocal family (circumcircle of excenters \cite[Bevan Circle]{mw}).

\subsection*{Summary}

Table~\ref{tab:concentric-summary} summarizes some results in this section. As it will be seen in the next Section, these are special cases of Theorem~\ref{thm:power-concentric-unaligned}.

\begin{table}
\centering
\begin{tabular}{|r|c|l|c|c|}
\hline
family & $N=3$ Cayley & $\P_3$ & $\P_5$ \\
\hline
incircle & $r=(ab)/(a+b)$ & $-ab$ &  $-a b (a^2 + b^2)/(2(a + b)^2)$  \\
circumc. & $a_c+b_c=R$ & $-R^2$ & $-a_c b_c$ \\
homoth. & $a_c=a/2,b_c=b/2$  & $-({a^2+b^2})/{2}$ & $-({a^2+b^2})/{8}$  \\
confocal & see \eqref{eqn:confocal-axes} & $-\delta$ & ${\delta \mu \eta (\mu^2+\eta^2-2)}/{(\mu^2+\eta^2+1)}$\\
excentral & see \eqref{eqn:excentrals} & $-a^2-b^2-2\delta$ & $-\delta$ \\
\hline
\end{tabular}
\caption{Summary of invariant power of origin wrt to circumcircle and Euler's circle for various concentric, axis-aligned systems. Recall $\mu=a/a_c$ and $\eta=b/b_c$.}
\label{tab:concentric-summary}
\end{table}


In Appendix~\ref{app:four-more} we show several additional circle-family combinations over which the power of the center is invariant.

\section{Concentric and Unaligned}
\label{sec:concentric-tilted}
We generalize the previous results by showing that over Poncelet 3-periodics in concentric, non-axis-aligned ellipse pairs, the power of the center with respect to either circumcircle or Euler's circle
is still invariant. This holds despite (i) their centers moving along non-axis aligned ellipses and (ii) their radii being variable.

The Cayley condition for the concentric version ($O_c=O$) of the pair in Figure~\ref{fig:n3-general-pos} which admits a 3-periodic family reduces to \cite{dragovic11}: 


\[ a^2 b^2+\cos^2{\theta}~c^2 (a_c^2-b_c^2)-(a a_c+b b_c)^2 = 0\]

Note $\cos^2{\theta}$ can be expressed in terms of $a,b,a_c,b_c$:


\[ \cos^2{\theta} =  \frac{(a a_c+b b_c)^2-a^2 b^2}{c^2 (a_c^2-b_c^2)} \]

As shown ion Figure~\ref{fig:cos-limits}, the feasible region of $a_c,b_c$ lies between to lines. Note that when $\theta=0$ the above reduces to:

\[ \frac{a_c}{a}+\frac{b_c}{b}=1 \]

\begin{figure}
    \centering
    \includegraphics[width=.5\textwidth]{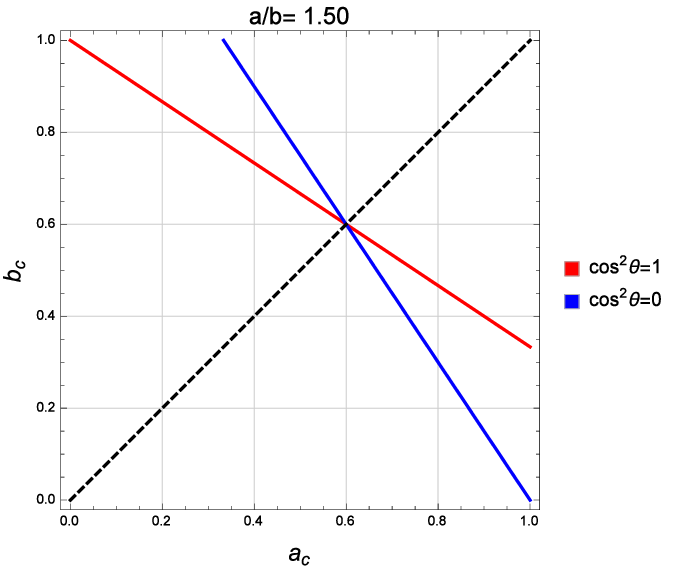}
    \caption{For a given choice of $a$ and $b$ (in the figure, $a=1.5$, $b=1$, the feasible region for $a_c$ and $b_c$ lies between the limit lines where $\cos^2(\theta)$ is 0 or 1, respectively.}
    \label{fig:cos-limits}
\end{figure}

Referring to Figure~\ref{fig:n3-tilted-circum-euler}, consider the family of 3-periodics interscribed between a concentric pair of ellipses $\E$ and $\E_c$, at an angle $\theta$ with each other.
Consider the pair of ellipses:

\begin{align*}
   \E&:  (b^2 + c^2)x^2  - 2 a c x y  + a^2 y^2 - b^2 a^2=0 \\
   \E_c&: \frac{x^2}{a_c^2}+\frac{y^2}{b_c^2}-1=0\\
\end{align*} 
where $-c^2a_c^2 + (ab + a b_c + a_c b)(ab - ab_c - a_c b)=0$.

\begin{proposition}
The locus of $X_3$ and $X_5$ are ellipses $\E_3$, and $\E_5$  which are concentric with the pair. Furthermore, $\E_3$ is axis-aligned with $\E_c$ and its aspect ratio is equal to $b_c/a_c$. $\E_3$ is given by:

\begin{align*}
%
\E_3:&\; 4\,{b}^{2}a_c^{2} \left( a_c^{2}{x}^{2}+b_c^{2}{y}^{2} \right)  
 -b_c^{2} \left(  \left( a_c^{2}-
b_c^{2} +{b}^{2}\right)^2 a^2 -4\,a_c^2b^2\, \left( a_c-b_c
 \right)^2   \right)    =0
\end{align*}
\label{prop:loci-x3-x5}

\noindent The axes of $\E_5$ are given by

\begin{align*}
    a_5&=\frac{\sqrt{2} (a a_c^2 b_c + a b^2 b_c - a b_c^3 - 2 a_c^3 b - 2 a_c b b_c^2)}{8 a_c^2 b}\\
    b_5&=\frac{\sqrt{2} (a a_c^2 - 4 a_c b b_c + a (b^2 - b_c^2))}{8 b a_c}
\end{align*} 
\end{proposition}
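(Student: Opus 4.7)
The plan is to establish the proposition by a direct parametric computation in the spirit of the CAS-assisted arguments of Section~\ref{sec:axis-aligned}. First, I exploit the shear normal form of the outer ellipse: completing the square in the given equation for $\E$ yields $(x - (c/a)y)^2/b^2 + y^2/a^2 = 1$, so points of $\E$ admit the rational parametrization $P(t) = (b\cos t + c\sin t,\; a\sin t)$. Taking $P_1 = P(t)$ as one vertex, the two tangent lines from $P_1$ to $\E_c$ are determined by a quadratic, and their second intersections with $\E$ give $P_2(t)$ and $P_3(t)$. The Cayley closure condition $-c^2 a_c^2 + (ab)^2 - (ab_c + a_c b)^2 = 0$ guarantees that this construction produces a closed 3-periodic for every admissible $t$.

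I then compute the circumcenter $X_3(t) = (x_3(t), y_3(t))$ as the intersection of two perpendicular bisectors, obtaining rational functions in $\cos t$ and $\sin t$. To confirm the locus equation, I substitute into the candidate $\E_3$ and verify on a CAS that the result vanishes identically modulo the Cayley identity, so the locus lies on $\E_3$; smoothness of the parametrization and a dimension count show that the full ellipse is traced. The quadratic part $4b^2 a_c^2(a_c^2 x^2 + b_c^2 y^2)$ has no $xy$-term, so $\E_3$ is axis-aligned with $\E_c$, and comparing the $x^2$ and $y^2$ coefficients gives semi-axes in the ratio $b_c/a_c$.

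For the nine-point center I use the Euler-line identity $X_5 = (3X_2 - X_3)/2$, where $X_2(t) = (P_1 + P_2 + P_3)/3$ is immediately available from the first step. This makes $X_5(t)$ again a rational function of $t$, and running the same implicitization procedure (or, equivalently, extracting the extremal values of $x_5(t)$ and $y_5(t)$ over one period) produces the semi-axes $a_5, b_5$ stated in the proposition. The principal obstacle throughout is symbolic complexity: the tangent-line intersections yield expressions with nested radicals, and only repeated application of the Cayley identity collapses them to the clean forms claimed. Working in the shear-adapted parametrization of $\E$ introduced above is what keeps the CAS computation tractable; without it the intermediate expressions for $X_3(t)$ are prohibitively large.
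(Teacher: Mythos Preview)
The paper states this proposition without an accompanying proof block; the explicit equations are evidently obtained by the same procedure described immediately afterward in the proof of Theorem~\ref{thm:power-concentric-unaligned} (``explicit parametrized expressions for the vertices \ldots\ laborious manual CAS-based simplification''). Your plan---parametrize $\E$ in shear form, compute $P_2,P_3$ from the tangency conditions, extract $X_3$ and then $X_5=(3X_2-X_3)/2$, and verify the implicit equation modulo the Cayley relation---is exactly this method, so there is nothing to compare: you are supplying the proof the paper omits, by the paper's own technique.

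One concrete slip to fix before you run the CAS: your completion of the square is transposed. From $(b^2+c^2)x^2 - 2acxy + a^2 y^2 = a^2 b^2$ one obtains $a^2\bigl(y-(c/a)x\bigr)^2 + b^2 x^2 = a^2 b^2$, i.e.\ $\bigl(y-(c/a)x\bigr)^2/b^2 + x^2/a^2 = 1$, so the parametrization should be $P(t)=(a\cos t,\; c\cos t + b\sin t)$ rather than $(b\cos t + c\sin t,\; a\sin t)$. This does not affect the strategy, but with the wrong parametrization the substitution into $\E_3$ will not collapse. It is also worth noting that the bare ellipticity of the $X_3$ and $X_5$ loci (though not the explicit formulas) is later recovered in the paper as a special case of Theorem~\ref{thm:ellipse-locus} via Blaschke products and Lemma~\ref{lem:ell-param}; that route avoids the nested radicals entirely but does not readily yield the closed forms stated here, so your direct computation remains necessary for the full statement.
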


\begin{observation}
For the special case where the pair is axis-aligned $(\theta=0)$ the expression for $\E_5$ is tractable, and given by:

\[\E_5: \; \frac{ 16 a^4x^2}{(a^3 - 3 a^2  a_c + a b^2 -  a_c b^2)^2}+\frac{16 b^2 a^2y^2}{ (a^2  a_c - 2 a b^2 + 3  a_c b^2)^2}-1=0\]
\end{observation}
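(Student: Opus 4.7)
The plan is to specialize to the axis-aligned case and exploit symmetry to avoid the algebraic overhead of Proposition~\ref{prop:loci-x3-x5}. When $\theta = 0$ both ellipses are in standard position, $\E: x^2/a^2 + y^2/b^2 = 1$ and $\E_c: x^2/a_c^2 + y^2/b_c^2 = 1$, and the concentric Cayley condition given at the start of Section~\ref{sec:concentric-tilted} collapses to the linear relation $b_c = b(a - a_c)/a$.

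Because the pair is invariant under the reflections $(x,y) \mapsto (-x,y)$ and $(x,y) \mapsto (x,-y)$, the family of Poncelet 3-periodics is closed under these reflections, and the locus of the triangle center $X_5$ inherits both symmetries. Combined with Proposition~\ref{prop:loci-x3-x5} guaranteeing an elliptic locus, this forces $\E_5$ to have the form $x^2/a_5^2 + y^2/b_5^2 = 1$. To extract $a_5$ it then suffices to evaluate the $x$-coordinate of $X_5$ at the unique axis-symmetric 3-periodic with vertex $P_1 = (a, 0)$: the chord $P_2 P_3$ is vertical and tangent to $\E_c$ at $(-a_c, 0)$, so $P_{2,3} = (-a_c, \pm y_0)$ with $y_0$ determined by the condition $P_2 \in \E$. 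A direct computation of the circumcenter $X_3 = (x_3, 0)$ and orthocenter $X_4 = (x_4, 0)$ in closed form, followed by $x_5 = (x_3 + x_4)/2$ and substitution of the Cayley relation, should yield $a_5 = |a^3 - 3 a^2 a_c + a b^2 - a_c b^2|/(4 a^2)$; the analogous computation starting from the vertex $(0, b)$ produces $b_5$.

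The main obstacle is the routine polynomial simplification needed at the end to match the stated quartic denominators $16 a^4$ and $16 a^2 b^2$. Since the two extremal 3-periodics involved are entirely explicit, no elliptic integrals or transcendental parametrizations enter, and the simplification is elementary; it can be carried out by hand or verified by a brief CAS computation. As an independent cross-check, substituting $b_c = b(a - a_c)/a$ into the general formulas for $a_5, b_5$ given in Proposition~\ref{prop:loci-x3-x5} and clearing the common factor of $a_c^2$ should produce the same numerators, modulo the change-of-coordinates factor needed to reconcile the tilted frame underlying the Proposition with the axis-aligned frame used in the observation.
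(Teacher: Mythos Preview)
Your approach is correct and in fact supplies more detail than the paper does: the paper states this Observation without proof, treating it as a specialization of the general semi-axis formulas for $\E_5$ recorded in Proposition~\ref{prop:loci-x3-x5}, which were obtained by ``laborious manual CAS-based simplification.''

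Your route is genuinely different and more elementary. Rather than specializing the tilted-frame formulas and then untangling the coordinate change, you exploit the two reflection symmetries of the axis-aligned pair to force $\E_5$ into the form $x^2/a_5^2+y^2/b_5^2=1$, and then read off $a_5$ and $b_5$ from the two isosceles 3-periodics with apices at $(a,0)$ and $(0,b)$. The key logical point---that for these symmetric triangles $X_5$ lands on a coordinate axis, hence at a vertex of the locus ellipse---is sound once Proposition~\ref{prop:loci-x3-x5} guarantees the locus is an ellipse. Carrying out the circumcenter/orthocenter computation for the triangle $(a,0),(-a_c,\pm y_0)$ with $y_0^2=b^2(a^2-a_c^2)/a^2$ gives
\[
x_5=\frac{x_3+x_4}{2}=\frac{a^3-3a^2a_c+ab^2-a_cb^2}{4a^2},
\]
exactly matching $a_5$; the $(0,b)$ triangle together with the Cayley relation $b_c=b(a-a_c)/a$ yields $b_5=(a^2a_c-2ab^2+3a_cb^2)/(4ab)$. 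So the ``routine polynomial simplification'' you flag as the only obstacle is indeed short and entirely by hand. The cross-check you propose against the Proposition's $a_5,b_5$ is a nice sanity test, and your caveat about the change of frame (there $\E_c$ is axis-aligned, here $\E$ is) is the right thing to watch for.
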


Now we are in a position to prove our first main result:

\begin{theorem}
The power of the common center $O$ is invariant with respect to either the circumcircle or the Euler circle and given by:

\begin{align*}
    \P_3  &=-\frac{a b_c}{b a_c}\left(b^2 + a_c^2 - b_c^2 \right)- (a_c^2 - b_c^2) \\
    \P_5&= -\frac{a b_c}{2 b a_c}\left(b^2 + a_c^2 - b_c^2 \right)+b_c^{2}
\end{align*}

\label{thm:power-concentric-unaligned}
\end{theorem}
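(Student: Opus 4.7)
The plan is to proceed by direct symbolic computation, mirroring the CAS-based strategy announced in the introduction. First I would parametrize the family by a single variable: fix one vertex at $P_1(t)=(a\cos t,\,b\sin t)$ on $\E$, and obtain $P_2(t),\,P_3(t)$ as the second intersections with $\E$ of the two tangent lines drawn from $P_1$ to $\E_c$. Poncelet closure holds by the Cayley hypothesis $-c^2 a_c^{2} + (ab + a b_c + a_c b)(ab - a b_c - a_c b)=0$, so this indeed produces a 3-periodic for every $t$. From the vertex coordinates I compute the circumcenter $X_3(t)$ and squared circumradius $R^2(t)$ by the standard perpendicular-bisector / determinant formulas, and recover $X_5(t)$ either as the circumcenter of the medial triangle or from the Euler-line identity $X_5=(X_3+X_4)/2$.

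Next I form the two target expressions $\P_3(t)=|X_3(t)|^2-R^2(t)$ and $\P_5(t)=|X_5(t)|^2-R^2(t)/4$, and ask the CAS to show $\partial_t\P_3\equiv 0$ and $\partial_t\P_5\equiv 0$ as rational identities modulo the Cayley relation (treating $\cos^2\theta$ as a symbol determined by $a,b,a_c,b_c$). Once constancy is certified, evaluating at a convenient parameter value (e.g.\ the symmetric configuration when $P_1$ lies on a principal axis of $\E$) pins down the closed forms stated for $\P_3$ and $\P_5$.

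A cleaner variant, which I expect to keep the expressions manageable, exploits Proposition~\ref{prop:loci-x3-x5}: since $X_3$ already lies on the explicit ellipse $\E_3$ axis-aligned with $\E_c$ and of aspect ratio $b_c/a_c$, one may parametrize $X_3(u)=(a_3\cos u,\,b_3\sin u)$ in the frame of $\E_c$. The problem then reduces to showing that $R^2(u)-|X_3(u)|^2$ is $u$-independent, which only requires expressing $R$ along the $X_3$-locus rather than re-deriving $X_3$ from scratch; the analogous reduction works for $\P_5$ via $\E_5$. In either route the main obstacle is symbolic: the two tangents from $P_1$ to the tilted $\E_c$ introduce a square root whose discriminant must be recognized as a perfect square (using the Cayley relation) in order to clear all $t$-dependence, and the Cayley relation itself is a nontrivial polynomial constraint that has to be applied repeatedly during simplification. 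Managing this — either by Gröbner-basis reduction modulo the Cayley ideal or by pre-solving for one of $a_c,b_c,\cos^2\theta$ — is where the computational weight of the proof will lie.
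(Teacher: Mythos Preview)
Your proposal is essentially the same approach as the paper's own proof: parametrize the 3-periodic vertices explicitly, compute $X_3$, $X_5$, and $R$ as functions of the parameter, form $\P_3$ and $\P_5$, and then rely on CAS simplification modulo the Cayley relation to eliminate the parameter. The paper's proof is in fact even terser than yours---it simply reports ``explicit parametrized expressions'' followed by ``laborious manual CAS-based simplification''---so your write-up, including the derivative-vanishing tactic and the Gr\"obner-basis remark, is if anything a more detailed account of the same computation; the ``cleaner variant'' via Proposition~\ref{prop:loci-x3-x5} is not pursued in the paper and would not save as much as you hope, since knowing the locus of $X_3$ does not by itself give $R$ along that locus.
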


\begin{proof}
We write explicit parametrized expressions for the vertices of 3-periodics in a generic concentric pair. We write out explicit expressions for circumcenter, Euler center, and circumradius and finally the power of the center with respect to these. Via a process of laborious manual CAS-based simplification, we arrive at the result.
\end{proof}

\noindent Notice that $2\P_5-\P_3=a_c^2+b_c^2$.

\begin{observation}
When the ellipses are concentric and axis-aligned, Theorem~\ref{thm:power-concentric-unaligned} reduces to:

\[ \P_3  = -\frac{a_c}{a}c^2 - b^2,\;\;\;\P_5 =-\frac{{a_c} (a-{a_c}) \left(a^2+b^2\right)}{2 a^2}\]
\end{observation}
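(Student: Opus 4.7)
The plan is to obtain the Observation as a direct corollary of Theorem~\ref{thm:power-concentric-unaligned} by specializing to $\theta=0$. As already recorded in this section, in the axis-aligned case the Cayley condition $-c^2 a_c^2 + (ab + a b_c + a_c b)(ab - a b_c - a_c b) = 0$ collapses to $a_c/a + b_c/b = 1$, which I would solve as $b_c = b(a-a_c)/a$. Substituting this single relation into the general formulas for $\P_3$ and $\P_5$ is the entire strategy; the geometric content has already been carried out in the theorem.

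First, I would compute $\P_3 = -\frac{ab_c}{ba_c}(b^2+a_c^2-b_c^2)-(a_c^2-b_c^2)$ under the substitution. The prefactor $ab_c/(ba_c)$ collapses to $(a-a_c)/a_c$. Expanding and collecting over the common denominator $a a_c$, the two contributions involving $b^2$ (one from the prefactor times $b^2$, one from the $-b_c^2$ terms) combine to $-b^2(a-a_c)/a$, while the remaining purely-$a_c$ terms telescope to $-aa_c$. Rewriting $-aa_c - b^2(a-a_c)/a = -\tfrac{a_c}{a}(a^2-b^2) - b^2$ and recalling $c^2 = a^2-b^2$ gives the claimed expression for $\P_3$.

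For $\P_5 = -\frac{ab_c}{2ba_c}(b^2+a_c^2-b_c^2) + b_c^2$, I would pull the common factor $-(a-a_c)/(2a_c)$ out front so that everything collects into a single bracket
\[
\P_5 = -\frac{a-a_c}{2a_c}\left[b^2 + a_c^2 - \frac{b^2(a^2-a_c^2)}{a^2}\right],
\]
where I have used $1 - (a-a_c)(a+a_c)/a^2 = a_c^2/a^2$ to absorb the leftover $b_c^2 = b^2(a-a_c)^2/a^2$ term. The bracket then factors as $(a_c^2/a^2)(a^2+b^2)$, so $\P_5 = -a_c(a-a_c)(a^2+b^2)/(2 a^2)$, matching the Observation.

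The main obstacle is purely bookkeeping: making sure the $b^2$-contributions combine with the correct sign in $\P_3$, and that the $b_c^2$ tail combines exactly right with $b^2+a_c^2-b_c^2$ in $\P_5$ to produce the clean factor $a_c^2/a^2$. No additional geometric ingredient is needed beyond Theorem~\ref{thm:power-concentric-unaligned} and the axis-aligned Cayley reduction already in hand.
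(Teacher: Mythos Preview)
Your proposal is correct and is exactly the intended derivation: the paper states this Observation without proof, treating it as an immediate specialization of Theorem~\ref{thm:power-concentric-unaligned} under the axis-aligned Cayley relation $a_c/a+b_c/b=1$. Your explicit substitution $b_c=b(a-a_c)/a$ and the subsequent simplifications are all valid, including the key step where the bracket in $\P_5$ collapses to $(a_c^2/a^2)(a^2+b^2)$.
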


\begin{figure}
    \centering
    \includegraphics[width=.7\textwidth]{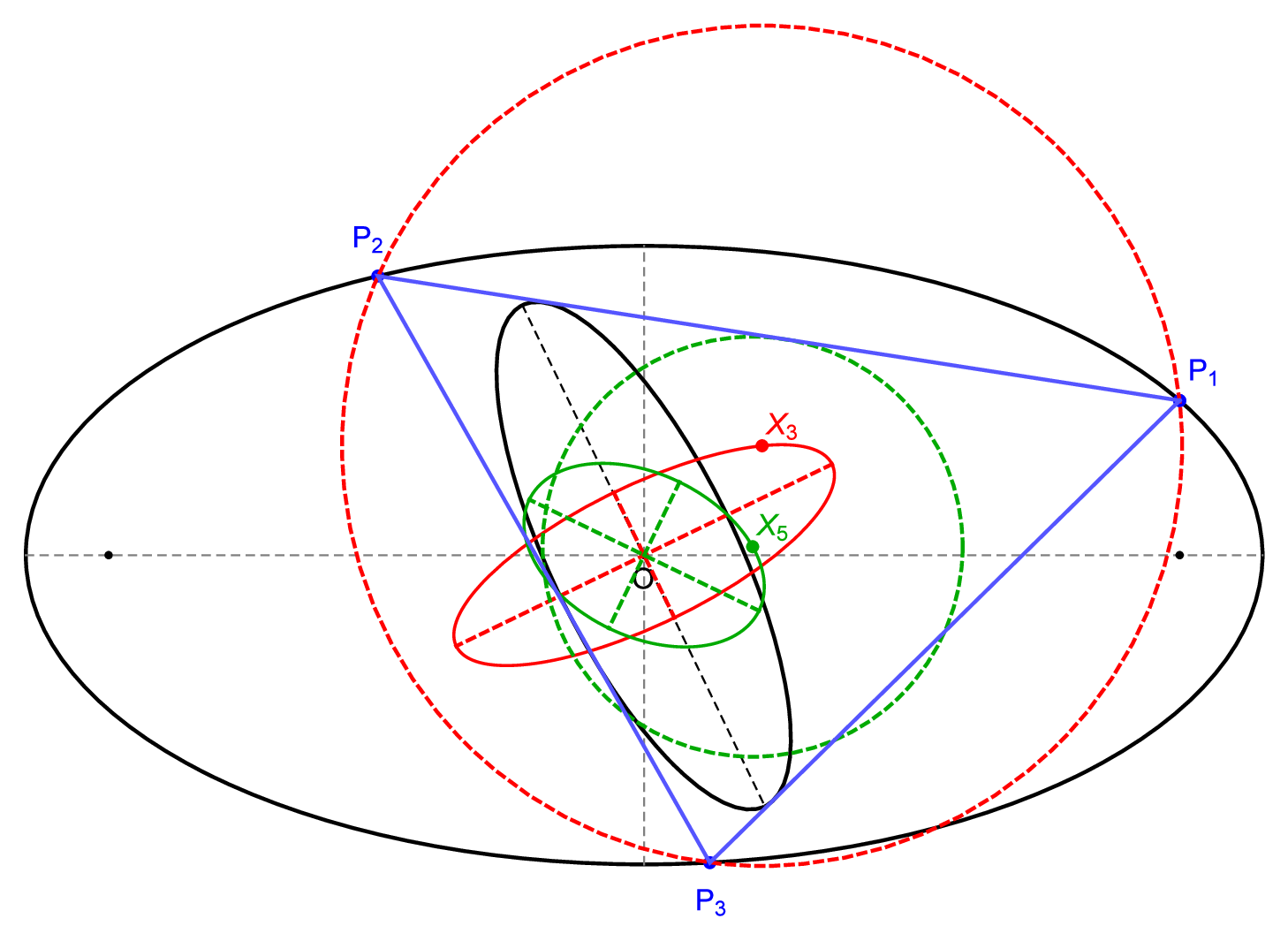}
    \caption{Over the family of 3-periodics (blue) interscribed in a pair of concentric, unaligned ellipses, the locus of $X_3$ and $X_5$ are also ellipses (red and green) which are concentric with the original pair. Furthermore, the locus of $X_3$ is axis-aligned with the inner ellipse. Remarkably, the power of the common center $O$ with respect to either the circumcircle (dashed red) or Euler's circle (dashed green) is invariant.}
    \label{fig:n3-tilted-circum-euler}
\end{figure}

Referring to Figure~\ref{fig:concentric-xns}, consider the concentric, unaligned pair of ellipses $\E$ and $\E_c$ given by:

\[ 
    \E:\,\frac{x^2}{a^2}+\frac{y^2}{b^2}-1=0,\;\;\;
    \E_c:\,(b_c^2 + \zeta^2)x^2  - 2 a_c \zeta x y  + a_c^2 y^2 - b_c^2 a_c^2=0\\
\]

where $\zeta$ is defined relative to $\theta$ as follows:

\[\tan{2\theta}=\frac{2 a\zeta}{c^2 - \zeta^2}\]

Furthermore the Cayley condition reduces to:

\[a^2 \zeta^2 - (a b + a b_c + a_c b)(a b - a b_c - a_c b)=0\]

\begin{figure}
     \includegraphics[width=\textwidth]{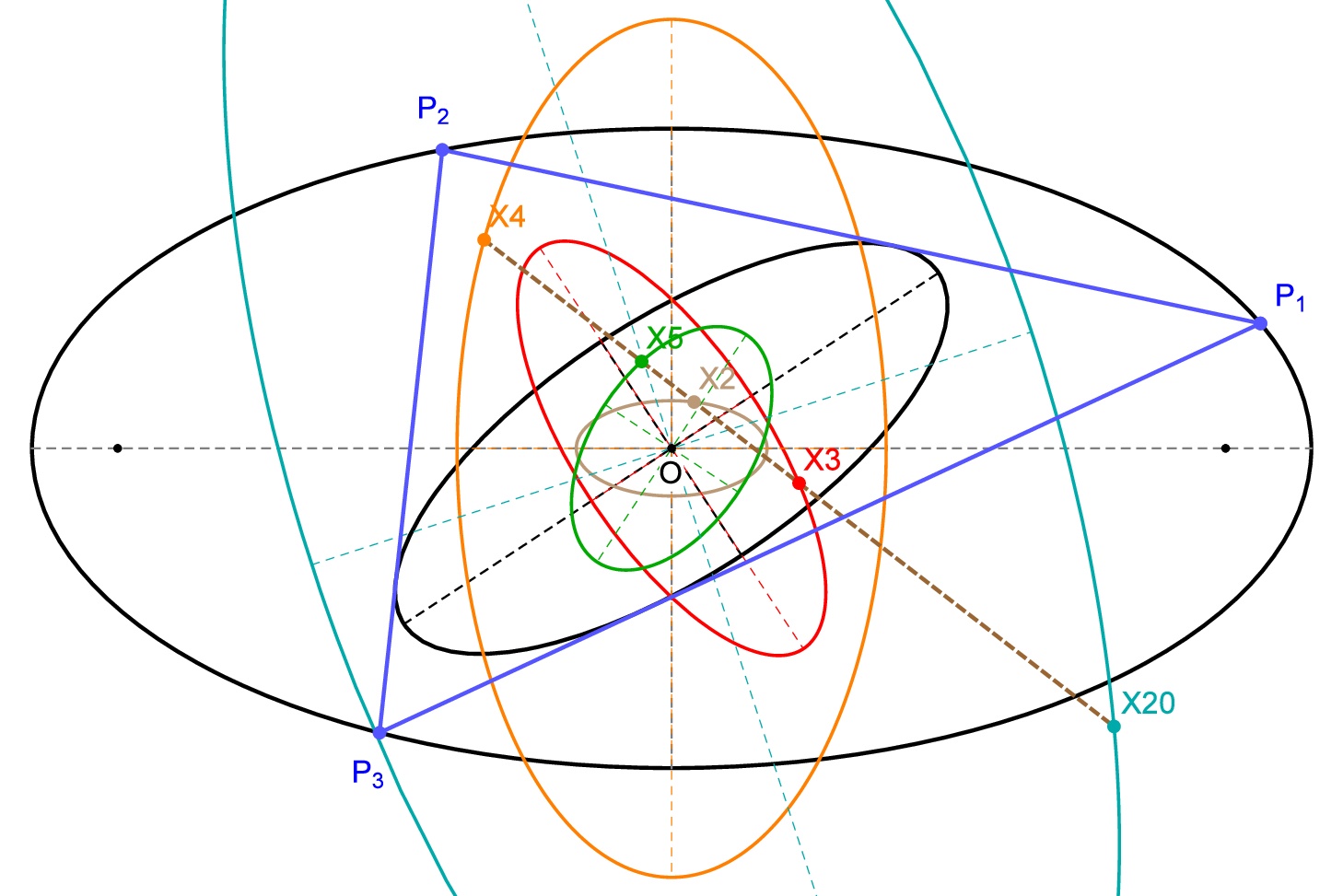}
     \caption{In the concentric-tilted pair (black ellipses), the loci of $X_k$, $k=2,3,4,5,20$ are ellipses. The Euler line (dashed brown) connect said centers. \href{https://youtu.be/hpb7ZgKWjUY}{Video}}
     \label{fig:concentric-xns}
 \end{figure}

\subsection*{Generalizing invariant center power}

Recall a pencil of coaxial circles has collinear centers and identical limiting points \cite[Coaxal Circles]{mw}.

The radical axis of the pencil of circles containing the circumcircle and Euler's circle is known as the ``orthic axis'' and it is perpendicular to the Euler line\footnote{The Euler line and the orthic axis intersect at $X_{468}$.} \cite[Orthic Axis]{mw}.

\begin{proposition}
Over Poncelet 3-periodics in a concentric pair (axis-aligned or not) with center at $O$, let $\C$ be a circle coaxial with both circumcenter and Euler's circle, such that its center $X$ is a fixed affine combination of $X_3$ and $X_5$. The power of $O$ with respect $\C$ is invariant.
\label{prop:coaxial}
\end{proposition}

\begin{proof}

Let $\P(M,\C_0)$ denote the power of point $M\in\R^2$ with respect to a circle $\C_0$.

Let $\C_3$ (resp. $\C_5$) denote the circumcircle (resp. Euler's circle) of 3-periodics in a concentric pair centered on $O$. Let $X_3$ and $R$ (resp. $X_5$ and $r_5=R/2$) denote the center and radius of $\C_3$ (resp. $\C_5$). Let $r_X$ denote the radius of $\C$, a circle coaxial with $\C_3$ and $\C_5$, centered at $X$.

Let $t$ be a fixed real number, so that $X=t X_3+(1-t)X_5$ is a fixed affine combination $X_3$ and $X_5$. By Theorem \ref{thm:power-concentric-unaligned}, $\P(O,\C_3)$ and $\P(O,\C_5)$ are both invariant over 3-periodics in a concentric pair.

In the remainder of this proof we show that in fact:

\[ \P(O,\C)=t\P(O,\C_3)+(1-t)\P(O,\C_5)\]
which entails our claim.

Take a single 3-periodic in the aforementioned family. Since both statements $X=t X_3+(1-t)X_5$ and $\P(O,\C)=t\P(O,\C_3)+(1-t)\P(O,\C_5)$ are invariant under rotations and translations of the plane, assume without loss of generality that the line through $X_3$, $X_5$, and $X$ (Euler line) is the x-axis and that the common radical axis of $\C_3$, $\C_5$, and $\C$ (orthic axis) is the y-axis. Let $T$ be the origin of this coordinate system ($X_{468}$ of the 3-periodic triangle).

Let $X_3=(x_3,0)$ and $X_5=(x_5,0)$ so that $X=(t x_3+(1-t)x_5,0)$. Also, let $O=(u,v)$ in this new coordinate system. For an arbitrary point $P=(x_P,0)$ on the line through $X_3$, $X_5$, we denote by $\C_P$ the circle centered at $P$ that is coaxial with $\C_3$ and $\C_5$, and its radius by $r_P$.

Since $T$ is on the radical axis of the pencil of coaxial circles of $\C_3$ and $\C_5$, its power with respect to any of the circles in this pencil is the same, so we denote it as $\P_T$. See that 
\begin{gather*}
    \P(O,\C_P)=|O P|^2-{r_P}^2= (u-x_P)^2+v^2-{r_P}^2=\\
    =(u^2+v^2)+(x_P^2-r_P^2)-2 u x_P= \|O\|^2+\P_T-2 u x_P
\end{gather*}

Using this identity with $P=X_3$, $P=X_5$, and $P=X$, we get
\begin{gather*}
    t\P(O,\C_3)+(1-t)\P(O,\C_5)=\\
    =t(\|O\|^2+\P_T-2 u x_3)+(1-t)(\|O\|^2+\P_T-2 u x_5)=\\
    \|O\|^2+\P_T-2 u (t x_3+(1-t)x_5)=\P(O,\C)
\end{gather*}
\end{proof}

An illustration to Proposition \ref{prop:coaxial} is provided by a well-known group of circles coaxial with the circumcircle and the Euler circle, listed in  Table~\ref{tab:coaxal-circles} and shown in Figure~\ref{fig:coaxal-power}.

\begin{table}
    \centering
    \begin{tabular}{|r|c|c|}
    \hline
    name & center & squared radius \\ \hline
        Circumcircle & $X_5$ & $R^2$ \\
        Euler's circle & $X_3$ & $R^2/4$ \\
        Steiner orthoptic circle & $X_2$ & ${\sum{s_i}^2}/18$ \\
        Orthocentroidal & $X_{381}$ & $R^2-\sum{s_i}^2/9$ \\
        Polar$^\dagger$ circle & $X_4$ & $4 R^2-(\sum{s_i}^2)/2$ \\
        Tangential$^\ddagger$ circle& $X_{26}$ & ${R^2}/({16|\prod{\cos\theta_i}|^2})$ \\
        \hline
    \end{tabular}
    \caption{Six coaxial circles with centers on the Euler line \cite[Coaxal System]{mw}. $^\dagger$The squared radius of the polar circle is positive (resp. negative) for obtuse (resp. acute) triangles. Its signed squared radius is used when computing circle power. $^\ddagger$The tangential circle is the only in the list whose center is not a fixed linear combination of $X_3$ and $X_5$.}
    \label{tab:coaxal-circles}
\end{table}

\begin{observation}
The power of the center wrt to the coaxial circles listed on Table~\ref{tab:coaxal-circles} is invariant, with the exception of the tangential circle, since the latter's center is not a fixed linear combination of $X_3$ and $X_5$.
\label{obs:coaxial}
\end{observation}

\begin{figure}
    \centering
    \includegraphics[width=\textwidth]{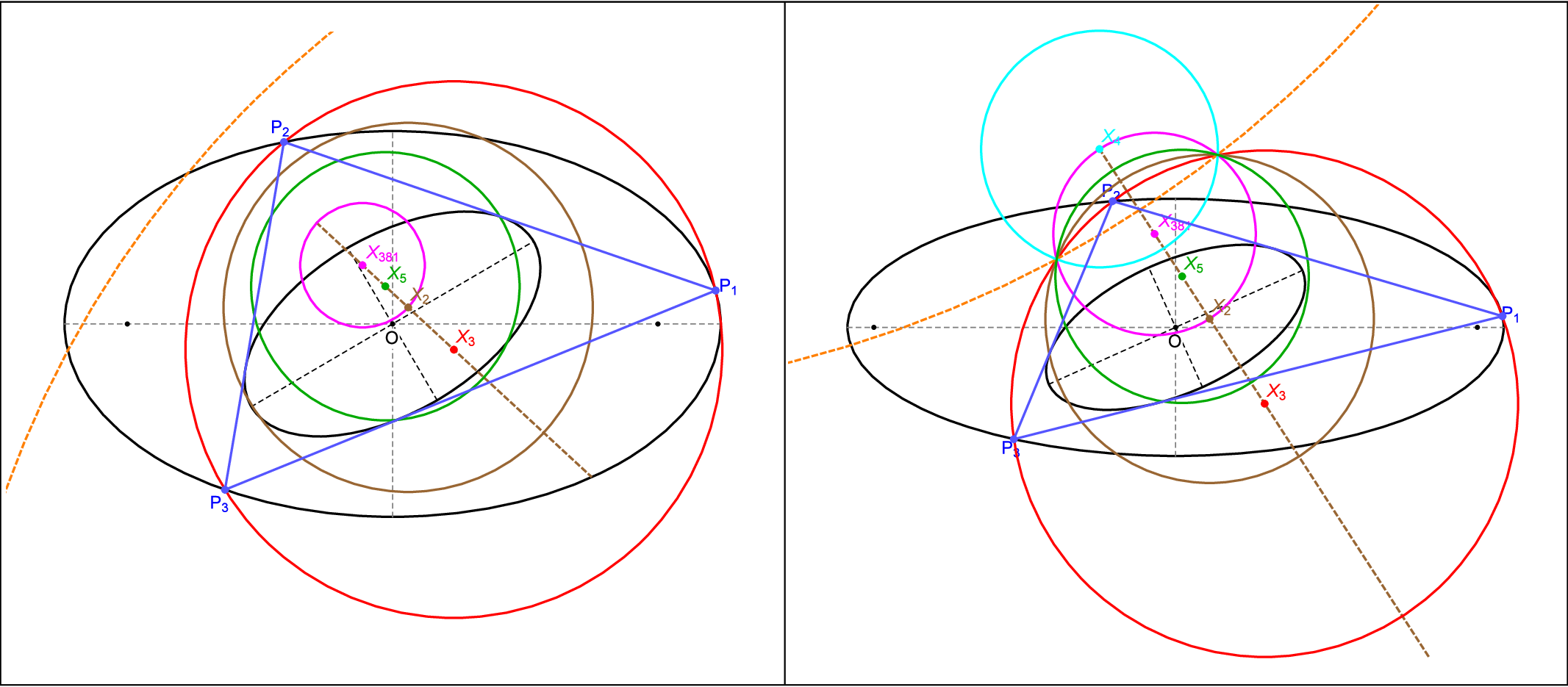}
    \caption{\textbf{Top:} An acute 3-periodic (blue) is shown in a concentric, unaligned pair. The power of the center is invariant with respect to the following set of coaxial circles whose centers lie at linear combinations of $X_3$ and $X_5$: circumcircle (red), Euler's (green), Steiner's inellipse orthoptic (brown), orthocentroidal (magenta), centered on $X_k$, $k=$3,5,2,381, respectively. The power of the center wrt to the also coaxial tangential circle (dashed orange) is variable as expected, since its center $X_{26}$ is not a fixed linear combination of $X_3$ and $X_5$. \textbf{Bottom:} an obtuse 3-periodic (blue) is shown in a concentric, unaligned pair. All coaxial circles now intersect at two common points. The polar circle (cyan) is now defined, centered on $X_4$. The power of the center wrt to all circles shown (except for the tangential, dashed orange) is constant.}
    \label{fig:coaxal-power}
\end{figure}

\begin{proposition}
Over 3-periodics in the concentric, non-axis aligned ellipse pair, the loci of $X_2$ and $X_4$ are ellipses concentric and axis-aligned with the outer one, given by:

\begin{align*}
X_2:& \; 9b^2x^2 + 9a^2y^2 + ab(4a_1b_1 - ab)=0\\
X_4:&\; (a^4 - b^4)a_1^2 - 2ab(a^2 + b^2)a_1b_1 + a^2b^4 - a^2(a^2x^2 + b^2y^2)=0
\end{align*}
\end{proposition}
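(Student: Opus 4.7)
The plan is to reduce the claim to an explicit parametric verification, mirroring the strategy already used for Theorem~\ref{thm:power-concentric-unaligned}. I would begin from the rational parametrization of the vertices $P_1, P_2, P_3$ of 3-periodics in the concentric, tilted pair invoked in that earlier proof: each vertex is a rational function of a single parameter whose coefficients are polynomials in $a, b, a_c, b_c$ constrained by the Cayley closure condition displayed earlier in this section.

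For the locus of $X_2$, the barycenter is simply $(P_1+P_2+P_3)/3$. Substituting this into the left-hand side of the claimed equation and clearing denominators yields a polynomial in the parameter, which one verifies via CAS to be identically zero modulo the Cayley relation. The nontrivial content is the vanishing of the $xy$ cross term, i.e.\ the axis-alignment of the locus with the outer ellipse $\E$; this is not forced by the central symmetry $(x,y)\mapsto(-x,-y)$ of the configuration alone, since $\E_c$ is tilted, so it must emerge from the computation. Once the identity is confirmed, the semi-axes are read off from the coefficients of $x^2$ and $y^2$.

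For the locus of $X_4$, I would exploit the Euler line identity $X_4 = 3X_2 - 2X_3$, equivalently $X_4 = P_1 + P_2 + P_3 - 2 X_3$. The circumcenter $X_3$ is already available as a rational function of the parameter either from Proposition~\ref{prop:loci-x3-x5} or by computing the circumcenter of $P_1P_2P_3$ directly from the vertex parametrization. Forming $X_4$ as a linear combination and substituting into the proposed quadratic equation should reduce, after applying the Cayley relation, to an identity that CAS confirms. In spirit this is the concentric analogue of the nonconcentric result in Section~\ref{sec:nonconcentric-tilted}, where any fixed affine combination of $X_2$ and $X_3$ is shown to have an elliptic locus.

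The main obstacle is symbolic bookkeeping: the vertex parametrization in the tilted, concentric setting is already unwieldy, and confirming axis-alignment demands that the Cayley condition be invoked at the right moment in the simplification, lest the $xy$ coefficient appear to be a nonzero polynomial in the parameter. A clean way to organize the calculation is to verify the $X_2$ and $X_3$ identities separately as polynomial identities in the parameter modulo Cayley, and then assemble $X_4$ linearly; this separates the two algebraic burdens and keeps each verification within reach of a CAS.
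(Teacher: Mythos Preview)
The paper states this proposition without proof, so there is no argument to compare against directly. Your plan---parametrize the vertices rationally, form $X_2=(P_1+P_2+P_3)/3$ and $X_4=3X_2-2X_3$, substitute into the two asserted quadrics, and reduce to zero modulo the Cayley relation---is sound and is exactly the CAS-verification style the paper uses for the neighboring results (Proposition~\ref{prop:loci-x3-x5} and Theorem~\ref{thm:power-concentric-unaligned}). One caution about your last paragraph: verifying separately that $X_2$ and $X_3$ each lie on some conic does \emph{not} let you ``assemble $X_4$ linearly'' at the level of the locus equations, since the sum of two elliptic motions need not be elliptic unless they share a common uniformizing parameter. You seem to intend this anyway (compute $X_4$ as a point, then substitute), but the wording invites misreading.

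There is also a cleaner, almost computation-free route that you gesture at but do not exploit. The affine map $(x,y)\mapsto(x/a,y/b)$ sends the concentric pair to a concentric pair with outer circle, so in the Blaschke model of Section~\ref{sec:nonconcentric-circumcircle} the inner foci satisfy $g=-f$. Specializing the $u,v,w$ of Theorem~\ref{thm:ellipse-locus} to $g=-f$ gives $w=0$ immediately (concentricity), and a two-line check shows that $uv\in\mathbb{R}$ for both $(\alpha,\beta)=(1,0)$ and $(\alpha,\beta)=(3,-2)$: for $X_2$ one finds $uv=pq|f|^4/9$, and for $X_4$ one finds $uv$ proportional to $(\bar f^2 A+B)(f^2 A+B)$ with $A,B$ real, which expands to $|f|^4A^2+2A B\,\mathrm{Re}(f^2)+B^2\in\mathbb{R}$. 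By Lemma~\ref{lem:ell-param} the locus is then axis-aligned with the outer ellipse, and the semi-axes $|u|\pm|v|$ recover the stated equations after undoing the scaling. This avoids the heavy symbolic bookkeeping you flag as the main obstacle.
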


\subsection*{Generalizing elliptic loci}

The following is a special case of Theorem~\ref{thm:ellipse-locus}, whose proof appears in Section~\ref{sec:nonconcentric-tilted}:

\begin{proposition}
If a triangle center $\X_\gamma=(1-\gamma) X_2+ \gamma X_3$ is a fixed affine combination of $X_2$ and $X_3$ for some $\gamma\in\R$, its locus over 3-periodics in the concentric, non-axis-aligned ellipse pair will be an ellipse.
\label{prop:concentric}
\end{proposition}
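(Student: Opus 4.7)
The plan is to produce a real parameter $t$ for the 3-periodic family under which both $X_2(t)$ and $X_3(t)$ are single-frequency trigonometric curves, i.e.\ expressions of the form $A\cos(\omega t) + B\sin(\omega t)$ sharing one common angular frequency $\omega$. Any affine combination of two such expressions is again of that form, hence its image is an ellipse centered at $O$.

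First I would adopt a Blaschke-type Poncelet parametrization, analogous to the one used in Section~\ref{sec:nonconcentric-circumcircle}, in which the three vertices $P_j(t)$ of the 3-periodic arise as cyclic shifts of a single lift $\Phi : \R \to \R^2$: $P_j(t) = \Phi(t + (j-1)\tau)$, where $\tau$ is the fixed Poncelet increment and $\Phi$ is a rational function of $e^{i\omega t}$ with $\omega = 2\pi/\tau$. Because $X_2$ and $X_3$ are both \emph{symmetric} functions of the vertex set, the cyclic shift $t \mapsto t + \tau$ sends the ordered triple $(P_1,P_2,P_3)$ to $(P_2,P_3,P_1)$, which represents the same unlabelled triangle, so $X_k(t + \tau) = X_k(t)$ for $k = 2,3$. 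Combined with the rational bounded-degree structure of $\Phi$, and with the fact, established in the preceding proposition, that the loci of both $X_2$ and $X_3$ are ellipses centered at $O$, this pins each center down to its fundamental mode:
\[ X_2(t) = A_2\cos(\omega t) + B_2\sin(\omega t), \qquad X_3(t) = A_3\cos(\omega t + \psi) + B_3\sin(\omega t + \psi), \]
where the phase offset $\psi$ encodes the tilt between the $X_2$- and $X_3$-loci induced by $\theta \neq 0$.

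Given these expressions, the affine combination
\[ \X_\gamma(t) \;=\; (1-\gamma)\,X_2(t) + \gamma\, X_3(t) \;=\; A_\gamma \cos(\omega t) + B_\gamma\sin(\omega t) \]
with $A_\gamma, B_\gamma$ depending linearly on $\gamma$, is again a single-frequency trigonometric curve and hence traces an ellipse centered at $O$ (degenerating to a segment for at most two exceptional values of $\gamma$ where $A_\gamma$ and $B_\gamma$ become parallel).

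The main obstacle is the frequency-synchronization step: rigorously excluding higher Fourier harmonics for $X_2$ and $X_3$ individually. The bounded rational degree of the Blaschke expressions bounds the harmonic content from above, but the cleanest route is likely the same style of CAS-based symbolic manipulation used to establish Theorem~\ref{thm:power-concentric-unaligned}: write $X_2$ and $X_3$ out as explicit rational functions of $z = e^{i\omega t}$ restricted to $|z|=1$, and verify that only the $z^{\pm 1}$ terms survive. Once this is done, the affine combination step is immediate. Since the statement is advertised as a special case of Theorem~\ref{thm:ellipse-locus}, the same machinery that handles the generic non-concentric case should specialize cleanly to yield the concentric result claimed here.
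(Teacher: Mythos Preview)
Your target conclusion---that in a suitable parameter both $X_2$ and $X_3$ become ``single-frequency'' curves of the form $u\lambda + v\lambda^{-1} + w$ with $|\lambda|=1$, so that every affine combination is again of that form---is exactly what the paper establishes. But the paper does not argue by Fourier synchronization; it simply declares this proposition a special case of Theorem~\ref{thm:ellipse-locus}, whose proof is a direct computation: affinely send the outer ellipse to a circle, invoke the Blaschke parametrization of Definition~\ref{def:bla} so that the elementary symmetric polynomials $\sigma_1,\sigma_2,\sigma_3$ are \emph{explicit linear polynomials in $\lambda$}, write $\X\ab$ as a symmetric rational function of the vertices, reduce to $\sigma_1,\sigma_2,\sigma_3$, and read off $\X\ab = u\lambda + v\lambda^{-1} + w$. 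Lemma~\ref{lem:ell-param} then finishes.

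Your route, as written, has two real gaps. First, the cyclic-shift ansatz $P_j(t)=\Phi(t+(j-1)\tau)$ with $\Phi$ \emph{rational in $e^{i\omega t}$} conflates two different parametrizations. In the Blaschke setup the family parameter is $\lambda\in\T$, and the individual vertices $z_j$ are the three roots of a cubic in $z$ with coefficients linear in $\lambda$; these roots are \emph{not} rational in $\lambda$. What \emph{is} rational (indeed polynomial) in $\lambda$ is the set of symmetric functions $\sigma_1,\sigma_2,\sigma_3$---and that is the lever the paper actually pulls. The uniformizing parameter $t$ for which the vertices become equal shifts exists (Poncelet closure on the elliptic curve of flags), but there $\Phi$ is an elliptic function, not a rational function of $e^{i\omega t}$, so your ``bounded rational degree'' control on harmonics evaporates. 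Second, the inference ``the locus is an ellipse and the parametrization has period $\tau$, therefore only the fundamental harmonic is present'' is not valid on its own: an ellipse can be traced non-uniformly, or at frequency $2\omega$, and in either case the affine combination argument collapses. You acknowledge this and propose to close it by CAS verification that only $z^{\pm1}$ survive---but at that point you are doing precisely the paper's computation, and the harmonic superstructure adds nothing. The cleanest fix is to drop the cyclic-shift story entirely and argue via symmetric polynomials in $\lambda$, i.e.\ to specialize Theorem~\ref{thm:ellipse-locus}.
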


There are 226 triangle centers on \cite{etc} which are fixed linear combinations of $X_2$ and $X_3$; see Observation~\ref{obs:affine-euler-line}. Experimentally, these are also the only ones which trace out ellipses.

Consider the converse of Proposition~\ref{prop:concentric}. If for some concentric, non-axis aligned pair some triangle center $\X$ has an elliptic locus, can it be affirmed that $\X$ is a fixed linear combination of $X_2$ and $X_3$? Consider the case of the Steiner point $X_{99}$. This point is known to lie on the Euler line, circumcircle, and Steiner circumellipse, although it is not a fixed linear combination of $X_2$ and $X_3$ \cite{etc}. Still, over (i) the homothetic family, and (ii) the family with circumcircle, its locus is (i) the outer (Steiner) ellipse, and (ii) the outer circle. However, over the confocal family, the locus of $X_{99}$ is non-elliptic.

Based on experimental evidence, in Conjecture~\ref{conj:concentric} (Section~\ref{sec:open-questions}) we state a converse to Proposition~\ref{prop:concentric}.

\section{Non-Concentric with Circumcircle}
\label{sec:nonconcentric-circumcircle}
Here we consider 3-periodics inscribed in a circle and circumscribing a non-concentric ellipse. We will work in the complex plane and apply Blaschke Product techniques \cite{daepp-2019} which simplify our parametrization. Namely, 3-periodic vertices become symmetric with respect to the information of the circle-ellipse pair.

As a first step, identify points in $\R^2$ with points in the complex plane $\Cp$. Let $\D$ denote the open unit disk $\{z\in\Cp : |z|<1\}$ and $\T$ denote the unit circle $\{z\in\Cp : |z|=1\}$. By translation and scaling, we may assume the outer circle of the pair to be the unit circle $\T$. Let $\{f,g\}$ be the two foci of the inner ellipse. As in \cite{daepp-2019}, define:

\begin{definition}{Degree-3 Blaschke Product} 

\[
B(z):=z\left( \frac{z-f}{1-\ol f z}\right) \left( \frac{z-g}{1-\ol g z}\right)
\]

\end{definition}

\noindent Note that if one wants to study the concentric setting, just substitute $g=-f$.

Following chapter 4 of \cite{daepp-2019}, for each $\l\in\T$, the three solutions of $B(z)=\l$ are the vertices of a 3-periodic orbit of the Poncelet family of triangles in the complex plane, and as $\l$ varies in $\T$, the whole family of triangles is covered. Clearing the denominator in this equation and passing everything to the left-hand side, we get

\[
z^3-(f+g+\l\ol{f} \ol{g})z^2+(f g+\l(\ol f+\ol g))z-\l=0
\]

Let $z_1,z_2,z_3\in\Cp$ denote the vertices of Poncelet 3-periodics in the pair with   circumcircle. Using Viète's formula, we obtain the following parametrization of the elementary symmetric polynomials on $z_1,z_2,z_3$:

\begin{definition}[Blaschke's Parametrization]
\begin{align*}
    \sigma_1:=z_1+z_2+z_3=& f+g+\l\ol f \ol g \\
    \sigma_2:=z_1 z_2+z_2 z_3+z_3 z_1=& f g+\l(\ol f+\ol g) \\
    \sigma_3:=z_1 z_2 z_3=& \l
\end{align*}
where $f,g$ are the foci of the inner ellipse and $\l\in\T$ is the varying parameter.
\label{def:bla}
\end{definition}

Referring to Figure~\ref{fig:nonconcentric-circumcircle-circular-loci-right-tris}:

\begin{proposition}
If a triangle center $\X\ab=\alpha X_2+ \beta X_3$ is a fixed linear combination of $X_2$ and $X_3$ for some $\alpha,\beta\in\mathbb{C}$, its locus over 3-periodics in the non-concentric pair with a circumcircle is a circle centered on $\mathcal{O}_\alpha$ and of radius $\mathcal{R}_\alpha$ given by:

\[ \mathcal{O}_\alpha = \frac{\alpha(f+g)}{3},\;\;\; \mathcal{R}_\alpha =\frac{|\alpha f g|}{3}\]
\label{prop:LinComb-concentric}
\end{proposition}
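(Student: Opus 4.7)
The plan is to exploit the Blaschke parametrization (Definition~\ref{def:bla}) to express both $X_2$ and $X_3$ as explicit functions of the single parameter $\lambda\in\T$, and then read off the locus directly. The crucial simplification is the choice of normalization: the outer circle is the unit circle $\T$ centered at the origin, so the circumcenter $X_3$ is stationary at $0$. This immediately collapses $\X_{\alpha,\beta}=\alpha X_2+\beta X_3$ to $\alpha X_2$, explaining why neither $\mathcal{O}_\alpha$ nor $\mathcal{R}_\alpha$ depend on $\beta$ in the statement.

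Next I would write the barycenter in terms of the elementary symmetric polynomial, $X_2=(z_1+z_2+z_3)/3=\sigma_1/3$, and substitute the Viète-type formula from Definition~\ref{def:bla} to obtain
\[
X_2=\frac{f+g}{3}+\frac{\bar f\bar g}{3}\lambda.
\]
Multiplying by $\alpha$ and adding $\beta X_3=0$ yields
\[
\X_{\alpha,\beta}=\frac{\alpha(f+g)}{3}+\frac{\alpha\,\bar f\bar g}{3}\,\lambda,
\]
which is an affine function of $\lambda$ with constant coefficients (depending only on $\alpha,f,g$).

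Finally, as $\lambda$ traverses $\T$, the expression $c_0+c_1\lambda$ traces the circle of radius $|c_1|$ centered at $c_0$. Thus the locus is a circle with
\[
\mathcal{O}_\alpha=\frac{\alpha(f+g)}{3},\qquad \mathcal{R}_\alpha=\left|\frac{\alpha\,\bar f\bar g}{3}\right|=\frac{|\alpha f g|}{3},
\]
as claimed. The only real subtlety is noticing that the Blaschke normalization forces $X_3=0$; once that observation is made, everything else follows immediately from Viète's formulas and the fact that $|\lambda|=1$, with no further computation required. The collinearity of the centers $\mathcal{O}_\alpha$ along a line through $X_3=0$ (mentioned in the main-results overview) is then an immediate corollary, since $\mathcal{O}_\alpha=\alpha(f+g)/3$ is a scalar multiple of the fixed vector $(f+g)/3$ as $\alpha$ varies.
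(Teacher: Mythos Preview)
Your proof is correct and follows essentially the same approach as the paper: both arguments observe that the normalization forces $X_3=0$, write $X_2=\sigma_1/3$, substitute the Blaschke parametrization $\sigma_1=f+g+\lambda\ol f\,\ol g$, and read off the circle from the resulting affine expression in $\lambda\in\T$. Your write-up is slightly more detailed (and also previews the collinearity corollary), but the content and sequence of ideas are identical.
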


\begin{observation}
Notice that the center and radius of the locus do not depend on $\beta$ since the circumcenter $X_3$ is stationary at the origin of this system.
\end{observation}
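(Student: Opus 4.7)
The plan is straightforward: verify that the circumcenter $X_3$ is literally the origin in the Blaschke setup, and then read off the $\beta$-independence directly from the defining expression for $\X\ab$.

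First I would point out that by the normalization preceding Definition~\ref{def:bla}, the outer ellipse of the pair has been translated and scaled to the unit circle $\T$ centered at $0$. Every 3-periodic in the family is inscribed in $\T$, so its circumscribed circle is $\T$ itself, and therefore its circumcenter satisfies $X_3 = 0$ for every orbit; in other words $X_3$ is stationary at the origin throughout the family. Substituting $X_3 = 0$ into the defining combination yields $\X\ab = \alpha X_2 + \beta X_3 = \alpha X_2$, which is manifestly independent of $\beta$.

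From this it follows that the locus of $\X\ab$ is just the $\alpha$-dilation from the origin of the locus of $X_2$. By Definition~\ref{def:bla}, $X_2 = \sigma_1/3 = (f + g + \l\, \bar f \bar g)/3$, so as $\l$ sweeps $\T$ the point $X_2$ traces the circle of radius $|fg|/3$ about $(f+g)/3$. Multiplying through by $\alpha$ reproduces precisely the formulas $\mathcal{O}_\alpha = \alpha(f+g)/3$ and $\mathcal{R}_\alpha = |\alpha f g|/3$ of Proposition~\ref{prop:LinComb-concentric}, with no occurrence of $\beta$ at any stage.

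There is no real obstacle here; the argument is essentially tautological once one records that $X_3 \equiv 0$ in this normalization. The only subtle point worth flagging is that the $\beta$-independence is an artifact specific to the non-concentric-with-circumcircle arrangement: in a generic configuration neither $X_2$ nor $X_3$ is stationary, and the analogous locus parameters for $\X\ab$ will genuinely depend on both coefficients.
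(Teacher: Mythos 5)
Your argument is correct and is essentially the same as the paper's: the paper justifies this observation inside the proof of Proposition~\ref{prop:LinComb-concentric}, where it likewise notes that the triangles are inscribed in the unit circle so $X_3=0$, whence $\X\ab=\alpha X_2=\alpha(f+g+\l\,\ol{f}\ol{g})/3$ and $\beta$ drops out. (Only a cosmetic quibble: for complex $\alpha$ the map $z\mapsto\alpha z$ is a rotation–dilation rather than a pure dilation, but this does not affect your conclusion.)
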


\begin{proof}
Since, $z_1,z_2,z_3$ are the 3 vertices of the Poncelet triangle inscribed in the unit circle, its barycenter and circumcenter are given by $X_2=(z_1+z_2+z_3)/3$ and $X_3=0$, respectively. We define $\X\ab:=\alpha X_2+ \beta X_3=\alpha (z_1+z_2+z_3)/3$. Using Definition~\ref{def:bla}, we get $\X\ab=\alpha(f+g+\l \ol{f}\ol{g})/3=\alpha(f+g)/3+\l(\alpha \ol{f}\ol{g})/3$, where the parameter $\l$ varies on the unit circle $\T$. Thus, the locus of $\X_{\gamma}$ over the Poncelet family of triangles is a circle with center $\mathcal{O}_{\alpha}:=\alpha(f+g)/3$ and radius $\mathcal{R}_{\alpha}:=|\alpha \ol{f}\ol{g}|/3=|\alpha f g|/3$.
\end{proof}

Using $\alpha=1-\gamma, \beta=\gamma$ for a fixed $\gamma\in\R$ in Proposition \ref{prop:LinComb-concentric}, we get:

\begin{corollary}
 If a triangle center $\X_\gamma=(1-\gamma) X_2+ \gamma X_3$ is a real affine combination of $X_2$ and $X_3$ for some $\gamma\in\R$, its locus over 3-periodics in the non-concentric pair with a circumcircle is a circle. Moreover, as we vary $\gamma$, the centers of these loci are collinear with the fixed circumcenter.
 \label{cor:gamma-with-circumcircle}
\end{corollary}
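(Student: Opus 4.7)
The plan is to derive this corollary as an immediate specialization of Proposition \ref{prop:LinComb-concentric}. First I would set $\alpha = 1-\gamma$ and $\beta = \gamma$ with $\gamma\in\R$, so that $\X_\gamma = (1-\gamma)X_2+\gamma X_3 = \alpha X_2+\beta X_3$ falls under the hypothesis of the proposition (which only requires $\alpha,\beta\in\Cp$, and in particular allows real values). Invoking the proposition directly yields that the locus is a circle centered at $\mathcal{O}_{1-\gamma} = (1-\gamma)(f+g)/3$ with radius $\mathcal{R}_{1-\gamma}=|1-\gamma|\,|fg|/3$, which gives the first assertion for free.

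For the collinearity claim, I would observe that as $\gamma$ varies over $\R$, the scalar $(1-\gamma)/3$ is real, so the centers $\mathcal{O}_{1-\gamma}$ trace out the real one-dimensional line in $\Cp$ through the origin in the direction of the fixed complex number $f+g$ (in the degenerate case $f+g=0$, i.e.\ the concentric sub-case, every locus center collapses to the origin and the claim is vacuous). Because the circumcenter $X_3$ is stationary at the origin of the working coordinate system, it lies on this line, proving that all the loci's centers are collinear with $X_3$.

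The only real content here — and the reason the corollary deserves to be recorded separately from the proposition — is the bookkeeping point that restricting $\alpha=1-\gamma$ to be \emph{real} is exactly what collapses the two-real-dimensional family of centers allowed by the proposition (when $\alpha\in\Cp$) down to a one-real-dimensional family on a line. I do not anticipate any technical obstacle; the work was already done in setting up the Blaschke parametrization (Definition \ref{def:bla}) and in proving the proposition. The corollary is essentially a "choose real coefficients and read off" statement.
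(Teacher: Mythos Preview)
Your proposal is correct and matches the paper's approach exactly: the corollary is introduced in the paper with the sentence ``Using $\alpha=1-\gamma,\ \beta=\gamma$ for a fixed $\gamma\in\R$ in Proposition~\ref{prop:LinComb-concentric}, we get:'' and no further proof is given. Your write-up is in fact more thorough, since you spell out why the real scalar $(1-\gamma)/3$ forces the centers onto a single line through the origin (the stationary $X_3$) and you note the degenerate concentric case $f+g=0$.
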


Many triangle centers in \cite{etc} are affine combinations of the barycenter $X_2$ and circumcenter $X_3$. See Observation \ref{obs:affine-euler-line} for a compilation of them.

\begin{observation}
For a generic triangle, only $X_{98}$, and $X_{99}$ are simultaneously on the Euler line and on the circumcircle. However these are not linear combinations of $X_2$ and $X_3$. Still, if a triangle center is always on the circumcircle of a generic triangle (there are many of these, see \cite[Circumcircle]{mw}), its locus over 3-periodics in the non-concentric pair with circumcircle is trivially a circle.
\end{observation}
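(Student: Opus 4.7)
The statement factors into two essentially independent claims, and my plan is to handle them separately, with the bulk of the argument residing in the second (which is the geometrically substantive part).

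For the first claim -- that among all triangle centers in the ETC only $X_{98}$ (Tarry point) and $X_{99}$ (Steiner point) lie simultaneously on the Euler line and on the circumcircle of a generic scalene triangle -- I would appeal to classical triangle geometry rather than reprove it from scratch. Specifically, the Euler line of a generic triangle meets the circumcircle in exactly two points (since a line meets a circle in at most two points, and one checks that the Euler line is not tangent generically), and these two intersection points are by definition the Steiner and Tarry points, indexed as $X_{99}$ and $X_{98}$ in Kimberling's catalogue. To verify the parenthetical remark that they are not fixed affine combinations of $X_2$ and $X_3$, I would note that any point $(1-\gamma)X_2+\gamma X_3$ divides segment $X_2 X_3$ in a ratio independent of the triangle, while the position of $X_{98}$ and $X_{99}$ on the Euler line depends non-trivially on sidelengths (equivalently, their first barycentrics, $1/(b^2-c^2)(\dots)$ type expressions, are not linear combinations of the barycentrics of $X_2$ and $X_3$).

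For the second (main) claim, I would argue directly from the geometry of the setup of Section~\ref{sec:nonconcentric-circumcircle}. By construction, the outer conic of the pair is a circle (the unit circle $\T$ under the Blaschke normalization), and every Poncelet 3-periodic is inscribed in this same circle. Hence for every triangle $P_1P_2P_3$ in the family, the circumcircle coincides with $\T$, which is stationary. Now if $\X$ is any triangle center that always lies on the circumcircle of its reference triangle, then for each member of the family $\X$ lies on $\T$; consequently the locus of $\X$ is contained in $\T$. As the locus is a continuous image of the family parameter $\l\in\T$ and generically non-degenerate, it equals $\T$ (or, in degenerate corner cases, an arc of it), and is therefore trivially a circle.

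There is essentially no analytic obstacle here: the entire content of the second claim is the observation that the ``circumcircle'' of each 3-periodic is literally the fixed outer circle of the Poncelet pair, so no Blaschke computation along the lines of Proposition~\ref{prop:LinComb-concentric} is needed. The only subtlety worth flagging is that $X_{98}$ and $X_{99}$ illustrate why this circle locus does \emph{not} follow from Corollary~\ref{cor:gamma-with-circumcircle}: those two centers trace the full unit circle $\T$ but are not affine combinations of $X_2$ and $X_3$, showing that the hypothesis of Proposition~\ref{prop:LinComb-concentric} is sufficient but by no means necessary for a circular locus in this family.
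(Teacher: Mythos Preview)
The paper offers no proof for this Observation; it is stated as a remark without justification. Your treatment of the substantive second claim is correct and is exactly what the word ``trivially'' in the statement signals: in this family the outer conic \emph{is} the (fixed) circumcircle of every 3-periodic, so any triangle center constrained to lie on the circumcircle automatically traces a subset of that fixed circle. No Blaschke computation is needed, and your closing remark that this shows the hypothesis of Proposition~\ref{prop:LinComb-concentric} is sufficient but not necessary is apt.

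Your handling of the first sentence, however, contains a factual error. The two intersection points of the Euler line with the circumcircle are \emph{not} the Tarry point $X_{98}$ and the Steiner point $X_{99}$; in Kimberling's catalogue those intersections are indexed $X_{1113}$ and $X_{1114}$. In a generic triangle neither $X_{98}$ nor $X_{99}$ lies on the Euler line (for instance $X_{99}$ lies on the lines $X_2X_{39}$, $X_3X_{76}$, etc., but not on $X_2X_3$). So your ``a line meets a circle in two points, and those two points are by definition $X_{98},X_{99}$'' argument does not go through. In fact the paper's own assertion in that first sentence appears to be inaccurate, which is likely why no justification is supplied; you should flag this rather than attempt to prove it.
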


\begin{corollary}
 Over the family of 3-periodics inscribed in a circle and circumscribing a non-concentric inellipse centered at $O_c$, the locus of $X_k$, $k$ in 2,4,5,20 are circles whose centers are collinear. The locus of $X_5$ is centered on $O_c$. The centers and radii of these circular loci are given by:

\begin{alignat*}{4}
    O_2&=\frac{f+g}{3},\quad& O_4&=f+g,\quad&O_5&=\frac{f+g}{2},\quad&O_{20}&=-(f+g)\\
    r_2&=\frac{|f g|}{3},\quad&r_4 &= |f g|,\quad&r_5 &= \frac{|f g|}{2},\quad& r_{20}&= |f g|
\end{alignat*}

\end{corollary}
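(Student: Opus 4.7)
The plan is to recognize this corollary as a direct specialization of Proposition~\ref{prop:LinComb-concentric} to four well-known triangle centers, each of which is a fixed real linear combination of the barycenter $X_2$ and circumcenter $X_3$ via classical Euler-line identities. So the proof is essentially a table lookup followed by an application of the proposition, plus one geometric observation about the midpoint of the foci.

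First, I would list the explicit combinations. Since the Euler line gives $X_2=(2X_3+X_4)/3$, we have $X_4=3X_2-2X_3$, hence $\alpha=3$, $\beta=-2$. The nine-point center satisfies $X_5=(X_3+X_4)/2$, which after substitution yields $X_5=\tfrac{3}{2}X_2-\tfrac{1}{2}X_3$, so $\alpha=3/2$. The de Longchamps point $X_{20}$ is the reflection of $X_4$ in $X_3$, so $X_{20}=2X_3-X_4=-3X_2+4X_3$, giving $\alpha=-3$. Finally $X_2$ itself has $\alpha=1$.

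Next I would apply Proposition~\ref{prop:LinComb-concentric} with these values, which immediately produces
\[
\mathcal{O}_{\alpha}=\frac{\alpha(f+g)}{3},\qquad \mathcal{R}_{\alpha}=\frac{|\alpha\,f g|}{3},
\]
yielding exactly the entries listed in the corollary. Collinearity is then automatic: every $\mathcal{O}_\alpha$ is a real scalar multiple of the fixed complex number $f+g$, so all four centers lie on the line through the origin (which is the stationary $X_3$) in the direction of $f+g$. For the claim that the $X_5$ locus is centered at $O_c$, I would invoke the elementary fact that the center of an ellipse is the midpoint of its foci, so $O_c=(f+g)/2=\mathcal{O}_{3/2}$, matching the value obtained from the formula.

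There is essentially no obstacle here beyond correctly recalling the Euler-line coefficients; the only minor point worth verifying carefully is the sign convention for $X_{20}$ (reflection of $X_4$ in $X_3$ versus reflection of $X_3$ in $X_2$ lead to different formulas, and one must pick the standard one). Once the coefficients are fixed, Proposition~\ref{prop:LinComb-concentric} does all the work and no further computation involving the Blaschke parametrization is needed.
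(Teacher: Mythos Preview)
Your proposal is correct and follows essentially the same route as the paper: the paper also specializes Proposition~\ref{prop:LinComb-concentric} (via Corollary~\ref{cor:gamma-with-circumcircle}, using $\gamma=0,-2,-1/2,4$, which correspond exactly to your $\alpha=1,3,3/2,-3$), notes that all centers are real multiples of $f+g$ hence collinear, and identifies $O_5=(f+g)/2$ as the midpoint of the foci, i.e., $O_c$. The only cosmetic difference is that the paper quotes $\gamma$-values rather than deriving the Euler-line coefficients explicitly as you do.
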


\begin{proof}
As in Corollary \ref{cor:gamma-with-circumcircle}, we can use Proposition \ref{prop:LinComb-concentric} with $\gamma=0,-2,-1/2,4$ to get the center and radius for $X_2,X_4,X_5,X_{20}$, respectively. All of these centers are real multiples of $f+g$, so they are all collinear. Moreover, the center $O_5$ of the circular loci of $X_5$ is $(f+g)/2$, that is, the midpoint of the foci of the inellipse, or in other words, the center $O_c$ of the inellipse.
\end{proof}
 
Referring to Figure~\ref{fig:nonconcentric-circumcircle-circular-loci-right-tris}:

\begin{observation}
The family of 3-periodics in the pair with circumcircle includes obtuse triangles if and only if $X_3$ is exterior to the caustic. \end{observation}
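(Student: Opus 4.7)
The plan is to exploit the classical criterion that a triangle is acute, right, or obtuse according as its circumcenter lies in the interior, on the boundary, or in the exterior of its triangular region. In the present setup the circumcenter $X_3$ is stationary at the origin, while the caustic is the common inellipse of every 3-periodic, so the caustic is contained in every triangular region of the family. The strategy is to compare the position of $X_3$ relative to the caustic with its position relative to an arbitrary triangle.

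For the easy direction (no obtuse triangles when $X_3$ lies in the closed caustic), I use convexity: since the caustic is convex, it lies in the closed half-plane cut off by any of its tangent lines, and each 3-periodic triangular region equals the intersection of the three closed half-planes bounded by its (tangent-line) sides. Hence the closed caustic sits inside every triangle of the family, so if $X_3$ is in the caustic then $X_3$ is in every triangle and no triangle is obtuse.

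For the converse, I plan to construct an obtuse triangle explicitly when $X_3$ is strictly exterior to the caustic. Let $p$ be the point of the caustic nearest $X_3$ and $\ell$ the tangent line to the caustic at $p$. By construction $\ell$ does not pass through $X_3$ and strictly separates $X_3$ from the caustic. Since the caustic is nested inside the circumcircle, $\ell$ is a chord of the circumcircle, and by Poncelet's closure theorem $\ell$ appears as a side of a unique 3-periodic $T$ in the family. The caustic lies in the triangular region of $T$, hence on the ``triangle side'' of $\ell$, while $X_3$ lies in the opposite open half-plane. Therefore $X_3$ is strictly exterior to $T$, and $T$ is obtuse.

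The main obstacle I foresee is making the invocation of Poncelet closure rigorous: namely, that every tangent chord of the caustic inside the circumcircle does appear as a side of some triangle of the family. In the Blaschke-product framework of Section~\ref{sec:nonconcentric-circumcircle} this reduces to observing that as $\l$ ranges over $\T$, the roots of $B(z)=\l$ produce sides whose collection sweeps out all such tangent chords; this is standard but deserves explicit attention, since without it the construction of the obtuse triangle $T$ from the separating tangent $\ell$ is not justified.
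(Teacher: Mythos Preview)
Your argument is correct and follows the same idea the paper sketches: both directions rest on the classical fact that a triangle is obtuse precisely when its circumcenter lies outside the closed triangular region, together with the observation that the (closed) caustic sits inside every 3-periodic. The paper's justification is extremely terse---for the converse it essentially just asserts that ``if $X_3$ is exterior, it must also be external to some 3-periodic''---whereas you supply the missing construction via the nearest-point tangent line $\ell$, which cleanly separates $X_3$ from the caustic and hence from the triangle having $\ell$ as a side. Your explicit invocation of Poncelet closure (that every tangent chord of the caustic occurs as a side of some 3-periodic, because from any endpoint of $\ell$ on the circumcircle the two tangents to the caustic are exactly the two sides of the unique 3-periodic through that vertex) fills the only gap, and the concern you flag about it is easily discharged in the way you indicate.
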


This is due to the fact that when $X_3$ is interior to the caustic, said triangle center can never be exterior to the 3-periodic. Conversely, if $X_3$ is exterior, it must also be external to some 3-periodic, rendering the latter obtuse.

\begin{figure}
    \centering
    \includegraphics[width=\textwidth]{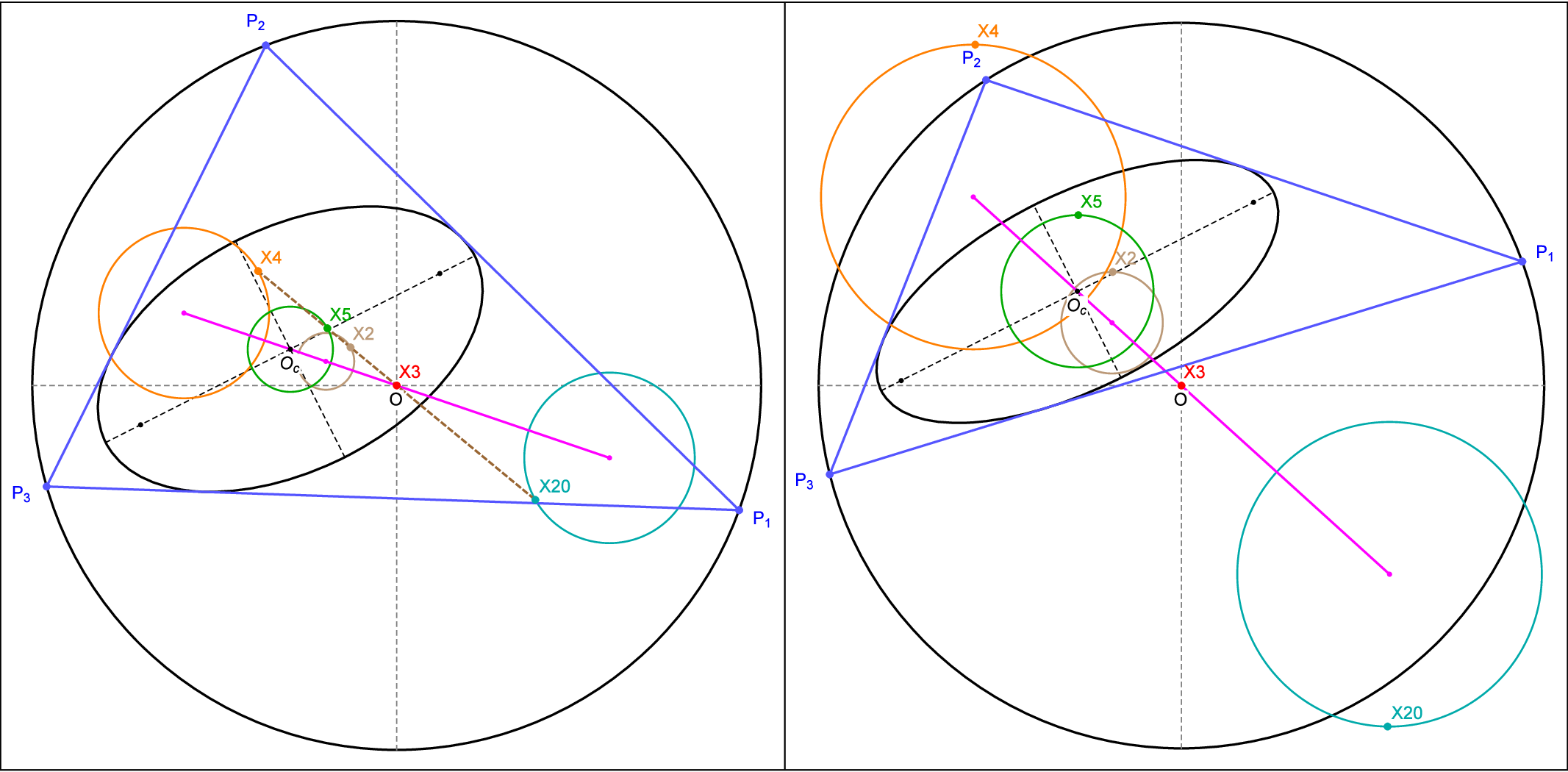}
    \caption{\textbf{Left:} 3-periodic family (blue) in the pair with circumcircle where the caustic contains $X_3$, i.e., all 3-periodics are acute. The loci of $X_4$ and $X_{20}$ are interior to the circumcircle. \textbf{Right:} $X_3$ is exterior to the caustic, and 3-periodics can be either acute or obtuse. Equivalently, the locus of $X_4$ intersects the circumcircle. In both cases (left and right), the loci of $X_k$, $k$ in 2,4,5,20 are circles with collinear centers (magenta line). The locus of $X_5$ is centered on $O_c$. The center of the $X_2$ locus is at $2/3$ along $O O_c$. \href{https://youtu.be/HXgJQo2UT_8}{Video}}
    \label{fig:nonconcentric-circumcircle-circular-loci-right-tris}
\end{figure}

\section{Generic Nested Ellipses}
\label{sec:nonconcentric-tilted}
In this Section we prove the locus of a given fixed linear combination of $X_2$ and $X_3$ is an ellipse. We will continue to use Blaschke product techniques since a generic non-concentric pair can always be seen as the affine image of a pair with circumcircle.

Consider the generic pair of nested ellipses $\E=(O,a,b)$ and $\E_c=(O_c,a_c,b_c.\theta)$ in Figure~\ref{fig:n3-general-pos}. Let s$\theta$, c$\theta$ denote the sine and cosine of $\theta$, respectively. Define $c_c^2=a_c^2-b_c^2$. The Cayley condition for the pair to admit a 3-periodic family is given by:

{\small
\begin{align}
&{b}^{4}x_c^{4}+2\,{a}^{2}{b}^{2}x_c^{2}y_c^{2}+
 \left(  2 c_c^2  \left( -{b}^{2}({a}^{2}+{b}^{2} )\right)  \text{c}\theta^2  - 2\left(  \,b ^{2}- \,b_c
^{2} \right) {b}^{2}{a}^{2}-2\,{b}^{4}b_c^{2} \right)x_c
^{2} \label{eqn:cayley}\\
&-8\,{a}^{2}{b}^{2}x_c\,{  y_c}\,c_c^2 
\text{s}\theta\text{c}\theta  +{a}^{4}y_c^{4} + \left(  2 c_c^2 a^2 \left(
{a}^{2}+{b}^{2}  \right)\text{c}\theta^2  
 -2 \left(  \,b_c^{2}+{b}^{2} \right) {a}^{4}+2
\,{a}^{2}{b}^{2}b_c^{2} \right) y_c^{2} \nonumber\\
&+ c_c^4  c^4  \left( \text{c}\theta^4-2\, c_c^2 c^2
  \left( {a}^{2} a_c^{2}-{b}^{2}{a}^{2}+
b_c^{2}{b}^{2} \right) \text{c}\theta^2 \right. \nonumber\\
 &+ \left( a a_c+a b-b b_c \right)  \left( a a_c
-a b -b b_c \right)  \left( a a_c+a b+b b_c \right)  \left( a
a_c-a b+b b_c \right) = 0\nonumber
\end{align}
}

 
 

Before moving on, we first prove a small parametrization lemma for complex coordinates:

\begin{lemma}
If $u,v,w\in\mathbb{C}$ and $\l$ is a parameter that varies over the unit circle $\T\subset\mathbb{C}$, then the curve parametrized by
\[ F(\l)=u \l+v\frac{1}{\l}+w \]
is an ellipse centered at $w$, with semiaxis $|u|+|v|$ and $\big||u|-|v|\big|$, rotated with respect to the canonical axis of $\mathbb{C}$ by an angle of $(\arg u+\arg v)/2$.
\label{lem:ell-param}
\end{lemma}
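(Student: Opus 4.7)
The approach I propose is a direct, coordinate-based computation, exploiting the fact that the only free parameter is the unit-modulus complex number $\lambda$. The plan is first to translate by $-w$, reducing the statement to showing that $G(\lambda) := u\lambda + v/\lambda$ for $\lambda \in \T$ traces the claimed ellipse centered at the origin; adding $w$ back simply shifts its center to $w$.

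Next I would put everything in polar form: write $u = |u|e^{i\alpha}$ and $v = |v|e^{i\beta}$ (so $\alpha = \arg u$, $\beta = \arg v$) and parametrize $\lambda = e^{i\phi}$ for $\phi \in [0,2\pi)$. Substituting into $G$ yields
$$G(e^{i\phi}) = |u|e^{i(\alpha+\phi)} + |v|e^{i(\beta-\phi)}.$$
The key maneuver is to factor out the mean phase $e^{i(\alpha+\beta)/2}$ and introduce the shifted parameter $\psi := \phi + (\alpha-\beta)/2$, which converts the expression to
$$G(e^{i\phi}) = e^{i(\alpha+\beta)/2}\bigl(|u|e^{i\psi} + |v|e^{-i\psi}\bigr) = e^{i(\alpha+\beta)/2}\bigl((|u|+|v|)\cos\psi + i(|u|-|v|)\sin\psi\bigr),$$
by Euler's formula. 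The parenthesized expression is the standard parametrization of an origin-centered, axis-aligned ellipse with semiaxes $|u|+|v|$ (along the real axis) and $\bigl||u|-|v|\bigr|$ (along the imaginary axis); as $\phi$ sweeps $[0,2\pi)$ so does $\psi$, so this locus is traversed once.

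To finish, I would observe that multiplication by $e^{i(\alpha+\beta)/2}$ is a rotation of the plane by angle $(\alpha+\beta)/2 = (\arg u + \arg v)/2$, which carries the canonical ellipse above to the rotated ellipse in the statement, and finally re-translate by $+w$ to recover the full claim. There is no genuine obstacle here — the proof is essentially a single computation — but the one small point I would be careful about is that the minor semiaxis must be recorded as $\bigl||u|-|v|\bigr|$ rather than $|u|-|v|$, so the formula remains correct when $|v|>|u|$; this is automatic from the $\psi\mapsto-\psi$ symmetry of the ellipse, which swaps the sign of the imaginary part without changing the trace.
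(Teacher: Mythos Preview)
Your argument is correct. The direct polar substitution $\lambda=e^{i\phi}$, the factoring out of the mean phase $e^{i(\alpha+\beta)/2}$, and the change of parameter $\psi=\phi+(\alpha-\beta)/2$ cleanly reduce $G$ to the standard parametrization $(|u|+|v|)\cos\psi+i(|u|-|v|)\sin\psi$ of an axis-aligned ellipse, after which the rotation and translation are immediate.

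The paper takes a different, more structural route: it first disposes of the degenerate cases $u=0$ or $v=0$, then picks $k$ with $k^2=u/v$, writes $k=r\mu$ with $r>0$ and $|\mu|=1$, and factors $F$ as a composition $T\circ H\circ S\circ R$ of four explicit real-linear maps (a rotation $R(z)=\mu z$, the stretch $S(z)=rz+(1/r)\bar z$, a rotation-scaling $H(z)=kvz$, and the translation $T(z)=z+w$), tracking the image of $\T$ through each. Your approach is shorter and more elementary, requiring only Euler's formula and no auxiliary square root; the paper's decomposition is a bit heavier but makes the geometric roles of the semiaxis lengths and the rotation angle visible as separate transformations. One small point the paper handles that you gloss over: when $u=0$ or $v=0$ the quantity $\arg u$ (resp.\ $\arg v$) is undefined, so strictly speaking your polar representation needs a word about that case --- but since the locus is then a circle and the rotation angle is irrelevant, this is harmless.
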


\begin{proof}
If either $u=0$ or $v=0$, the curve $h(\T)$ is clearly the translation of a multiple of the unit circle $\T$, and the result follows. Thus, we may assume $u\neq 0$ and $v\neq 0$.

Choose $k\in\mathbb{C}$ such that $k^2=u/v$. Write $k$ in polar form, as $k=r \mu$, where $r>0$ ($r\in\R$) and $|\mu|=1$. We define the following complex-valued functions:
\[R(z):=\mu z,~ S(z):=r z+(1/r) \ol{z},~ H(z):=k v z,~ T(z):=z+w\]

One can straight-forwardly check that $F=T\circ H\circ S\circ R$.

Since $|\mu|=1$, $R$ is a rotation of the plane, thus $R$ sends the unit circle $\T$ to itself. Since $r\in\R$, $r>0$, if we identify $\mathbb{C}$ with $\R^2$, $S$ can be seen as a linear transformation that sends $(x,y)\mapsto\left(\left(r+1/r\right)x,\left(r-1/r\right)y\right)$. Thus, $S$ sends $\T$ to an axis-aligned, origin-centered ellipse $\E_1$ with semiaxis $r+1/r$ and $|r-1/r|$. $H$ is the composition of a rotation and a homothety. $H$ sends the ellipse $\E_1$ to an origin-centered ellipse $\E_2$ rotated by an angle of $\arg(k v)=\arg(k)+\arg(v)=(\arg(u)-\arg(v))/2+\arg(v)=(\arg(u)+\arg(v))/2$. The semiaxis of $\E_2$ have length
\begin{align*}
|k v|&(r+1/r)=r|v|(r+1/r)=|r^2 v|+|v|=|k^2 v|+|v|=|u|+|v|\text{, and}\\
|k v|&|r-1/r|=r|v||r-1/r|=\big||r^2 v|-|v|\big|=\big||k^2 v|-|v|\big|=\big||u|-|v|\big|
\end{align*}

Finally, $T$ is a translation, thus $T$ sends $\E_2$ to an ellipse $\E_3$ centered at $w$, rotated by an angle $(\arg(u)+\arg(v))/2$ from the axis, with semiaxis lengths $|u|+|v|$ and $\big||u|-|v|\big|$, as desired.
\end{proof}

Recall that over Poncelet N-periodics interscribed in a generic pair of conics, the locus of vertex and area centroids is an ellipse \cite{sergei2016-com} as is that of the circumcenter-of-mass \cite{sergei2014-circumcenter-of-mass}, a generalization of $X_3$ for $N>3$. Referring to Figure~\ref{fig:nonconcentric-xns}:

\begin{theorem}
Over the family of 3-periodics interscribed in an ellipse pair in general position (non-concentric, non-axis-aligned),
if $\X\ab$ is a fixed linear combination of $X_2$ and $X_3$, i.e., $\X\ab=\alpha X_2+\beta X_3$ for some fixed $\alpha,\beta\in\mathbb{C}$, then its locus is an ellipse. 
\label{thm:ellipse-locus}
\end{theorem}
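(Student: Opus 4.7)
The plan is to reduce to the circumcircle pair of Section~\ref{sec:nonconcentric-circumcircle} via a real affine change of coordinates, express both $X_2$ and $X_3$ of the 3-periodic in the generic pair as complex-valued functions of the Blaschke parameter $\l\in\T$ of the form $u\l + v/\l + w$, and then appeal to Lemma~\ref{lem:ell-param}. First I would recall that any pair of nested ellipses is the image of some pair with circumcircle under an invertible real affine map, which in complex form can be written $A(z) = pz + q\ol z + t$ for some $p,q,t\in\Cp$ with $|p|\ne|q|$. Since Poncelet closure is affine-invariant, $A$ sends 3-periodics to 3-periodics; thus if $z_1,z_2,z_3\in\T$ are vertices of a 3-periodic in the pulled-back pair (whose elementary symmetric functions $\sigma_1,\sigma_2,\sigma_3$ are given by Definition~\ref{def:bla}), then $w_i := A(z_i) = p z_i + q/z_i + t$ (using $\ol z_i = 1/z_i$) are the vertices of a 3-periodic in the generic pair.

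The barycenter transforms straightforwardly: $X_2' = A(\sigma_1/3) = (p\ol f\ol g/3)\,\l + (qfg/3)(1/\l) + w_2^\star$ for a constant $w_2^\star$ depending only on $p,q,t,f,g$, which is already of the required shape. The heart of the proof is to show the same for the new circumcenter $X_3'$, i.e.\ the circumcenter of $w_1,w_2,w_3$. Setting $\xi := X_3'$ and writing the equations $|\xi - w_i|^2 = c$ for an unknown real constant $c$, one substitutes $w_i = pz_i + q/z_i + t$, multiplies through by $z_i^2$, and observes that this forces a quartic polynomial in $z$ --- whose coefficients depend linearly on $\xi,\ol\xi,c$ --- to vanish at $z_1,z_2,z_3$. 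Hence the quartic must be divisible by the Blaschke cubic $z^3 - \sigma_1 z^2 + \sigma_2 z - \sigma_3$; matching the $z^4$ and $z^0$ coefficients pins down the extra root as $r = q\ol p / (p\ol q \,\l)$, while matching the $z^3$ and $z^1$ coefficients yields a $2\times 2$ complex-linear system in $(\xi,\ol\xi)$ with nonzero determinant $|q|^2 - |p|^2$. Solving it and substituting the explicit $\sigma_i$ and $r$ shows that $\xi$ is of the form $u_3\l + v_3/\l + w_3$ for constants depending only on $p,q,t,f,g$.

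Putting both pieces together, for any fixed $\alpha,\beta\in\Cp$ we get $\X\ab = \alpha X_2' + \beta X_3' = U\l + V/\l + W$ with $U = \alpha u_2 + \beta u_3$, $V = \alpha v_2 + \beta v_3$, and $W = \alpha w_2^\star + \beta w_3$. Lemma~\ref{lem:ell-param} then immediately identifies the locus as an ellipse centered at $W$, with semi-axes $|U|+|V|$ and $\big||U|-|V|\big|$, tilted by $(\arg U + \arg V)/2$.

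The main obstacle will be the algebra of the circumcenter step: carrying out the quartic-divides-cubic factorization cleanly and verifying that once the explicit $\sigma_1,\sigma_2,\sigma_3=\l$ and $r = q\ol p/(p\ol q\,\l)$ are substituted, every $\l$-dependence in $\xi$ collapses into just the Laurent monomials $\l^{\pm 1}$ and a constant --- in particular, that the $r\sigma_2$ term contributes only a $1/\l$ and no higher powers. The case $q = 0$, where $A$ is a complex similarity, degenerates the derivation of $r$ but is already covered by Proposition~\ref{prop:LinComb-concentric} and Corollary~\ref{cor:gamma-with-circumcircle}; the case $|p|=|q|$ corresponds to a singular affine map and can be excluded from the start.
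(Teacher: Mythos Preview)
Your proposal is correct and follows the same overall architecture as the paper's proof: pull the generic pair back to a circumcircle pair by a real affine map $z\mapsto pz+q\ol z$ (plus translation), use $\ol z_i=1/z_i$, invoke the Blaschke parametrization of $\sigma_1,\sigma_2,\sigma_3$, show that $\X\ab$ has the Laurent form $u\l+v/\l+w$, and finish with Lemma~\ref{lem:ell-param}.

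The one substantive difference is how you handle the circumcenter. The paper plugs $v_i=L(z_i)$ into the closed determinantal formula for $X_3$, observes the result is a symmetric rational function of $z_1,z_2,z_3$, rewrites it in terms of $\sigma_1,\sigma_2,\sigma_3$, substitutes the Blaschke expressions, and simplifies (essentially a CAS computation) to the explicit $u,v,w$ displayed after \eqref{eqn:xi-param}. Your route is more structural: the equidistance condition $|\xi-w_i|^2=c$, after multiplying by $z_i^2$, becomes a degree-4 polynomial in $z$ vanishing at $z_1,z_2,z_3$, hence divisible by the Blaschke cubic $z^3-\sigma_1 z^2+\sigma_2 z-\sigma_3$; the constant term fixes the extra root $r=q\ol p/(p\ol q\,\l)$, and the $z^3$ and $z^1$ coefficients give an invertible $2\times2$ system (determinant $|q|^2-|p|^2\neq0$) for $\xi-t$ and $\ol\xi-\ol t$. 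Since $r\sigma_2=\dfrac{q\ol p}{p\ol q}\Bigl(\dfrac{fg}{\l}+(\ol f+\ol g)\Bigr)$ and $\sigma_3+r$, $\sigma_1$ contribute only $\l^{\pm1}$ and constants, the solution indeed collapses to $u_3\l+v_3/\l+w_3$. This buys you a proof that is CAS-free and makes transparent \emph{why} only $\l^{\pm1}$ survive; the paper's approach buys the explicit coefficients $u,v,w$ in one stroke. Your handling of the edge cases ($q=0$ reducing to Proposition~\ref{prop:LinComb-concentric}, $|p|=|q|$ being a degenerate outer ellipse) is also sound.
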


\begin{proof}

Consider a general $N=3$ Poncelet pair of ellipses that forms a 1-parameter family of triangles. Without loss of generality, by translation and rotation, we may assume the outer ellipse is centered at the origin and axis-aligned with the plane $\R^2$, which we will also identify with the complex plane $\mathbb{C}$. Let $a,b$ be the semi-axis of the outer ellipse, and $a_c,b_c$ the semi-axis of the inner ellipse, as usual. 

Referring to Figure~\ref{fig:affine}, consider the linear transformation that takes $(x,y)\mapsto(x/a,y/b)$. This transformation takes the outer ellipse to the unit circle $\T$ and the inner ellipse to another ellipse. Thus, it transforms the general Poncelet $N=3$ system into a pair where the outer ellipse is the circumcircle, which we can parametrize using Blaschke products \cite{daepp-2019}. In fact, to get back to the original system, we must apply the inverse transformation that takes $(x,y)\mapsto(a x,b y)$. As a linear transformation from $\mathbb{C}$ to $\mathbb{C}$, we can write it as $L(z):=p z+q \ol{z}$, where $p:=(a+b)/2, q:=(a-b)/2$.

\begin{figure}
    \centering
    \includegraphics[width=\textwidth]{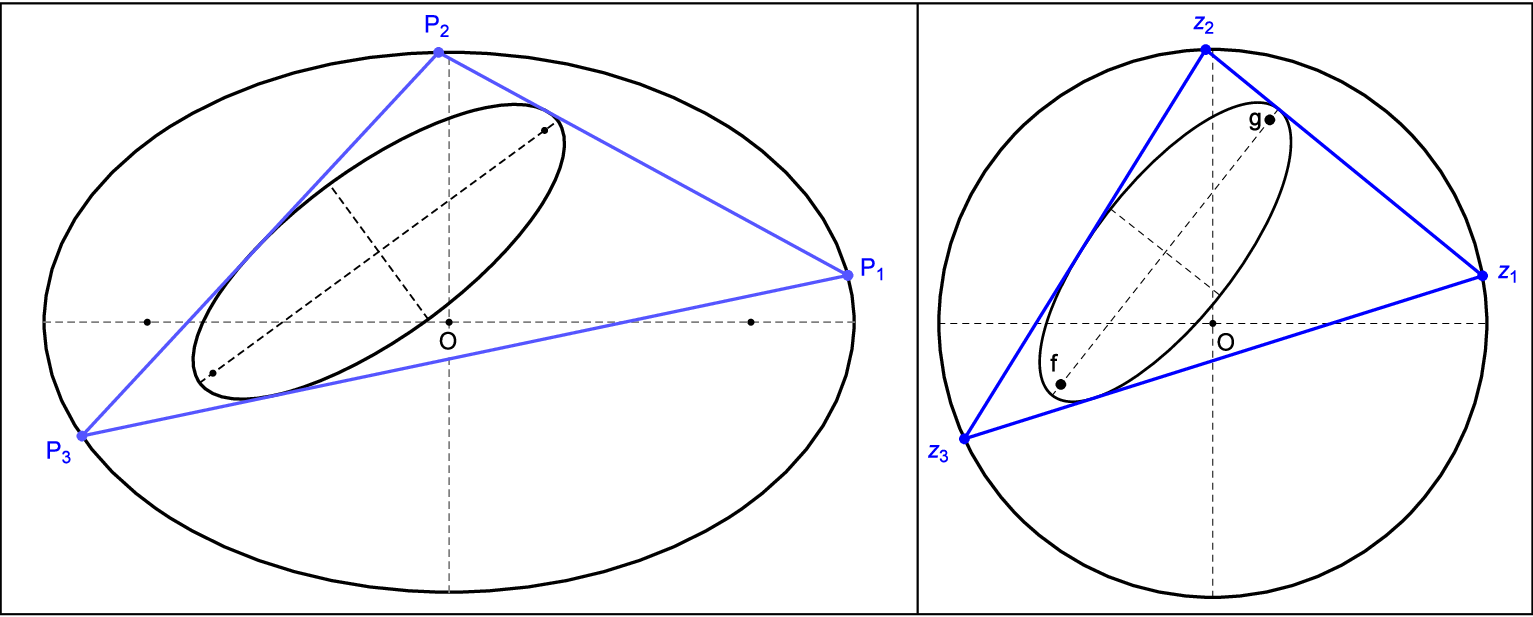}
    \caption{Affine transformation that sends a generic ellipse pair and its 3-periodic family (left) to a new pair with circumcircle (right). We parametrize the 3-periodic orbit with vertices $z_i$ in the circumcircle pair using the foci of the latter's caustic $f$ and $g$, and then apply the inverse affine transformation to get a parametrization of the vertices $P_i$ of the original Poncelet pair. \href{https://youtu.be/6xSFBLWIkTM}{Video}}
    \label{fig:affine}
\end{figure}

Let $z_1,z_2,z_3\in\T\subset\mathbb{C}$ be the three vertices of the circumcircle family, parametrized as in Definition~\ref{def:bla}, and let $v_1:=L(z_1),v_2:=L(z_2),v_3:=L(z_3)$ be the three vertices of the original general family. The barycenter $X_2$ of the original family is given by $(v_1+v_2+v_3)/3$, and the circumcenter $X_3$ is given by \cite{stackexchange-x3a}:

\[
    X_3=\left|
        \begin{array}{ccc}
          v_1 & |v_1|^2 & 1 \\
          v_2 & |v_2|^2 & 1 \\
          v_3 & |v_3|^2 & 1
        \end{array}
      \right| \Bigg/
     \left|
        \begin{array}{ccc}
          v_1 & \overline{v_1} & 1 \\
          v_2 & \overline{v_2} & 1 \\
          v_3 & \overline{v_3} & 1
        \end{array}
      \right|
\]

Since $\ol{z_1}=1/z_1,\ol{z_2}=1/z_2,\ol{z_3}=1/z_3$, we can write $v_1,v_2,v_3$ as rational functions of $z_1,z_2,z_3$, respectively. Thus, both $X_2$ and $X_3$ are symmetric rational functions on $z_1,z_2,z_3$. Defining $\X\ab=\alpha X_2+\beta X_3$, we have consequently that $\X\ab$ is also a symmetric rational function on $z_1,z_2,z_3$. Hence, we can reduce its numerator and denominator to functions on the elementary symmetric polynomials on $z_1,z_2,z_3$. This is exactly what we need in order to use the parametrization by Blaschke products.

In fact, we explicitly compute:
\[  \X\ab= \frac{p^2 q \left(\sigma_2 (\alpha +3 \beta )+3 \beta  \sigma_3^2\right)+\alpha  p^3 \sigma_1 \sigma_3-p q^2 (3 \beta +\sigma_1 \sigma_3 (\alpha +3 \beta ))-\alpha  q^3 \sigma_2}{3 \sigma_3 (p-q) (p+q)}\]
where $\sigma_1,\sigma_2,\sigma_3$ are the elementary symmetric polynomials on $z_1,z_2,z_3$.

Let $f,g\in\mathbb{C}$ be the foci of the inner ellipse in the circumcircle system. Using Definition~\ref{def:bla}, with the parameter $\l$ varying on the unit circle $\T$, we get:

\begin{equation}
\X\ab= u \l+v\frac{1}{\l}+w
\label{eqn:xi-param}
\end{equation}

\noindent where:

\begin{align*}
    u:=&\frac{p \left(\ol{f} \ol{g} \left(\alpha  p^2-q^2 (\alpha +3 \beta )\right)+3 \beta  p q\right)}{3 (p-q) (p+q)}\\
    v:=&\frac{\beta  p q (q-f g p)}{(q-p) (p+q)}+\frac{1}{3} \alpha  f g q\\
    w:=&\frac{q \left(\ol{f}+\ol{g}\right) \left(p^2 (\alpha +3 \beta )-\alpha  q^2\right)+p (f+g) \left(\alpha  p^2-q^2 (\alpha +3 \beta )\right)}{3 (p-q) (p+q)}
\end{align*}

By Lemma \ref{lem:ell-param}, this is the parametrization of an ellipse centered at $w$, as desired. As in Lemma \ref{lem:ell-param}, it is also possible to explicitly calculate its axis and rotation angle, but these expressions become very long.

\end{proof}

\begin{corollary}
Over the family of 3-periodics interscribed in an ellipse pair in general position (non-concentric, non-axis-aligned),
if $\X_\gamma$ is a real affine combination of $X_2$ and $X_3$, i.e., $\X_\gamma=(1-\gamma) X_2+\gamma X_3$ for some fixed $\gamma\in\R$, then its locus is an ellipse. Moreover, as we vary $\gamma$, the centers of the loci of the $\X_\gamma$ are collinear.
\end{corollary}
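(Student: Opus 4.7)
The plan is to derive both conclusions as direct consequences of Theorem~\ref{thm:ellipse-locus}. Specializing the theorem to $\alpha = 1-\gamma$ and $\beta = \gamma$ with $\gamma \in \R$ recovers exactly the affine combination $\X_\gamma = (1-\gamma)X_2 + \gamma X_3$. Since the theorem already guarantees that the locus of $\X\ab = \alpha X_2 + \beta X_3$ is an ellipse for every fixed $\alpha,\beta\in\Cp$, the elliptic-locus portion of the corollary is immediate: real $\gamma$ is merely a one-parameter slice of the two-complex-parameter family already handled, and no new argument is needed.

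For the collinearity claim, the approach is to look directly at the center $w$ of the locus ellipse, read off from the parametrization \eqref{eqn:xi-param} via Lemma~\ref{lem:ell-param}. The explicit formula produced in the proof of Theorem~\ref{thm:ellipse-locus} is
\[
w = \frac{q(\ol{f}+\ol{g})\bigl(p^2(\alpha+3\beta) - \alpha q^2\bigr) + p(f+g)\bigl(\alpha p^2 - q^2(\alpha+3\beta)\bigr)}{3(p-q)(p+q)},
\]
and the crucial structural observation is that $w$ depends on the pair $(\alpha,\beta)$ only through the two quantities $\alpha$ and $\alpha+3\beta$. Substituting $\alpha = 1-\gamma$ and $\beta = \gamma$ turns these into $1-\gamma$ and $1+2\gamma$, both affine in the single real scalar $\gamma$. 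Therefore $w(\gamma)$ is itself an affine map $\gamma \mapsto A + \gamma B$ with $A, B \in \Cp$ depending only on $a, b, f, g$, i.e., only on the ambient ellipse pair $\E, \E_c$. As $\gamma$ runs over $\R$, the image is a straight line in $\Cp \cong \R^2$, collapsing to a single point in the degenerate case $B=0$, in which collinearity holds vacuously.

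I do not anticipate any real obstacle here: the corollary is essentially a one-line specialization of the theorem's formula for $w$, together with the elementary fact that an affine image of $\R$ in $\Cp$ is a line (or a point). The only mild bookkeeping is to notice the specific linear combination of $\alpha$ and $\alpha+3\beta$ in $w$, which makes the real-$\gamma$ dependence transparently affine; everything else has already been done inside the proof of Theorem~\ref{thm:ellipse-locus}.
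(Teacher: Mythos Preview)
Your proposal is correct and follows essentially the same route as the paper: specialize Theorem~\ref{thm:ellipse-locus} to $\alpha=1-\gamma$, $\beta=\gamma$ for the elliptic locus, then read off the center $w$ from \eqref{eqn:xi-param} and observe it is affine in $\gamma$, hence traces a line. The paper carries out the substitution explicitly and records the resulting $w_0,w_1$, whereas you argue structurally via the dependence on $\alpha$ and $\alpha+3\beta$; the content is the same.
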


\begin{proof}
Apply Theorem \ref{thm:ellipse-locus} with $\alpha=1-\gamma, \beta=\gamma$ to get the elliptical loci. As in the end of the proof of Theorem \ref{thm:ellipse-locus}, the center of the locus of $\X_\gamma$ can be computed explicitly as 
\begin{gather*}
    w=w_0+w_1 \gamma \text{, where}\\
    w_0=\frac{1}{3} \left(q \left(\ol{f}+\ol{g}\right)+p (f+g)\right)\\
    w_1=\frac{q \left(2 p^2+q^2\right) \left(\ol{f}+\ol{g}\right)-p (f+g) \left(p^2+2 q^2\right)}{3 (p-q) (p+q)}
\end{gather*}
As $\gamma\in\R$ varies, it is clear the center $w$ sweeps a line.
\end{proof}

We proved that all of the following triangle centers have elliptic loci in the general N=3 Poncelet system, including the barycenter, circumcenter, orthocenter, nine-point center, and de Longchamps point:

\begin{observation}
Amongst the 40k+ centers listed on \cite{etc}, about 4.9k triangle centers lie on the Euler line \cite{etc-central-lines}. Out of these, only 226 are fixed affine combinations of $X_2$ and $X_3$. For $k<1000$, these amount to $X_k,k=${\small 2, 3, 4, 5, 20, 140, 376, 381, 382, 546, 547, 548, 549, 550, 631, 
632}.
\label{obs:affine-euler-line}
\end{observation}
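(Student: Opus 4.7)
The observation is a computational enumeration claim over Kimberling's Encyclopedia of Triangle Centers \cite{etc}, and my plan is to verify it through a symbolic pipeline in a computer algebra system. Every center $X_k$ listed in \cite{etc} is specified by a triangle-center function, i.e., a homogeneous barycentric triple $[x_k(a,b,c):y_k(a,b,c):z_k(a,b,c)]$ transforming appropriately under cyclic relabeling of sidelengths. Normalizing by the coordinate-sum yields affine representatives which can be compared against $X_2=[1:1:1]$ and $X_3=[a^2 S_A:b^2 S_B:c^2 S_C]$ with $S_A=b^2+c^2-a^2$ (and cyclically for $S_B,S_C$).

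The verification proceeds in two filtering stages. First, an Euler-line filter: $X_k$ lies on the Euler line precisely when the $3\times 3$ determinant of the normalized coordinates of $(X_2,X_3,X_k)$ vanishes identically as a rational function of $(a,b,c)$. This is a standard CAS test (Gr\"obner reduction, or exact simplification after clearing denominators) and should reproduce the $\sim\!4.9$k Euler-line centers recorded in \cite{etc-central-lines}. Second, for each such center I would write $X_k=(1-\gamma_k)X_2+\gamma_k X_3$ in affine barycentrics and solve algebraically for $\gamma_k(a,b,c)$; then flag $X_k$ as a fixed affine combination exactly when the fully simplified $\gamma_k$ is constant, equivalently when its partial derivatives in $a,b,c$ vanish. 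Restricting the resulting list to indices $k<1000$ should produce the 16 centers reported in the statement.

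The main obstacle is algorithmic rather than mathematical. Entries in \cite{etc} are encoded in heterogeneous forms (polynomial in sidelengths, trigonometric in the angles, or via auxiliary scalars such as circumradius, inradius, or area), so each entry must first be coerced into a common canonical form before the symbolic tests apply. A practical safeguard against both false positives and false negatives is to evaluate $\gamma_k$ numerically at several randomly generated non-degenerate triangles, confirm constancy to high working precision, and only then certify symbolically via a normal-form reduction modulo the sidelength-symmetric ideal. No new mathematics is required beyond Theorem~\ref{thm:ellipse-locus}; the content of the observation lies entirely in the scale and curation of the enumeration.
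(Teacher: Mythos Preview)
Your proposal is correct, and in fact goes well beyond what the paper itself provides: the paper states this as an \emph{observation} with no accompanying proof or description of methodology, treating the counts and the list for $k<1000$ as empirical facts mined from \cite{etc} and \cite{etc-central-lines}. Your two-stage symbolic pipeline (Euler-line collinearity test followed by constancy of the affine parameter $\gamma_k$) is exactly the computation one would need to perform to certify the claim, and your remarks on canonicalizing heterogeneous ETC entries and using numerical pre-screening are sensible engineering precautions that the paper does not discuss at all.
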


\begin{figure}
     \centering
     \includegraphics[width=.8\textwidth]{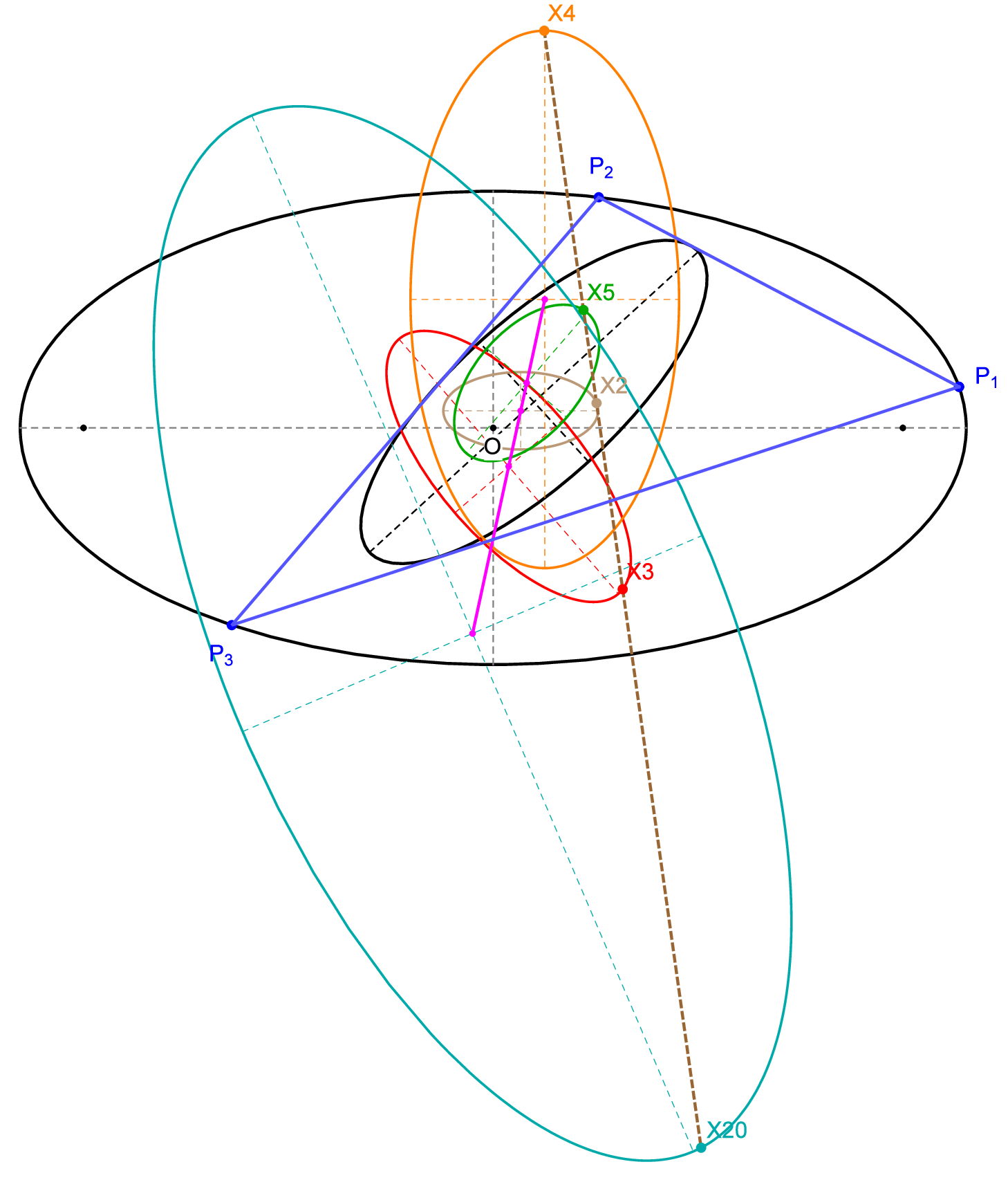}
     \caption{A 3-periodic is shown interscribed between two nonconcentric, non-aligned ellipses (black). The loci of $X_k$, $k=2,3,4,5,20$ (and many others) remain ellipses. Those of $X_2$ and $X_4$ remain axis-aligned with the outer one. Furthermore the centers of all said elliptic loci are collinear (magenta line). \href{https://youtu.be/p1medAei_As}{Video}}
     \label{fig:nonconcentric-xns}
 \end{figure}
 
 
\begin{observation}
The elliptic loci of $X_2$ and $X_4$ are axis-aligned with the outer ellipse.
\end{observation}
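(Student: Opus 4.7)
The plan is to invoke the parametrization
\[
\X\ab(\l) = u\l + \frac{v}{\l} + w
\]
established in the proof of Theorem~\ref{thm:ellipse-locus}, together with the tilt statement of Lemma~\ref{lem:ell-param}: the resulting ellipse makes an angle $(\arg u + \arg v)/2$ with the coordinate axes. The normalization chosen in that proof places the outer ellipse axis-aligned with the real/imaginary axes of $\Cp$, so axis-alignment of a locus with the outer ellipse amounts to $\arg u + \arg v \in \pi\mathbb{Z}$, i.e.\ to $uv \in \R$. Since the scalars $p = (a+b)/2$ and $q = (a-b)/2$ entering the transformation $L$ are real, the task collapses to a direct check that $uv \in \R$ under the substitutions for $X_2$ and for $X_4$.

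For $X_2$ one takes $(\alpha,\beta) = (1,0)$, whereupon the formulas for $u$ and $v$ in the proof of Theorem~\ref{thm:ellipse-locus} collapse cleanly to $u = p\ol{f}\ol{g}/3$ and $v = qfg/3$, so that $uv = pq|fg|^2/9 \in \R$. For $X_4$, I use the Euler line identity $X_4 = 3X_2 - 2X_3$, which corresponds to $(\alpha,\beta)=(3,-2)$. A short simplification yields
\[
u = \frac{p\bigl(\ol{f}\ol{g}(p^2+q^2) - 2pq\bigr)}{p^2-q^2}, \qquad v = \frac{-q\bigl(fg(p^2+q^2) - 2pq\bigr)}{p^2-q^2}.
\]
Setting $A := p^2+q^2$ and $B := 2pq$, one gets $uv = -pq(\ol{f}\ol{g}A-B)(fgA-B)/(p^2-q^2)^2$; the two bracketed factors are complex conjugates of each other, so their product expands to $A^2|fg|^2 - AB(fg+\ol{f}\ol{g}) + B^2$, which is real because $fg + \ol{f}\ol{g} = 2\operatorname{Re}(fg)$.

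The step that requires the most care is the $X_4$ computation, where the substitution $(\alpha,\beta)=(3,-2)$ interacts nontrivially with the denominator $3(p-q)(p+q)$ appearing in $u$ and $v$, but once one observes that the bracketed factors in $u$ and $v$ differ only by a conjugation (and by the real scalars $p$ and $-q$), reality of $uv$ is immediate. No ideas beyond Theorem~\ref{thm:ellipse-locus} and Lemma~\ref{lem:ell-param} are needed; the key structural point is that the real-valued scalars $p,q$ from the axis-aligned normalization force $u$ and $v$ to be complex conjugates up to a real factor.
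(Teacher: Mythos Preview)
Your argument is correct. The paper states this only as an Observation and offers no proof at all, so your write-up in fact supplies what the paper omits. The route you take---computing $u$ and $v$ from the formulas in the proof of Theorem~\ref{thm:ellipse-locus} and then invoking the tilt clause of Lemma~\ref{lem:ell-param} to reduce axis-alignment to $uv\in\R$---is exactly the natural one given the machinery already in place. Your computations check out: for $(\alpha,\beta)=(1,0)$ one indeed gets $u=p\ol f\ol g/3$, $v=qfg/3$, and for $(\alpha,\beta)=(3,-2)$ the simplification to $u=p(\ol f\ol g A-B)/(p^2-q^2)$, $v=-q(fgA-B)/(p^2-q^2)$ with $A=p^2+q^2$, $B=2pq$ is correct, whence $uv\in\R$ follows since the two bracketed factors are conjugate. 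One tiny remark: in the degenerate case $u=0$ or $v=0$ the quantity $\arg u+\arg v$ is undefined, but Lemma~\ref{lem:ell-param} handles this directly (the locus is then a circle), so no harm is done.
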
 

We conclude this section with phenomenon specific to the case where $\E_c$ is a circle, Figure~\ref{fig:circular-caustic}:

\begin{observation}
 Over the family of 3-periodics inscribed in an ellipse and circumscribing a non-concentric circle centered on $O_c=X_1$, the locus of $X_3$ and $X_5$ are ellipses whose major axes pass through $X_1$.
 \end{observation}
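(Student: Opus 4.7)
The locus being an ellipse follows from Theorem~\ref{thm:ellipse-locus} applied to $X_3=0\cdot X_2+1\cdot X_3$ and $X_5=\tfrac{3}{2} X_2-\tfrac{1}{2} X_3$. Since $\E_c$ is a circle, it is automatically the incircle of every 3-periodic, so $X_1=O_c$ is stationary; the content of the observation is the alignment of the major axis of each locus with the line through the locus center and $X_1$.

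My plan is to use the Blaschke--affine parametrization from the proof of Theorem~\ref{thm:ellipse-locus}. Take $\E$ centered at the origin with semi-axes $a>b$ along the coordinate axes, so $L(z)=pz+q\bar{z}$ with $p=(a+b)/2,\, q=(a-b)/2$ maps the unit disk onto $\E$. The key input is that $\E_c$ is a circle of radius $r$ centered at $O_c=x_c+iy_c$, so its $L^{-1}$-preimage is an axis-aligned ellipse in circumcircle coordinates centered at $L^{-1}(O_c)=x_c/a+iy_c/b$, with semi-axes $(r/a,r/b)$ and foci $f=L^{-1}(O_c)+ic',\, g=L^{-1}(O_c)-ic'$ where $c'=r\sqrt{a^2-b^2}/(ab)\in\R$.

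I would then substitute $(\alpha,\beta)=(0,1)$ and $(3/2,-1/2)$ into the closed forms for $u,v,w$ from the proof of Theorem~\ref{thm:ellipse-locus}, and, using $O_c=\tfrac{1}{2}(p(f+g)+q(\bar{f}+\bar{g}))$, simplify to obtain
\[
O_c-w_{X_3}=\tfrac{a^2+b^2}{2}\!\left(\tfrac{x_c}{a^2}+i\tfrac{y_c}{b^2}\right),\qquad
O_c-w_{X_5}=\tfrac{a^2-b^2}{4}\!\left(\tfrac{x_c}{a^2}-i\tfrac{y_c}{b^2}\right).
\]
By Lemma~\ref{lem:ell-param}, $O_c-w$ lies on the major axis of the locus iff $(O_c-w)^2/(uv)$ is a positive real number. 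For $X_3$, one finds $u_{X_3}v_{X_3}$, up to a positive scalar, equals $(p^2+q^2)\mathrm{Re}(fg)-pq(1+|fg|^2)+i(p^2-q^2)\mathrm{Im}(fg)$; the imaginary part is immediately $2x_cy_c$, and I expect the real part to collapse to $b^2x_c^2/a^2-a^2y_c^2/b^2$ after eliminating $|fg|^2$ via the Cayley condition~\eqref{eqn:cayley} specialized to $a_c=b_c=r,\,c_c=0$. The upshot would be the clean factorization $u_{X_3}v_{X_3}\propto(b^2x_c+ia^2y_c)^2$, with $(O_c-w_{X_3})^2$ proportional to the same square by a positive real factor, so their ratio is manifestly positive real. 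The $X_5$ case then follows by symmetry: one checks $u_{X_5}v_{X_5}=\tfrac{1}{4}\,\overline{u_{X_3}v_{X_3}}$ and $(O_c-w_{X_5})^2\propto\overline{(O_c-w_{X_3})^2}$ (each with positive real factors), so the ratio stays positive real.

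The hard part will be the Cayley-mediated simplification of $\mathrm{Re}(u_{X_3}v_{X_3})$: one must eliminate $|fg|^2$ using the full Cayley condition~\eqref{eqn:cayley} after setting $a_c=b_c=r$ and $c_c=0$, a mechanical but lengthy CAS computation. The WLOG assumption $a>b$ is harmless; the case $a<b$ is symmetric (foci now horizontal), and the degenerate case $a=b$ is covered in Section~\ref{sec:nonconcentric-circumcircle}.
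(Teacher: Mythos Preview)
The paper offers no proof of this statement: it is labeled an \emph{Observation}, which in this paper's convention denotes a numerically observed phenomenon, not a proved result. Your proposal therefore goes strictly beyond what the paper does.

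Your outline is sound. I checked the key intermediate formulas: with $p=(a+b)/2$, $q=(a-b)/2$, and $f,g=L^{-1}(O_c)\pm ic'$, one indeed gets
\[
O_c-w_{X_3}=\tfrac{a^2+b^2}{2}\bigl(\tfrac{x_c}{a^2}+i\tfrac{y_c}{b^2}\bigr),\qquad
O_c-w_{X_5}=\tfrac{a^2-b^2}{4}\bigl(\tfrac{x_c}{a^2}-i\tfrac{y_c}{b^2}\bigr),
\]
and
\[
u_{X_3}v_{X_3}=\frac{p^2q^2}{(p^2-q^2)^2}\bigl[p^2fg+q^2\overline{fg}-pq(1+|fg|^2)\bigr],\qquad
u_{X_5}v_{X_5}=\tfrac14\,\overline{u_{X_3}v_{X_3}}.
\]
Your major-axis criterion via Lemma~\ref{lem:ell-param} is correct, and once the real part of the bracket is shown to equal $b^2x_c^2/a^2-a^2y_c^2/b^2$, the bracket factors as $(bx_c/a+iay_c/b)^2$ and the ratio $(O_c-w_{X_3})^2/(u_{X_3}v_{X_3})$ evaluates to $4(a^2+b^2)^2/(a^2-b^2)^2>0$; similarly $(O_c-w_{X_5})^2/(u_{X_5}v_{X_5})=4a^2b^2>0$. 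So positivity, which you need for the \emph{major} rather than minor axis, comes for free.

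The only substantive step you defer is the Cayley-mediated identity for $\mathrm{Re}(u_{X_3}v_{X_3})$. As a sanity check, in the concentric case $x_c=y_c=0$ with $r=ab/(a+b)$ one has $c'^2=(a-b)/(a+b)$, and the identity $(p^2+q^2)c'^2=pq(1+c'^4)$ reduces to $\tfrac{(a^2+b^2)(a-b)}{2(a+b)}$ on both sides, so it holds. The general case is, as you say, a mechanical elimination using the specialization $a_c=b_c=r$, $c_c=0$ of \eqref{eqn:cayley}.
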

 
\begin{figure}
    \centering
    \includegraphics[width=.7\textwidth]{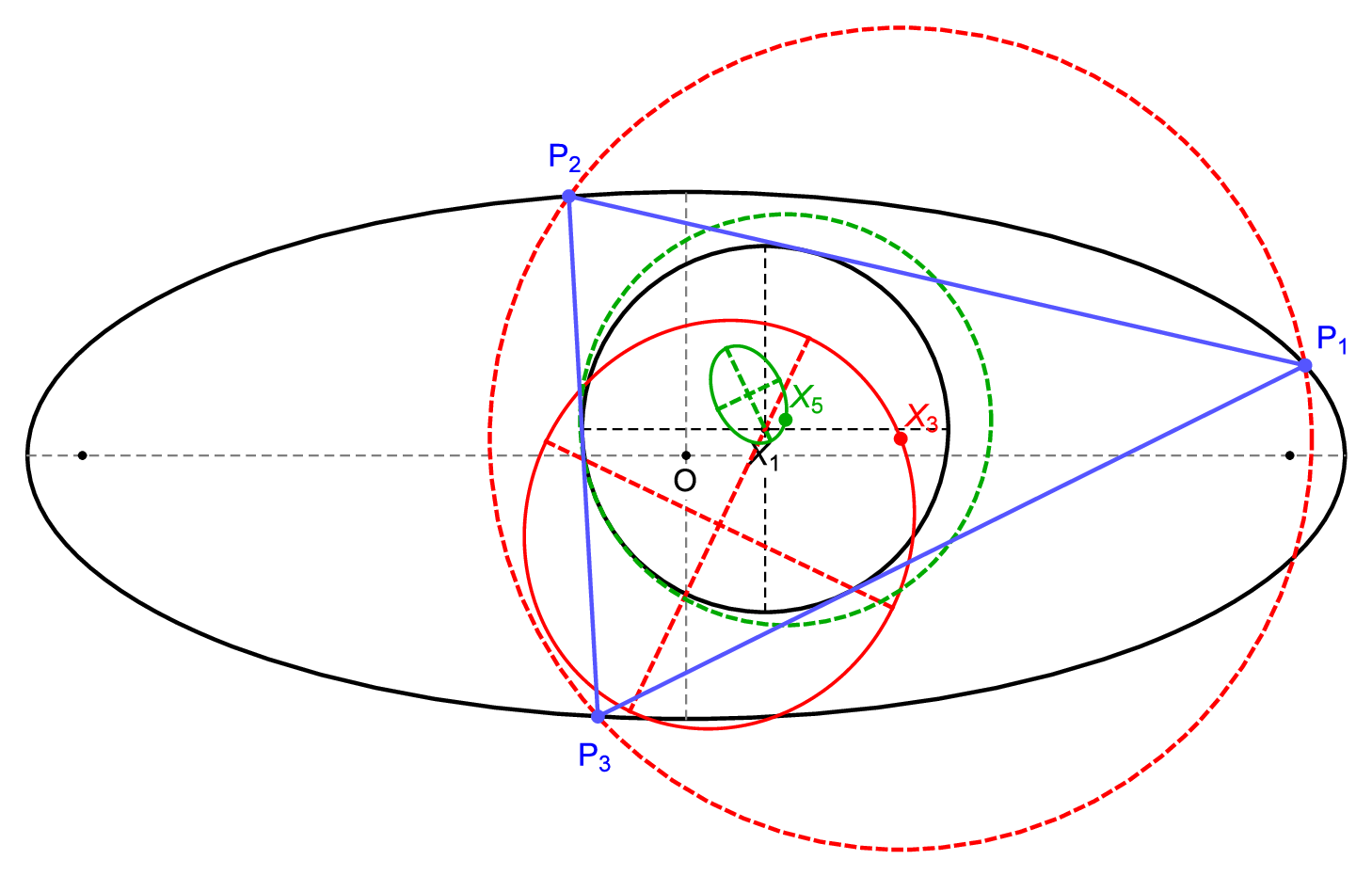}
    \caption{A 3-periodic (blue) is shown inscribed in an outer ellipse and an inner non-concentric circle centered on $O_c$. The loci of both circumcenter (solid red) and Euler center (solid green) are ellipses whose major axes pass through $O_c$. \href{https://youtu.be/w7sZ5O8k4xU}{Video}}
    \label{fig:circular-caustic}
\end{figure}

\section{Conjectures and Videos}
\label{sec:open-questions}
Referring to Figure~\ref{fig:p3p5-non-concentric}:

\begin{conjecture}
Over 3-periodics in the non-concentric, non-axis-aligned pair, there is some fixed point $P_3$ (resp. $P_5$) such that its power with respect to the circumcircle (resp. Euler circle) is invariant. 
\end{conjecture}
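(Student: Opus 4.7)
The plan is to extend the Blaschke-product parametrization of Section~\ref{sec:nonconcentric-tilted} to express the power of a candidate fixed point $P$ as a Laurent polynomial in the Poncelet parameter $\l\in\T$, and then show that its non-constant coefficients can be made to vanish by a suitable choice of $P$.

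First I would set up as in the proof of Theorem~\ref{thm:ellipse-locus}: reduce to the pair with circumcircle via the affine map $L(z)=pz+q\ol{z}$, with $p=(a+b)/2$, $q=(a-b)/2$, so that vertices become $v_i=L(z_i)$ where the $z_i\in\T$ are the three roots of $B(z)=\l$ and Definition~\ref{def:bla} applies. Direct computation (or specialization of the formula in the proof of Theorem~\ref{thm:ellipse-locus} to $(\alpha,\beta)=(1,0)$ and $(0,1)$) writes
\[ X_2 = \alpha_2\l + \beta_2\ol{\l} + \gamma_2, \qquad X_3 = A_3\l + B_3\ol{\l} + C_3 \]
as Laurent polynomials of total degree at most one in $\l^{\pm1}$, with explicit coefficients in $p,q,f,g$ (for instance $\alpha_2=p\ol{fg}/3$, $\beta_2=qfg/3$); the same holds for $X_5=\tfrac{3}{2}X_2-\tfrac{1}{2}X_3$.

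Next, for any fixed $P\in\Cp$ and any vertex $v_i$ (resp.\ midpoint $m_i=(v_j+v_k)/2$), one has $\P_P(X_3,R)=|P-X_3|^2-|v_i-X_3|^2$ (resp.\ $\P_P(X_5,R/2)=|P-X_5|^2-|m_i-X_5|^2$). Averaging over $i$, and using $\sum_i m_i=3X_2$ together with $\sum_i|m_i|^2=\tfrac14(\sum_i|v_i|^2+9|X_2|^2)$, yields
\[ \P_P(X_3,R) = |P|^2 - \tfrac{1}{3}\sum_i|v_i|^2 + (X_2-P)\ol{X_3} + (\ol{X_2}-\ol{P})X_3, \]
together with the analogous expression for the Euler circle (with $X_5$ in place of $X_3$ and $\sum|m_i|^2$ in place of $\sum|v_i|^2$). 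Since $|v_i|^2=p^2+q^2+pq(z_i^2+\ol{z_i}^2)$ on the unit circle, these are \emph{a priori} Laurent polynomials in $\l$ of total degree at most $2$ in $\l^{\pm1}$; the crucial step is to check that the $\l^{\pm2}$ coefficients vanish identically, independently of $P$. This cancellation is an algebraic identity in the Blaschke data: the $\l^{2}$ contribution $-pq\,\ol{f}^{\,2}\ol{g}^{\,2}/3$ from $-\tfrac13\sum|v_i|^2$ is exactly balanced by $\alpha_2\ol{B_3}+\ol{\beta_2}A_3=pq\,\ol{f}^{\,2}\ol{g}^{\,2}/3$ coming from $X_2\ol{X_3}+\ol{X_2}X_3$; the Euler case reduces to the same identity augmented with $\alpha_2\ol{\beta_2}=pq\,\ol{f}^{\,2}\ol{g}^{\,2}/9$.

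Once the $\l^{\pm2}$ terms are gone, requiring the $\l^{\pm1}$ coefficient to vanish becomes a single complex equation of the form $P\ol{B_3}+\ol{P}A_3=E$ (for the circumcircle) together with its conjugate, a $2\times 2$ real-linear system in $(P,\ol{P})$. Its determinant $|B_3|^2-|A_3|^2$ simplifies to a nonzero multiple of $|fg|^2-1$, which is strictly negative since the Blaschke foci $f,g$ lie in the open unit disk. Thus the system has a unique solution yielding the desired $P_3$, and the same argument applied with $X_5$ in place of $X_3$ produces $P_5$. The main technical hurdle is the $\l^{\pm2}$ cancellation: while automatic, verifying it requires careful bookkeeping of the monomials in $f,g,\ol{f},\ol{g}$; the remaining $2\times 2$ inversion is routine.
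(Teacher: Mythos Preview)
The paper does not prove this statement: it is presented as an open conjecture in Section~\ref{sec:open-questions}, supported only by the numerical evidence sketched in Figure~\ref{fig:p3p5-non-concentric}. So there is no ``paper's own proof'' to compare against; your proposal, if correct, would actually settle the conjecture rather than reproduce an existing argument.

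Your strategy is sound and the key computations check out. The affine reduction and Blaschke parametrization give $X_2$ and $X_3$ (hence $X_5$) as degree-one Laurent polynomials in $\l$, exactly as in the proof of Theorem~\ref{thm:ellipse-locus}, and your averaged power formula is correct. The crucial $\l^{\pm2}$ cancellation holds: with $\alpha_2=p\ol{fg}/3$, $\beta_2=qfg/3$, $A_3=pq(p-q\ol{fg})/(p^2-q^2)$, $B_3=pq(pfg-q)/(p^2-q^2)$, one finds $\alpha_2\ol{B_3}+\ol{\beta_2}A_3=\tfrac{pq}{3}\ol{f}^{\,2}\ol{g}^{\,2}$, which exactly cancels the $\l^2$ term coming from $-\tfrac13\sum|v_i|^2$; the Euler case works analogously. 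For $P_3$ the determinant is $|B_3|^2-|A_3|^2=\dfrac{p^2q^2(|fg|^2-1)}{p^2-q^2}$, indeed nonzero whenever the outer conic is a genuine ellipse ($q\neq0$), since $|f|,|g|<1$.

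One genuine gap remains in the $P_5$ case. You assert that ``the same argument'' yields a unique $P_5$, but the relevant determinant $|B_5|^2-|A_5|^2$, with $A_5=\tfrac{p^2(p\ol{fg}-q)}{2(p^2-q^2)}$ and $B_5=\tfrac{q^2(p-qfg)}{2(p^2-q^2)}$, does \emph{not} factor through $|fg|^2-1$. It works out to a multiple of
\[
|fg|^2(p^4+p^2q^2+q^4)-2pq(p^2+q^2)\,\mathrm{Re}(fg)+p^2q^2,
\]
which vanishes, for instance, when $fg$ is real positive and $|fg|=pq/(p^2\pm pq+q^2)\in(0,1)$. Such configurations are compatible with the pair being non-concentric and non-axis-aligned. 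In those borderline cases the map $P\mapsto P\ol{B_5}+\ol{P}A_5$ drops rank, and you would still need to show that the right-hand side $E_5$ lies in its image; otherwise existence of $P_5$ is not established there. For the generic pair your argument is complete, so this is a narrow but real point to address.
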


\begin{figure}
    \centering
    \includegraphics[width=.6\textwidth]{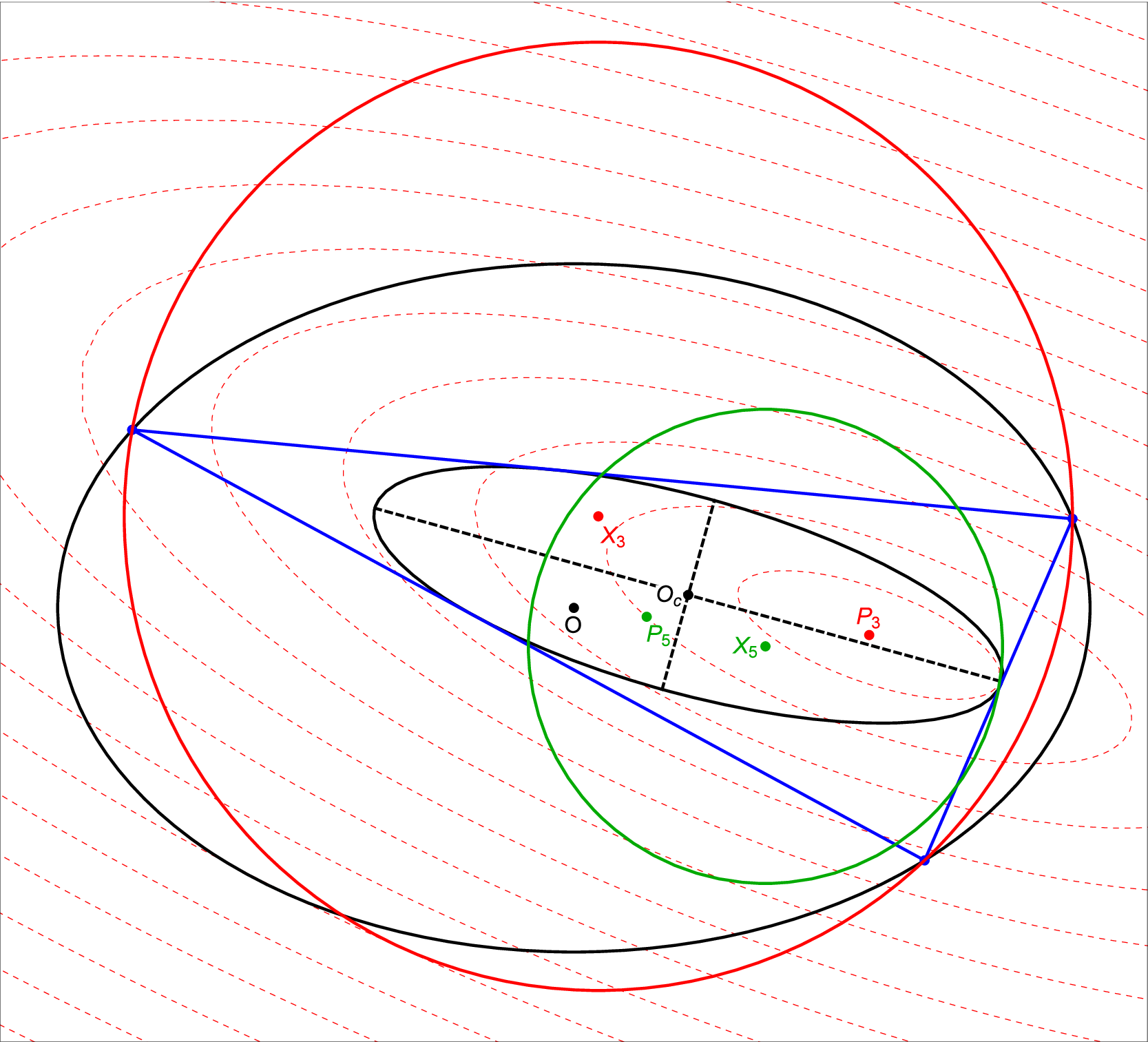}
    \caption{Consider a 3-periodics (blue) in a pair of ellipses in general position (centers at $O$ and $O_c$), as well as its circumcircle (red, center $X_3$) and Euler's circle (green, center $X_5$). A point $P_3$ can be numerically located whose power to the circumcircle (solid red) is invariant over the 3-periodic family. Iso-curves of the variance of power of $(x,y)$ with respect to the circumcircle are shown (dashed red): the minimum (and zero) variance occurs at $P_3$. An analogous numeric approach is used to locate the point $P_5$ whose power wrt Euler's circle (solid green) is invariant.}
    \label{fig:p3p5-non-concentric}
\end{figure}

In \cite{olga14,garcia2019-incenter} it is shown that in the confocal pair the locus of the incenter $X_1$ is an ellipse.

Experimentally, we can strengthen Proposition~\ref{prop:concentric}: 

\begin{conjecture}
If the locus of a triangle center $\X$ is an ellipse for all concentric, non-axis-aligned ellipse pairs, then $\X$ is a fixed linear combination of $X_2$ and $X_3$. 
\label{conj:concentric}
\end{conjecture}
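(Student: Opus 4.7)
The plan is to adapt the Blaschke-product framework of Theorem~\ref{thm:ellipse-locus} to the concentric setting and convert the elliptic-locus hypothesis into an algebraic constraint strong enough to force $\X=\alpha X_2+\beta X_3$. For any concentric pair, the affine reduction $(x,y)\mapsto(x/a,\,y/b)$ sends the outer ellipse to $\T$ while keeping the inner ellipse origin-centered, so its caustic foci are of the form $\{f,-f\}$ for some $f\in\D$. Setting $g=-f$ in Definition~\ref{def:bla} collapses the elementary symmetric polynomials to $\sigma_1=-\l\,\ol{f}^{\,2}$, $\sigma_2=-f^{2}$, $\sigma_3=\l$. Any triangle center is a symmetric (rational) function of the three vertices $z_1,z_2,z_3$; using $\ol{z_i}=1/z_i$ on $\T$ to eliminate conjugates, and then applying the inverse affine map $L(z)=p\,z+q\,\ol{z}$ with $p=(a+b)/2$, $q=(a-b)/2$, one obtains for $\X$ an explicit rational function $R_{f,a,b}(\l)$ of a single complex variable, with $\l$ ranging over $\T$.

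The next step is to invoke a sharp form of Lemma~\ref{lem:ell-param}: a non-constant rational map $\T\to\mathbb{C}$ whose image is an ellipse (possibly degenerate) must have all Laurent coefficients in $\l$ vanish outside the range $\{-1,0,1\}$, and hence must take the form $R(\l)=u\l+v/\l+w$. The converse follows from matching the Fourier series of a rational parametrization against that of an ellipse; I would include a short self-contained proof. Applied to $R_{f,a,b}$, the hypothesis that the locus of $\X$ is an ellipse for \emph{every} concentric pair gives an open set of admissible $(a,b,f)$ over which the Laurent coefficients outside $\{-1,0,1\}$ must vanish identically. Each such coefficient is an explicit expression in $f,\ol{f},a,b$ whose form is dictated by the triangle-center function of $\X$.

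The final step is to pull this back to a structural statement on $\X$. The idea is to expand $\X$ as a sum of symmetric monomials $\sigma_1^{i}\,\ol{\sigma_1}^{\,i'}\sigma_2^{j}\,\ol{\sigma_2}^{\,j'}\sigma_3^{k}\,\ol{\sigma_3}^{\,k'}$, rewritten rationally in $\sigma_1,\sigma_2,\sigma_3$ via $\ol{\sigma_1}=\sigma_2/\sigma_3$, $\ol{\sigma_2}=\sigma_1/\sigma_3$, $\ol{\sigma_3}=1/\sigma_3$, substitute the concentric Blaschke values, and show that distinct monomials produce linearly independent harmonics in $\l$ as $(a,b,f)$ vary. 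This independence forces every coefficient outside the support of $X_2=\sigma_1/3$ and $X_3=0$ to vanish, leaving $\X=\alpha X_2+\beta X_3$ for scalars $\alpha,\beta$.

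The hardest part will be this linear-independence argument: distinct monomials can conspire to cancel each other's unwanted harmonics for specific parameter choices, which is precisely what makes $X_{99}$ elliptic over the pair with circumcircle and over the homothetic pair, but \emph{not} over the confocal pair (see the discussion after Proposition~\ref{prop:concentric}). Exploiting the full freedom of $(a,b,f)$ over the open admissible region is therefore essential: one must show that any cancellation occurring in one subfamily is inevitably broken in another. A secondary obstacle is that Kimberling's axioms for triangle centers do not force the center function to be rational, so handling transcendental centers would likely require an analyticity step, most naturally obtained from the real-analytic dependence of the 3-periodic orbit on the Poncelet parameter.
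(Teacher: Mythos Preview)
The paper offers no proof of this statement: it is listed as Conjecture~\ref{conj:concentric} in Section~\ref{sec:open-questions}, supported only by experimental evidence. So there is nothing to compare against, and your proposal should be read as an attempted attack on an open problem rather than a reconstruction of an existing argument.

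On its own terms, the sketch has two genuine gaps. First, your ``sharp form'' of Lemma~\ref{lem:ell-param} is false as stated: the rational map $\l\mapsto u\l^{n}+v\l^{-n}+w$ has an ellipse as image for every $n\geq 1$, and more elaborate rational maps (e.g.\ M\"obius reparametrizations of these) also have elliptic image without Laurent support in $\{-1,0,1\}$. To extract a Laurent constraint from the image alone you would need control on the mapping degree of $\l\mapsto\X$ onto its locus, and nothing in the Poncelet setup guarantees this degree is one. Second, the claim that ``any triangle center is a symmetric rational function of the vertices'' fails for most Kimberling centers: using $\ol{z_i}=1/z_i$ makes the \emph{squared} sidelengths rational in the $z_i$, but centers such as $X_1$, $X_7$, $X_8$ have barycentrics involving unsquared sidelengths and are genuinely algebraic (not rational) in the vertices. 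Your framework therefore only addresses the subclass of centers that are rational in $s_1^2,s_2^2,s_3^2$; the closing remark about ``analyticity'' does not bridge this. Finally, even granting both of these, the linear-independence step you flag as ``the hardest part'' is the entire content of the conjecture: the example of $X_{99}$ already shows that cancellations persist across large subfamilies, and you give no mechanism for proving they cannot persist over the full parameter space.
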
 

Our very first experimental result was that over 3-periodics in the confocal pair, the locus of the incenter $X_1$ was an ellipse \cite{reznik2011-incenter}. This was subsequently proved \cite{olga14,garcia2019-incenter}. Considering the space of choices of nested ellipse pairs is 5d (5 parameters for each minus 4d homothethy group, minus 1d Cayley condition), little did we know how rare a phenomenon that was (the space of confocal ellipses is a 1d): 

\begin{conjecture}
 The only pair of ellipses admitting Poncelet 3-periodics such that the locus of the incenter $X_1$ is an ellipse is the confocal pair.
\end{conjecture}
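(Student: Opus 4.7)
The plan is to build on the Blaschke product parametrization of Section~\ref{sec:nonconcentric-tilted} to compute the locus of $X_1$ as a 1-parameter algebraic curve, and then translate ``being an ellipse'' into a system of polynomial identities in the pair parameters whose vanishing locus is the confocal stratum. Recall that the moduli of Poncelet 3-periodic pairs is $5$-dimensional modulo similarities while the confocal stratum is $1$-dimensional, so the argument must ultimately extract four independent algebraic constraints.

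First, as in the proof of Theorem~\ref{thm:ellipse-locus}, affinely normalize so that the outer ellipse is the unit circle $\T$, and let $f,g\in\Cp$ be the foci of the image of the inner caustic; the original plane is recovered via $L^{-1}(z)=pz+q\ol{z}$ with $p=(a+b)/2$ and $q=(a-b)/2$. Vertices $v_i = L^{-1}(z_i)$ are parametrized by $\lambda\in\T$ via Definition~\ref{def:bla}. The incenter obeys $X_1 = (s_1 v_1 + s_2 v_2 + s_3 v_3)/(s_1+s_2+s_3)$ with $s_i = |v_{i+1}-v_{i-1}|$. Crucially, unlike $X_2$ and $X_3$, the $s_i$ are \emph{square roots} of rational symmetric functions of the $z_i$, so $X_1(\lambda)$ is algebraic but not rational in $\lambda$, and Theorem~\ref{thm:ellipse-locus} does not apply directly.

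Second, by Lemma~\ref{lem:ell-param}, any ellipse parametrized by $\lambda\in\T$ has the form $u\lambda + v/\lambda + w$, whose Fourier expansion in $\phi=\arg\lambda$ is supported only on modes $\{-1,0,+1\}$. On $\T$ each $s_i(\lambda)$ is a well-defined continuous real function, so $X_1(\lambda)$ admits a genuine Fourier series $\sum_{n\in\mathbb{Z}} c_n e^{in\phi}$. The plan is to compute $c_n$ for $|n|\ge 2$ either by Puiseux expansion of $X_1$ at $\lambda=0,\infty$ (extending each $s_i$ across its branch cuts in $\D$) or by eliminating radicals via resultants against the auxiliary equations $s_i^2 = |v_{i+1}-v_{i-1}|^2$, producing an ideal $I$ of polynomial relations in $(f,g,p,q)$ whose vanishing locus is exactly the set of pairs with elliptic $X_1$-locus. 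One then argues that $I$ cuts out the confocal stratum.

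The main obstacle is the sidelength square roots: symbolic manipulation blows up rapidly, extraneous branches introduced by squaring must be tracked, and the resulting polynomial system is likely enormous. Two streamlining strategies seem essential: (i) work only with the first excess Fourier coefficient $c_2$ (or equivalently the leading Puiseux term beyond the ellipse Ansatz) and hope that its vanishing alone already forces confocality, thereby avoiding combinatorial explosion; and (ii) specialize along one-parameter subfamilies with extra symmetry, such as isosceles 3-periodics or positions along a symmetry axis of the pair, to linearize the system. A secondary obstacle is ruling out spurious solutions: degenerate families such as the pair with incircle, pair with circumcircle, or the homothetic pair may satisfy low-order Fourier vanishings accidentally, and each must be separately verified to have a non-elliptic $X_1$-locus, using the explicit invariants computed in Section~\ref{sec:axis-aligned}.
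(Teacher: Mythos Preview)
The paper does not prove this statement at all: it is listed as Conjecture~4 in Section~\ref{sec:open-questions} (``Conjectures and Videos'') precisely because the authors regard it as open. There is therefore no ``paper's own proof'' to compare against.

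Your submission is also not a proof but a research outline. You yourself flag the central difficulty---the sidelengths $s_i$ introduce square roots that make $X_1(\lambda)$ algebraic rather than rational---and then list hoped-for workarounds (``hope that its vanishing alone already forces confocality'', ``specialize along one-parameter subfamilies''), none of which you carry out. The Fourier/Puiseux strategy is reasonable as a line of attack, but nothing here establishes that the vanishing of $c_2$ (or any finite collection of coefficients) cuts out exactly the confocal stratum, nor do you handle the branch-tracking or the exclusion of degenerate families. As written this is a plausible plan, not a proof, and the conjecture remains open by both the paper's account and yours.
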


\subsection*{Videos} Animations illustrating some phenomena herein are listed on Table~\ref{tab:playlist}.

\begin{table}[H]
\small
\begin{tabular}{|c|l|l|}
\hline
id & Title & \textbf{youtu.be/<.>}\\
\hline
01 & {Cayley-Poncelet Phenomena I: Basics} &
\href{https://youtu.be/virCpDtEvJU}{\texttt{virCpDtEvJU}}\\
02 & {Cayley-Poncelet Phenomena II: Intermediate} &
\href{https://youtu.be/4xsm\_hQU-dE}{\texttt{4xsm\_hQU-dE}}\\
03 & {3-Periodics in Non-Concentric, Unaligned Pair} &
\href{https://youtu.be/bjHpXVyXXVc}{\texttt{bjHpXVyXXVc}}\\
04 & {Loci of Centers I: Generic Pair} &
\href{https://youtu.be/p1medAei_As}{\texttt{p1medAei\_As}}\\
05 & {Loci of Centers II: Pair with Circumcircle} &
\href{https://youtu.be/HXgJQo2UT_8}{\texttt{HXgJQo2UT\_8}}\\
06 & {Loci of Centers III: Concentric Tilted Pair} &
\href{https://youtu.be/hpb7ZgKWjUY}{\texttt{hpb7ZgKWjUY}}\\
07 & {Loci of Centers IV: Outer Ellipse, Inner Non-Concentric Circle} &
\href{https://youtu.be/w7sZ5O8k4xU}{\texttt{w7sZ5O8k4xU}}\\
08 & {3-Periodics in Generic Pair + Affine Image w/ Circumcircle} &
\href{https://youtu.be/6xSFBLWIkTM}{\texttt{6xSFBLWIkTM}}\\
\hline
\end{tabular}
\caption{Videos of some focus-inversive phenomena. The last column is clickable and provides the YouTube code.}
\label{tab:playlist}
\end{table}

\section*{Acknowledgements}
\noindent We would like to thank A. Akopyan for valuable insights. The third author is fellow of CNPq and coordinator of Project PRONEX/ CNPq/ FAPEG 2017 10 26 7000 508.

\appendix

\section{Additional Invariant-Power Circles}
\label{app:four-more}
Consider the well-known circles derived from a reference triangle and their radii, listed on Table~\ref{tab:four-more}.

\begin{table}
\begin{tabular}{|l|l|l|l|}
\hline
Circle & Description (see \cite{mw}) & Center & Radius\\
\hline
Anticompl. & Circumc. of Anticompl. & $X_{4}$ & $2R$\\
Bevan & Circumc. of Excentral & $X_{40}$ & $2R$ \\
Spieker & Incircle of Medial & $X_{10}$ & $r/2$ \\
Mandart & Circumc. of Extouch & $X_{1158}$ & see \eqref{eqn:mandart-radius} \\
\hline
\end{tabular}
\caption{Definition of certain circles for which under certain families the power of the center is also invariant. $R,r$ denote circumradius and inradius.}
\label{tab:four-more}
\end{table}

Let $s$ denote the semiperimeter. The radius of the Mandart circle is given by \cite[Mandart Circle]{mw}:

\begin{equation}
R_m=\frac{s}{s_1 s_2 s_3}\sqrt{(4R^2-s_2 s_3)(4 R^2-s_3 s_1)(4R^2-s_1 s_2)}
\label{eqn:mandart-radius}
\end{equation}

Referring to Figure~\ref{fig:four-more}, for selected families, the power of the center is also invariant.

\begin{proposition}
Over the homothetic family, the power of the center with respect to the anticomplementary circle is given by:

\[ \P_{4,act}= {-(4/9)\sum{s_i}^2}\]
\end{proposition}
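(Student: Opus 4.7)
The plan is to reduce this to the previously established expression for $\P_3$ in the homothetic case by exploiting the collinearity of $X_2$, $X_3$, and $X_4$ on the Euler line. The anticomplementary circle is, by definition, centered at the orthocenter $X_4$ with radius $2R$, so by the definition of power,
\[ \P_{4,act}=|O-X_4|^2-(2R)^2. \]
In the homothetic family, $X_2=O$, so $|O-X_4|^2=|X_2-X_4|^2$, and this is the quantity I need to control.

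The key identity I would invoke is the Euler line relation $X_4=3X_2-2X_3$ (equivalently, $X_2$ divides the segment $X_3X_4$ in ratio $1{:}2$), which gives $X_4-X_2=2(X_2-X_3)$ and hence $|X_2-X_4|^2=4|X_2-X_3|^2$. Substituting,
\[ \P_{4,act}=4|X_2-X_3|^2-4R^2=4\bigl(|X_2-X_3|^2-R^2\bigr)=4\,\P_3. \]
Since the earlier proposition for the homothetic pair already established $\P_3=-\tfrac{1}{9}\sum s_i^2$, we obtain immediately
\[ \P_{4,act}=-\tfrac{4}{9}\textstyle\sum s_i^2, \]
which is the claimed formula.

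Invariance follows at once from the known fact (cited earlier in the homothetic section) that $\sum s_i^2=\tfrac{9}{2}(a^2+b^2)$ is conserved along the homothetic family; substituting also yields the closed form $\P_{4,act}=-2(a^2+b^2)$ if a purely $(a,b)$-expression is desired. There is no real obstacle here: the only nontrivial ingredients, namely the formula for $\P_3$ and the invariance of $\sum s_i^2$, have already been proved, and the rest is the one-line Euler-line reduction $\P_{4,act}=4\P_3$.
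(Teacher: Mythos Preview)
Your argument is correct. The reduction $\P_{4,act}=4\P_3$ via the Euler-line relation $X_4=3X_2-2X_3$ (hence $|X_2-X_4|^2=4|X_2-X_3|^2$) is valid, and combining it with the already-established $\P_3=-\tfrac{1}{9}\sum s_i^2$ for the homothetic pair gives the claimed formula immediately.

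The paper states this proposition in the appendix without an accompanying proof; its implicit method, in line with the rest of Section~\ref{sec:axis-aligned}, would be a direct symbolic computation of $|X_2-X_4|^2-4R^2$ in terms of the sidelengths. Your approach is genuinely more economical: rather than redoing any computation, you observe that the anticomplementary circle is obtained from the circumcircle by the dilation with center $X_2$ and ratio $-2$, so its power from $X_2$ is exactly $4$ times $\P_3$. This buys you the result in one line and, as you note, also yields the closed form $\P_{4,act}=-2(a^2+b^2)$ for free.
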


\begin{proposition}
Over the confocal family, the power of the center with respect to the Bevan, Spieker, and Mandart circles are given by:

\begin{align*}
    \P_{40,bev}=& -(a^2+b^2+2\delta)\\
    \P_{10,spi}=& -(1/64)(1-h^2)^2(a^2+b^2+2\delta)\\
    \P_{1158,man}=& -(1/48)(-h^4+14h^2+3)(a^2+b^2+2\delta)
\end{align*}
with $h = ({-a^2-b^2+2\delta})/c^2$.
\end{proposition}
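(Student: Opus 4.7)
The plan is to mirror the strategy already used for $\P_5$ in the confocal pair: express each power as a symmetric rational function of the sidelengths $s_1,s_2,s_3$, substitute the explicit billiard parametrization of those sidelengths by $(c_1,\rho,s)$, and verify that the $c_1$-dependence cancels. In all three computations the origin is $X_9$, the Mittenpunkt, which is stationary at $O$ for the confocal family.

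First I would handle each circle in turn. The Bevan circle is by definition the circumcircle of the excentral triangle, so $\P_{40,bev}$ over the billiard family coincides with $\P_3$ over the excentral family. The latter was already shown to equal $-a^2-b^2-2\delta$ in the excentral Proposition, so this case requires no new computation. For the Spieker circle I would use the known trilinears of $X_{10}$ together with the radius $r/2$ to write $\P_{10,spi}=|O-X_{10}|^2-r^2/4$ as a symmetric rational function of the $s_i$, placing $X_9$ at the origin via its trilinears $(s_2-s_3,\,s_3-s_1,\,s_1-s_2)$. For the Mandart circle I would do the same, using the trilinears of $X_{1158}$ from \cite{etc} together with the squared Mandart radius obtained from \eqref{eqn:mandart-radius}, which is polynomial in the $s_i$ once squared.

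Second, I would substitute the confocal parametrization
\[ s_1=\frac{2(1-c_1)s}{\rho-2c_1+2},\qquad s_{2,3}=\frac{(\rho c_1-c_1^2+\rho+1\mp w)\,s}{(1+c_1)(\rho-2c_1+2)}, \]
with $w^2=(1-c_1^2)(h^2-(c_1-\rho)^2)$ and $h^2=1-2\rho$, into each power. CAS simplification should collapse the result to a function of $s$ and $h$ only, confirming invariance over the family. Finally, using $a^2=4(1+h)s^2/[(3-h)(3+h)^2]$, $b^2=4(1-h)s^2/[(3+h)(3-h)^2]$, and the identity $a^2+b^2+2\delta=48s^2/(9-h^2)^2$ (already implicit in the excentral proof), I would rewrite each answer in the closed form stated in the proposition.

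The main obstacle will be the Mandart case: both the trilinears of $X_{1158}$ and the squared Mandart radius expand to bulky polynomials of high degree in the $s_i$, so verifying that all $c_1$-dependence cancels after substitution requires careful symbolic bookkeeping. Once cancellation is confirmed, however, matching the prefactors $(1-h^2)^2/64$ (Spieker) and $(-h^4+14h^2+3)/48$ (Mandart) against the common factor $48s^2/(9-h^2)^2$ reduces to a short polynomial identity in $h$ and $s$.
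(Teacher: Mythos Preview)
The paper states this proposition in the appendix without proof, so there is no argument to compare against directly. Your plan follows exactly the template the paper uses for $\P_3$ and $\P_5$ in the confocal case (symmetric rational function of the $s_i$, then substitute the $(c_1,\rho,s)$ parametrization, then convert via the $a^2,b^2,h,s$ identities), and the paper itself already remarks that $\P_{40,bev}$ coincides with $\P_3$ for the excentral family, so that first case is indeed free.

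One slip to fix: the trilinears you wrote for $X_9$, namely $(s_2-s_3:s_3-s_1:s_1-s_2)$, are not those of the Mittenpunkt; in fact their barycentric counterpart sums to zero, so that triple represents a point at infinity. The correct trilinears are $s_2+s_3-s_1:s_3+s_1-s_2:s_1+s_2-s_3$. With that corrected, the distance formulas $|X_9-X_{10}|^2$ and $|X_9-X_{1158}|^2$ become the intended symmetric rational functions of the $s_i$, and the rest of your outline (CAS cancellation of $c_1$, then matching against $48s^2/(9-h^2)^2=a^2+b^2+2\delta$) goes through as you describe.
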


\begin{figure}
 \centering
 \includegraphics[width=.8\textwidth]{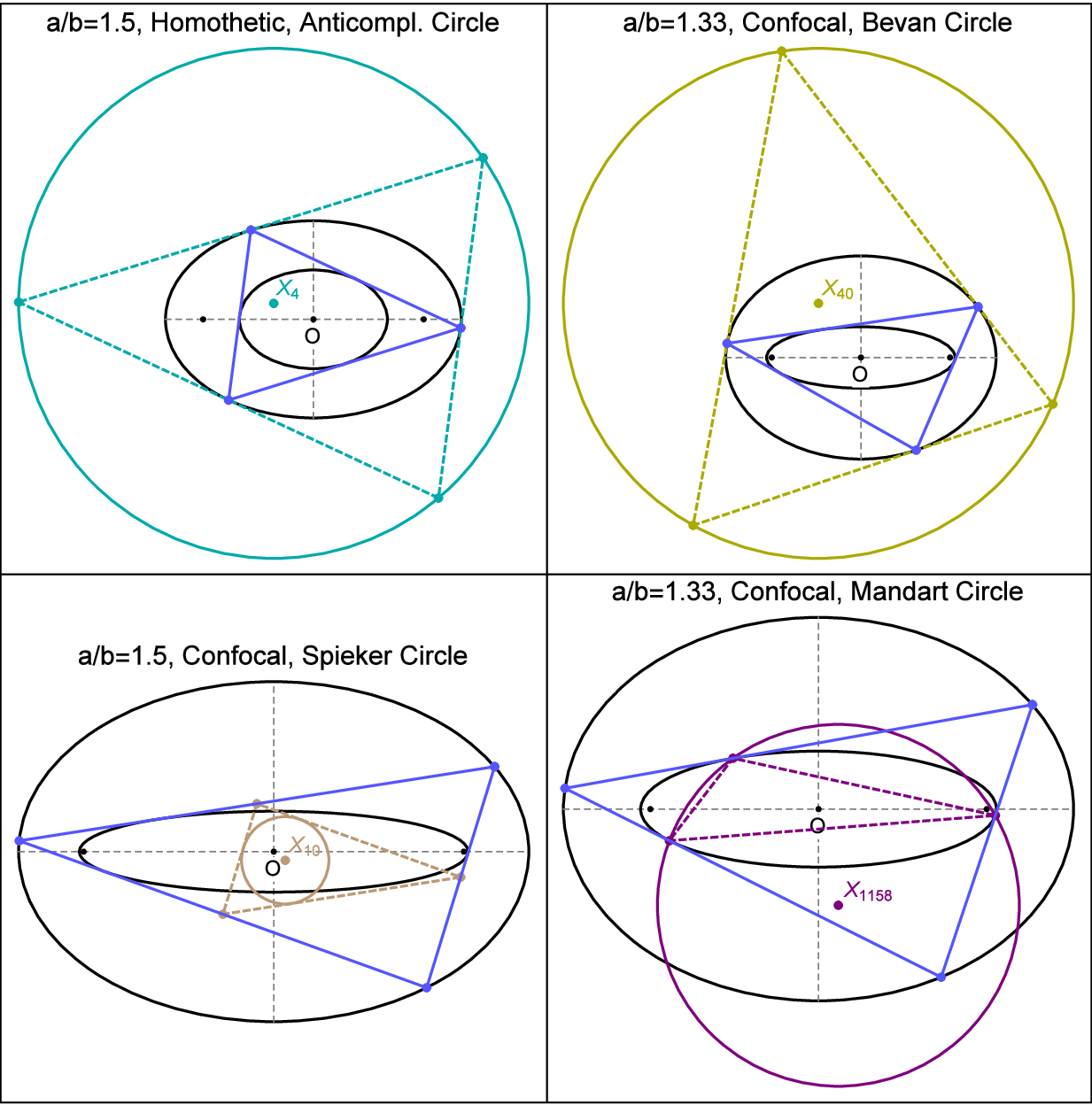}
 \caption{Four additional circles (Anticomplementary, Bevan, Spieker, Mandart) with respect to which the power of the center is invariant, for the particular families indicated (homothetic, and thrice confocal, respectively). See these in motion at: \href{https://bit.ly/3qiTsgY}{\texttt{bit.ly/3qiTsgY}}, \href{https://bit.ly/3jPVvqf}{\texttt{bit.ly/3jPVvqf}}, \href{https://bit.ly/3bhOVFp}{\texttt{bit.ly/3bhOVFp}}, \href{https://bit.ly/3poGBbW}{\texttt{bit.ly/3poGBbW}}, respectively.}
 \label{fig:four-more}
\end{figure}

Additional circles and families show on Table~\ref{tab:addtl-circles} have been detected experimentally, with respect to which the center has constant power. We challenge the reader to derive them.

\begin{table}
\begin{tabular}{|l|l|l|l|}
\hline
Family & Triangle & Circle & Animation \\
\hline
Confocal & Extouch & Euler's & \href{https://bit.ly/3phwBkz}{\texttt{bit.ly/3phwBkz}} \\
Incircle & Intouch & Euler's & \href{https://bit.ly/2ZapHD0}{\texttt{bit.ly/2ZapHD0}} \\
Homothetic & Medial & Euler's & \href{https://bit.ly/3rTNQdc}{\texttt{bit.ly/3rTNQdc}} \\
Circumcircle & Euler's$^\ddagger$ & Euler's & \href{https://bit.ly/3qjytdY}{\texttt{bit.ly/3qjytdY}} \\
Dual$^\dagger$ & Euler's & Euler's & \href{https://bit.ly/2Nlx7AS}{\texttt{bit.ly/2Nlx7AS}}\\
Dual & Anticompl. & Circumc. & \href{https://bit.ly/3ai7BWg}{\texttt{bit.ly/3ai7BWg}}\\
Circumcircle & Orthic & Incircle & \href{https://bit.ly/3qhjhy0}{\texttt{bit.ly/3qhjhy0}} \\
\hline
\end{tabular}
\caption{Experimentally, the power of the center wrt certain additional family-circle combinations is also invariant. $^\ddagger$Euler's Triangle \cite{mw} has vertices at the midpoints of lines from the orthocenter $X_4$ to the vertices (they lie on Euler's circle).}
\label{tab:addtl-circles}
\end{table}

All of the examples in this section can be viewed in motion in \href{https://bit.ly/37dr1JJ}{bit.ly/37dr1JJ}.

\section{Table of Symbols}
\label{app:symbols}
\begin{table}[H]
\small
\begin{tabular}{|c|l|}
\hline
symbol & meaning \\
\hline
$\E,\E_c$ & outer and inner ellipses \\
$O,O_c$ & centers of $\E$,$\E_c$\\
$a,b,a_c,b_c$ & outer and inner ellipse semi-axes' lengths \\
$c,c_c$ & half-focal length of $\E,\E_c$ \\ 
 $O_c=(x_c,y_c)$\\
$\theta$ & major semi-axis tilt $\E_c$ wrt $\E$ \\
$P_i,s_i$ & 3-periodic vertices and sidelengths \\
$r,R$ & 3-periodic inradius and circumradius \\
$a_i,b_i$ & semiaxes of the locus of $X_i$ \\
$r_i$ & radius of the locus of $X_i$ (if $a_i=b_i$) \\
\hline
$\C_3,\C_5$ & circum- and Euler's circle \\
$\C_2,\C_{381}$ & Steiner orthoptic and orthocentroidal circle \\
$\C_4,\C_{26}$ & polar and tangential circle \\
$\P_i$ & power of center $O$ wrt $\C_i$ \\
\hline
$X_1,X_2,X_3$ & Incenter, Barycenter, Circumcenter \\
$X_4,X_5,X_6$ & Orthocenter, Euler center, Symmedian point \\
$X_9,X_{20}$ & Mittenpunkt, de Longchamps point\\
\hline
\end{tabular}
\caption{Symbols used in the article.}
\label{tab:symbols}
\end{table}

\bibliographystyle{maa}
\bibliography{references,authors_rgk_v3}

\end{document}